\newcommand{\blue}[1]{{\color{black}#1}}
\newtheorem{theorem}{Theorem}
\newtheorem{lemma}{Lemma}
\newtheorem{proposition}[lemma]{Proposition}
\newtheorem{corollary}[lemma]{Corollary}
\newtheorem{definition}[lemma]{Definition}
\newtheorem{remark}[lemma]{Remark}
\newtheorem{conjecture}{Conjecture}
\numberwithin{lemma}{section}
\numberwithin{equation}{section}
\newcommand{\R}{{\mathbb R}}
\newcommand{\C}{{\mathbb C}}
\newcommand{\N}{{\mathbb N}}
\renewcommand{\R}{\mathbb R}
\newcommand{\bM}{\mathbf M}
\newcommand{\bP}{\mathbf P}
\newcommand{\bR}{\mathbf R}
\newcommand{\bI}{\mathbf I}
\newcommand{\bJ}{\mathbf J}
\newcommand{\bK}{\mathbf K}
\newcommand{\bu}{{\bar u}}
\newcommand{\bv}{{\bar v}}
\newcommand{\bfu}{{\mathbf u}}
\newcommand{\Ns}{N^\sharp}
\renewcommand{\S}{{\mathbf S}}
\newcommand{\step}{2}
\newcommand{\la}{\langle}
\newcommand{\ra}{\rangle}
\newcommand{\ol}{\overline}
\newcommand{\ms}{M^\sharp}
\newcommand{\ps}{P^\sharp}
\newcommand{\calR}{\mathcal{R}}
\begin{document}

\title{Global solutions for cubic quasilinear Schr\"odinger flows in two and higher dimensions}

\author{Mihaela Ifrim}
\address{Department of Mathematics, University of Wisconsin, Madison}
\email{ifrim@wisc.edu}

\author{ Daniel Tataru}
\address{Department of Mathematics, University of California at Berkeley}
\email{tataru@math.berkeley.edu}

\begin{abstract}

In recent work the authors proposed a 
broad global well-posedness conjecture
for cubic defocusing dispersive equations in one space dimension, and then proved 
this conjecture in two cases, namely for
one dimensional semilinear and quasilinear Schr\"odinger flows. 

Inspired by the circle of ideas developed 
in the proof of the above conjecture, in this paper we expand the reach of these 
methods to higher dimensional quasilinear cubic Schr\"odinger flows. The study of this class of problems, in all dimensions, was initiated  in pioneering work of Kenig-Ponce-Vega for localized initial data, and then continued by Marzuola-Metcalfe-Tataru (MMT) for initial data in Sobolev spaces. 

The outcomes of this work are (i) a new, potentially sharp  local well-posedness result
in low regularity Sobolev spaces, one 
derivative below MMT and just one half derivative above scaling, (ii) a small data global well-posedness and scattering result at the same regularity level, the first result of its kind at least in two 
space dimensions, and (iii) a new way to think about this class of problems, which, we believe, will  become the standard approach in the future.

\bigskip

\end{abstract}

\subjclass{Primary:  	35Q55   %	NLS equations 
Secondary: 35B40   %	Asymptotic behavior of solutions to PDEs	
}
\keywords{NLS problems, quasilinear, defocusing, scattering, interaction Morawetz}

\maketitle

\setcounter{tocdepth}{1}
\tableofcontents

%%%%%%%%%%%%%%%%%%%%%%%%%%%%%%%%%%%%%%%%%%%%%%%%%%%%%%%%%%%%%%%%%%%%%%%%%%%%%%%%%%%%%%%%%%%%%%%%%%%%%%%%%%%%%%%%%%%%%%%%%
%%%%%%%%%%%%%%%%%%%%%%%%%%%%%%%%%%%%%%%%%%%%%%%%%%%%%%%%%%%%%%%%%%%%%%%%%%%%%%%%%%%%%%%%%%%%%%%%%%%%%%%%%%%%%%%%%%%%%%%%%

\section{Introduction}

This article is devoted to the study of both local and global well-posedness for quasilinear cubic Schr\"odinger flows in two and higher space dimensions, following the authors' recent work in \cite{IT-qnls} on  the similar problem in one space dimension. 

 The starting point for the work in \cite{IT-qnls} was in turn the \emph{global well-posedness conjecture for 1D cubic defocusing dispersive flows}, also introduced by the authors in \cite{IT-global} a year earlier. This conjecture was first proved in \cite{IT-global} for semilinear Schr\"odinger flows, and then in \cite{IT-qnls} for quasilinear Schr\"odinger flows. Further, the approach in \cite{IT-qnls} also led to a major improvement to the local 
well-posedness theory, up to the optimal threshold.

The cubic higher dimensional problem we consider here has received considerable interest over the years, beginning with the first local well-posedness results of Kenig-Ponce-Vega~\cite{KPV}, for regular and localized initial data, which was expanded to a full well-posedness theory  for small data in lower regularity Sobolev spaces  by Marzuola-Metcalfe-Tataru~\cite{MMT2}, and later for large data in \cite{MMT3}.
One key element of the latter papers was the introduction of the local energy spaces. This was instrumental in allowing the study of the problem for nonlocalized
data, i.e., in standard translation invariant Sobolev spaces. 

On the other hand, the approach introduced  in \cite{IT-qnls} for the one dimensional 
problem departs radically from \cite{MMT2},
and is instead based on 
bilinear estimates via an interaction Morawetz analysis, and normal forms ideas
implemented via modified energies. Our aim here is to extend the reach of these ideas
to higher dimensional problems, leading 
to a drastic expansion of both local 
and global results. Precisely, our goals in this paper are fourfold:
\begin{enumerate}[label=\roman*)]
    \item to produce a novel, better approach for the study of both local and global dynamics in higher dimensional problems. 
    \item to propose a higher dimensional version of our global well-posedness
    conjecture, particularly  in the 
    most interesting case of two-dimensional problems.
    \item to sharply improve the local well-posedness theory for cubic quasilinear Schr\"odinger flows
    in higher dimension, one full
derivative below the earlier results of \cite{MMT2} and only a half derivative above scaling, which may well be optimal  depending on the dimension.
    \item to prove the global well-posedness conjecture for cubic quasilinear Schr\"odinger flows with nonlocalized data, at  the same Sobolev regularity as the local well-posedness result. This is 
    the first result of its kind in the most interesting case of 2D quasilinear flows.
    
\end{enumerate}

\subsection{ The global well-posedness conjectures}
As noted earlier, the present work is motivated in part by a broad 
conjecture formulated by the authors in a recent paper \cite{IT-global}. Our global well-posedness (GWP) conjecture, which applies to both semilinear and quasilinear 1D problems, is as follows:

\begin{conjecture}[Non-localized data defocusing GWP conjecture]\label{c:nld}
One dimensional dispersive flows on the real line with cubic defocusing nonlinearities and small initial data have global in time, scattering solutions.
\end{conjecture}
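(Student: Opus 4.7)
The plan follows the template established by the authors for the two already-settled cases \cite{IT-global,IT-qnls}: combine an interaction Morawetz analysis, whose bulk term crucially exploits the defocusing cubic sign, with a modified energy built from a normal form correction that cancels the non-resonant cubic interactions. Together these must close a bootstrap for a Sobolev norm of scaling-critical character on the maximal interval of existence; scattering then follows from the time-integrability of the bilinear quantities produced by the Morawetz bound.

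Concretely, for each particular cubic defocusing one dimensional dispersive flow one first identifies the scaling-critical Sobolev index $s_c$ and develops a short-time well-posedness theory in $H^s$ for $s$ slightly above $s_c$, using the Strichartz / local smoothing / maximal function package adapted to the linear dispersion at hand. In the quasilinear regime the equation must first be paradifferentially renormalized so that the dispersion is frozen at the principal symbol level and the energy method becomes available without derivative loss. One then constructs a modified energy of the form $E_s(u) = \|u\|_{H^s}^2 + Q(u,\bar u,u,\bar u) + \cdots$, whose quartic correction $Q$ is chosen to cancel all non-resonant cubic contributions to $\frac{d}{dt}E_s(u)$; the remaining resonant contribution carries a favourable sign because the equation is defocusing, and the higher-order remainder is quintic and perturbative.

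The third and most substantive step is an interaction Morawetz identity applied to the tensor $u(t,x)u(t,y)$, with interaction weight tuned to the group velocity of the linear symbol. In one space dimension one expects, after integration in time, a bilinear smoothing bound of the schematic form
\[ \big\||D|^{\alpha}(|u|^{2})\big\|_{L^2_{t,x}} \lesssim \|u_0\|_{H^{s_c}}^{2}, \]
where $\alpha$ depends on the order of the dispersion and the positivity of the bulk again uses the defocusing sign. This bilinear bound simultaneously absorbs the error terms in the propagation of $E_s$ and provides the integrable-in-time decay of the cubic nonlinearity required to produce a scattering state.

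The principal obstacle, and the reason the conjecture remains open in its stated generality, is that both the Morawetz bulk and the normal form denominators depend sensitively on the symbol of the linear part: one needs the bulk to be positive and large enough to dominate the boundary and commutator errors, and one needs the cubic resonance function to be non-degenerate so that the normal form transformation is bounded in the chosen function space. In the quasilinear regime one faces the further difficulty, already present and resolved in \cite{IT-qnls}, that the Morawetz analysis must be implemented on a paradifferentially renormalized unknown and the resulting commutator errors reabsorbed into the modified energy in a self-consistent way. Dispersion relations that degenerate at nontrivial frequencies would demand an additional dyadic decomposition and refined bilinear estimates adapted to that degeneracy, and for non-Schr\"odinger flows the resonance geometry may force a substantially different choice of Morawetz weight.
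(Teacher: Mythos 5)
There is no proof to compare against here: the statement you were asked about is a \emph{conjecture} in the paper (Conjecture~\ref{c:nld}), stated for \emph{all} one dimensional dispersive flows with cubic defocusing nonlinearities, and the paper does not prove it. It records only that the conjecture has been verified in two particular instances, the semilinear 1D Schr\"odinger flows in \cite{IT-global} and the quasilinear 1D Schr\"odinger flows in \cite{IT-qnls}, and the present article moves on to higher dimensional analogues. Your text, by its own admission (``the reason the conjecture remains open in its stated generality''), is a program rather than a proof: it reproduces the template of the two settled cases --- paradifferential renormalization, modified energies built from normal form corrections, and an interaction Morawetz identity whose bulk uses the defocusing sign --- but it does not carry out any of these steps for a general dispersion relation, and it explicitly leaves unresolved exactly the points on which generality fails (positivity and coercivity of the Morawetz bulk for a general symbol, non-degeneracy of the cubic resonance function so that the normal form is bounded, degenerate dispersion at nonzero frequencies, and the reabsorption of commutator errors in the quasilinear setting).

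So the concrete gap is that nothing is proved: the schematic bilinear smoothing bound $\| |D|^{\alpha}(|u|^2)\|_{L^2_{t,x}} \lesssim \|u_0\|_{H^{s_c}}^2$ is asserted ``after integration in time'' without any derivation for a general linear symbol, the sign of the resonant contribution to the modified energy is claimed to be favourable without specifying what ``defocusing'' even means for a general flow (the paper itself only defines the defocusing condition through the cubic symbol $c(\xi,\xi,\xi)\gtrsim 1+\xi^2$ in the Schr\"odinger setting), and no bootstrap is actually closed. If your goal was to summarize the strategy that succeeded in \cite{IT-global} and \cite{IT-qnls}, the summary is broadly faithful to those papers' circle of ideas; but as a proof of the conjecture as stated it does not succeed, and indeed cannot be evaluated against a paper proof because the paper offers none --- the statement remains open.
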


The main result of \cite{IT-global} asserts that this conjecture is true first in the semilinear 1D Schr\"odinger setting. This was the first global in time well-posedness result of its type. 

Next, in \cite{IT-qnls}, the authors established the above conjecture 
for 1D quasilinear Schr\"{o}dinger flows, which represented the first validation of the conjecture in a quasilinear setting. Precisely, we proved  that if the problem has \emph{phase rotation symmetry} and is \emph{conservative}  and \emph{defocusing}, then  small data in Sobolev spaces yields global, scattering solutions.   Notably, scattering here was interpreted in a weak sense, to mean that the solution satisfies global $L^6_{t,x}$ Strichartz estimates and bilinear $L^2_{t,x}$ bounds. This is due to the strong nonlinear effects that preclude any classical scattering for the problem.

The defocusing condition for the nonlinearity is essential for the global result in 1D. In the focusing case, the existence of small solitons generally prevents global, scattering solutions. Nevertheless, in another recent paper the authors have conjectured that long-time solutions can be  obtained on a potentially optimal time scale:

\begin{conjecture}[Non-localized data long-time well-posedness conjecture]\label{c:nld-focusing}
One-dimensional \\ dispersive flows on the real line with cubic conservative nonlinearities and  initial data of size $\epsilon \ll 1$ have  
long time solutions on the $\epsilon^{-8}$ time scale.
\end{conjecture}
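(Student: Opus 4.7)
The plan is to establish long time existence on the $\epsilon^{-8}$ scale via a multi-step modified energy / normal form scheme, combined with interaction Morawetz bilinear bounds, adapted from the defocusing approach of \cite{IT-qnls}. Heuristically, the time scale $\epsilon^{-8}$ arises from iterating the normal form machinery three times past the trivial $\epsilon^{-2}$ cubic scale, i.e.\ from reducing the effective nonlinearity to nonic order modulo resonant contributions.

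First, I would rewrite the cubic conservative flow in a density-flux form and isolate the resonant part of the cubic nonlinearity. For phase-rotation-invariant nonlinearities this is the gauge-invariant ``diagonal'' cubic $|u|^2 u$ together with its derivative counterparts; these cannot be removed by any normal form and must be handled instead via a phase / gauge transformation built from the conservation of mass (and, at the local level, from a mass density). Successive normal form transformations would then eliminate the remaining non-resonant cubic, quintic and septic contributions modulo their resonant parts. For each new resonant part one must show, using phase rotation and momentum conservation, that it contributes only a phase shift rather than a genuine amplitude change on the relevant time scale.

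In parallel, I would run an interaction Morawetz analysis to produce bilinear $L^2_{t,x}$ estimates on frequency-separated pairs, of size $\epsilon^2$ on any fixed time interval; the crucial point is that in the conservative (but not necessarily defocusing) setting one still expects a sign-favorable Morawetz identity after the gauge / phase correction, as the ``bad'' term was precisely the resonant diagonal cubic that was just removed. These bilinear bounds would then feed into a modified energy of the form
\[
E_{mod}(u) \;=\; \|u\|_{H^s}^2 + Q^{(4)}(u) + Q^{(6)}(u) + Q^{(8)}(u),
\]
where $Q^{(k)}$ are polynomial corrections chosen so that the non-resonant cubic, quintic and septic contributions to $\tfrac{d}{dt} E_{mod}$ cancel. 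After these cancellations the remainder is effectively of nonic size; combining it with two factors of the Morawetz bilinear bound reduces the time integrand to one of total size $\epsilon^{10}$. Integrated over $[0,\epsilon^{-8}]$, this yields a Sobolev norm change of size at most $\epsilon^2$, which closes a bootstrap.

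The main obstacle will be the management of the resonant terms at each successive order. In the focusing case there is no sign condition to suppress them, and on $\R$ the higher-order resonant manifolds are genuinely larger than in higher dimensions, so one cannot assume that ``resonant'' means ``pure phase'' beyond the leading order. At each normal form step one must verify that the surviving resonant interaction is either a pure phase, a conserved density, or an interaction that vanishes on the diagonal in a suitably quantified way. A secondary difficulty is the unboundedness of the normal form substitutions at low Sobolev regularity --- as in \cite{IT-qnls}, this forces the work to be done with modified energies rather than with the transformed variable itself; the quasilinear character of the flow further means the corrections $Q^{(k)}$ carry derivatives, so one must verify that they remain of the expected size on data of size $\epsilon$ and do not impose more regularity than the $H^s$ framework already provides.
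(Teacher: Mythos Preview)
The statement you are attempting to prove is a \emph{conjecture}, not a theorem: the paper states Conjecture~\ref{c:nld-focusing} as an open problem formulated in the companion work \cite{IT-focusing}, and offers no proof of it. The present paper is concerned with higher-dimensional quasilinear Schr\"odinger flows (Theorems~\ref{t:local2}--\ref{t:global2c}), and the one-dimensional focusing conjecture appears only as background motivation in the introduction. There is therefore no proof in the paper against which to compare your proposal.

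That said, your sketch is in the right spirit and broadly consistent with the heuristics behind the conjecture as discussed in \cite{IT-focusing} and \cite{IT-conjecture}: iterated normal form corrections to push the effective nonlinearity to higher order, combined with interaction Morawetz bilinear bounds, is indeed the framework in which the $\epsilon^{-8}$ lifespan is expected to emerge. However, your proposal glosses over the central difficulty. You write that ``in the conservative (but not necessarily defocusing) setting one still expects a sign-favorable Morawetz identity after the gauge / phase correction'' --- but this is precisely what fails in the focusing case. The defocusing assumption in \cite{IT-qnls} is what guarantees the coercivity of the interaction Morawetz functional globally in time; without it, small solitons exist and the Morawetz identity cannot close a global bootstrap. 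Your scheme does not explain how the bilinear $L^2$ bounds would be obtained uniformly on the $\epsilon^{-8}$ interval absent that sign, nor how the resonant contributions at quintic and septic order are controlled beyond a vague appeal to ``phase shift rather than amplitude change.'' These are exactly the obstructions that make this a conjecture rather than a theorem.
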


The conservative assumption here is very natural. It is heuristically aimed at preventing nonlinear ode blowup of solutions with wave packet localization.
\medskip

In this article we are interested in the 
counterpart of this conjecture in two and higher space dimensions. Compared to our earlier one-dimensional work, one difference is that here we have more dispersion, which is reflected in the fact that the $L^4_{t,x}$ space-time norm of the solutions, rather than the $L^6_{t,x}$ one,
plays the leading role.
 In particular, in the semilinear case  the small data problem can be approached 
directly using Strichartz estimates, and global well-posedness and scattering follow. The focusing/defocusing character of  the problem only plays a role in the large data case, see e.g., Dodson's results in \cite{D,D-focus}.

Thus, the interesting class of open problems remains the quasilinear one, for which Strichartz estimates are no longer readily available.  For such problems, we formulate here the following conjecture:

\begin{conjecture}[2D non-localized data  GWP conjecture]\label{c:nld2}
Two and higher dimensional quasilinear dispersive flows with cubic  nonlinearities and small initial data have global in time, scattering solutions.
\end{conjecture}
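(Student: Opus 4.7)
The plan is to follow the blueprint developed by the authors in \cite{IT-qnls} for the one dimensional problem, but adapted to exploit the stronger dispersion available in dimensions $n \geq 2$. Working at the Sobolev threshold $s$ that sits one half derivative above scaling (the regularity level identified in points (iii)--(iv) of the introduction), I would reduce the conjecture, via the local theory, to two global a priori estimates for small data solutions: (a) an energy bound $\|u(t)\|_{H^s} \lesssim \|u(0)\|_{H^s}$, and (b) a global dispersive bound, an $L^4_{t,x}$-type Strichartz together with bilinear $L^2$ control, which would play the role of the weak scattering norm. Throughout, the analysis is restricted to the quasilinear Schr\"odinger case of the conjecture, since the semilinear case in $n \geq 2$ is essentially Strichartz-driven.

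The first main ingredient is a modified energy $E^\sharp_s(u)$ at the $H^s$ level, obtained by a paradifferential normal form transformation from $\|u\|_{H^s}^2$. This modified energy differs from the naive one by a quartic correction, engineered so that its time derivative along the flow cancels both the quasilinear principal part contributions and the non-perturbative cubic resonant interactions that obstruct any direct energy estimate. The conservative and phase rotation symmetries of the problem are essential to produce these cancellations. The construction parallels \cite{IT-qnls}, but here the paradifferential calculus takes place on a multi-dimensional phase space, so frequency localizations occur over balls rather than intervals. A small data norm equivalence $E^\sharp_s(u) \approx \|u\|_{H^s}^2$ must be established at this low regularity, which forces the use of paradifferential rather than purely bilinear normal forms in order to avoid derivative losses.

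The second main ingredient is an interaction Morawetz identity tailored to the quasilinear flow, producing a global bilinear bound of the form
\[
\|u(t,x)\,\bar u(t,x-y)\|_{L^2_{t,x}}^2 \lesssim \|u\|_{\dot H^{-(n-1)/2}}^4,
\]
uniform in $y \in \R^n$. This is the higher dimensional counterpart of the $L^6$ bound used in \cite{IT-qnls}. Combining this bilinear information with a Littlewood--Paley decomposition and a $TT^*$ argument would yield $L^4_{t,x}$-type control at the target Sobolev regularity, which serves as the weak scattering norm. A bootstrap then alternates between the modified energy estimate and these dispersive bounds and closes for small initial data.

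The principal obstacle, I expect, is that the interaction Morawetz computation in a genuinely quasilinear setting is considerably more delicate than its semilinear counterpart: the natural mass and momentum densities depend on the metric coefficients, which themselves depend on $u$, so positivity of the Morawetz output must be established modulo perturbative errors that account for the metric corrections. One has to carefully track how the metric perturbation enters the Morawetz flux and verify that the resulting error terms remain controllable at the target regularity. It is precisely because we work only half a derivative above scaling — rather than a full derivative, as in \cite{MMT2} — that the Morawetz and bilinear information becomes indispensable, while the older strategy based on local energy and smoothing alone is no longer by itself sufficient. A secondary technical point will be that in two space dimensions the Morawetz weight is only logarithmic, so the bilinear output must be combined with frequency envelope bookkeeping to produce clean $L^4$ bounds.
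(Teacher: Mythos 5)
Your overall scaffolding — frequency envelopes, a bootstrap coupling energy bounds with interaction Morawetz bilinear $L^2$ bounds, and a paradifferential treatment of the quasilinear part — does match the architecture of the paper's proof of this conjecture for Schr\"odinger flows. The genuine gap is your central energy mechanism. You propose a modified energy $E^\sharp_s(u)$ built from a (para)normal form whose quartic correction cancels the resonant cubic interactions, and you declare phase rotation symmetry and the conservative condition "essential" for this. That is the 1D strategy of \cite{IT-qnls}, and it does not survive the passage to $n\geq 2$: the resonant set $\{\Delta^4\xi=0,\ \Delta^4\xi^2=0\}$ now consists of all frequency rectangles rather than the trivial pairings, so the doubly resonant/balanced interactions are far too numerous to be removed by a quartic correction — the paper states explicitly that this "mostly disables normal form/modified energy type arguments." The actual route is the opposite of yours: in $n\geq 3$ (and in 2D at $s\geq 7/4$) the balanced cubic terms are shown to be \emph{perturbative}, absorbed into the source $N_\lambda$ of the paradifferential flow and estimated in $L^1$ against translates of $u_\lambda$ using the unbalanced/balanced bilinear $L^2$ bounds and $L^4$-type Strichartz bounds, with no phase rotation, conservative, or defocusing hypotheses at all; structure enters only at the 2D low-regularity endpoint, and even there the conservative condition gives merely first-order vanishing of the quartic symbol on the diagonal, which is exploited not by a normal form removal but by quartic density and flux corrections to the mass/momentum density-flux identities, with the remainder split into a flux term and an $L^1$ error. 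So your plan both proves a weaker statement than the conjecture (by imposing symmetry hypotheses the result does not need) and, more importantly, hinges on a cancellation that is not available in dimension two and higher.

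A second, more technical problem is your passage from the bilinear Morawetz output to $L^4_{t,x}$ "via $TT^*$." In $n\geq 3$ the balanced bound on $|D|^{\frac{3-n}2}|u|^2$ does yield lossless $L^4$, but in $n=2$ it yields control of $|D|^{1/2}|u|^2$ in $L^2$, i.e. $L^4_tL^8_x$-type information, and $L^4_{t,x}$ is a \emph{sharp} Strichartz norm in 2D which cannot be extracted from the interaction Morawetz analysis; the paper has to prove a separate global-in-time Strichartz estimate with derivative loss for the paradifferential flow using $U^2_\Delta/V^2_\Delta$ spaces, and then, at low regularity, compensate for that loss through the conservative density-flux mechanism above. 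Also, your remark that the 2D Morawetz weight is "only logarithmic" is off target: the weight $a(x)=|x|$ is used in every dimension, and the 2D obstruction is the sharpness of $L^4$, not the choice of weight.
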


\blue{To be clear, this is for evolutions in $\R^n$, $n \geq 2$, and applies to flows which are 
invariant with respect to translations.} Here scattering is interpreted in the classical fashion, with the only caveat that, due to the nature of quasilinear  
well-posedness, the scattering state can only be expected to depend continuously in the initial data in the strong  topology.

While for convenience we have stated this 
conjecture in all dimensions, the most interesting case by far is the two-dimensional case, where, at this point, no small data global well-posedness result exists. This is in part due to the fact that in two space dimensions $L^4_{t,x}$ is a sharp Strichartz norm, whereas in higher  dimensions there is some room. Indeed, 
in dimension three and higher global well-posedness has been proved for very specific models, using a combination of energy estimates and constant coefficient Strichartz estimates, properly interpolated, see e.g., \cite{HLT}.

In this article we will prove this conjecture for all cubic quasilinear Schr\"odinger flows.  In dimension three and higher, and even in dimension two at higher regularity, no additional structural assumptions are required. However, in order to reach the 
local well-posedness Sobolev threshold in dimension two, we require the flow to be conservative.

\subsection{ Quasilinear Schr\"odinger flows: local solutions}

The most general form for a  quasilinear Schr\"odinger flow  in $n$ space dimensions is
\begin{equation}\tag{DQNLS}
\label{dqnls}
\left\{ \begin{array}{l}
i u_t + g^{jk}(u,\partial_x u ) \partial_j \partial_k  u = 
N(u,\partial_x u) , \quad u:
\R \times \R^n \to \C \\ \\
u(0,x) = u_0 (x),
\end{array} \right. 
\end{equation}
with a metric $g_{jk}$ which is a real valued symmetric matrix and a complex valued source term $N$, both of which are smooth functions of their arguments. 
Here smoothness is interpreted in the real sense. 
But if $g$ and $N$ are analytic, then they can also be thought of as separately (complex) analytic functions of $u$ and $\bu$. This is the interpretation we use in the present paper.

In parallel we consider the seemingly simpler problem
\begin{equation}\tag{QNLS}
\label{qnls}
\left\{ \begin{array}{l}
i u_t + g^{jk}(u) \partial_j \partial_k u = 
N(u,\partial_x u) , \quad u:
\R \times \R \to \C \\ \\
u(0,x) = u_0 (x),
\end{array} \right. 
\end{equation}
where $N$ is  at most quadratic in $\partial u$. This can be seen as the differentiated form of \eqref{dqnls}. We will state our results for both
of these flows, but, in order to keep the exposition focused on the important issues, we will provide complete arguments only for \eqref{qnls}.

In the present work we make the key assumption that 
\begin{equation}\label{elliptic}
g(0) = I_n,    
\end{equation}
which can be easily generalized to any positive definite matrix. But there has also been considerable interest in the 
ultra-hyperbolic case, where $g(0)$ is a symmetric nondegenerate matrix of indefinite case.

The first question we consider here is the local well-posedness question in Sobolev spaces.
This is not at all a straightforward question,
and is in particular it is  much harder than the corresponding question for nonlinear wave equations. One primary reason for this is the infinite speed of propagation, which introduces  short-term growth effects  that  are only visible on long time scales for hyperbolic problems. 
This goes back to the work of Takeuchi~\cite{MR1191488}, Mizohata~\cite{MR0860041} and Ichinose~\cite{MR0948533}.

One effect of this is that in problems with quadratic nonlinearities $N$, one generally does not have local well-posedness in any $H^s$ space,
at least not without making additional decay assumptions at infinity. This happens
even for semilinear equations where the metric $g$ is simply the Euclidean one. 

Historically, the first problems of this type that were studied were the semilinear ones, starting with the work of Kenig-Ponce-Vega~\cite{MR1230709} for small data, Hayashi-Ozawa~\cite{MR1255899} and then Chihara~\cite{MR1344627} for large data, also using ideas of Doi~\cite{MR1284428}.
Finally, Kenig-Ponce-Vega~\cite{MR1660933} also considered the ultra-hyperbolic case, where the Laplacian is replaced by a nondegenerate second-order operator with indefinite signature. All the above results are for localized data, and the first result in translation invariant Sobolev spaces was due to Bejenaru-Tataru~\cite{MR2443925}.

Moving on to the full quasilinear case,
the first problems that were considered there were the one-dimensional ones, 
for which we refer the reader to the work 
of de Bouard-Hayashi-Naumkin-Saut~\cite{BNPS}, Colin~\cite{Co}, Poppenberg~\cite{Pop} and Lin-Ponce~\cite{Lin-Ponce}, leading up to the authors' recent article \cite{IT-qnls}.

The first local well-posedness result for the quasilinear problem in higher dimension goes back to work of Kenig-Ponce-Vega~\cite{KPV}, where
these equations were studied for  regular and localized initial data. Similar results in the ultra-hyperbolic case were then obtained by Kenig-Ponce-Vega-Rovlung \cite{KPRV1,KPRV2}. The next step was 
to study the local well-posedness question in  translation invariant Sobolev spaces
and at lower regularity; this was accomplished 
by Marzuola-Metcalfe-Tataru, first for small data  in \cite{MMT1,MMT2}, and then for large data in \cite{MMT3}; the latter 
article considers only the elliptic case,
while the ultra-hyperbolic case was only recently studied by Pineau-Taylor~\cite{2023arXiv231019221P},
also related to the work of Jeong-Oh~\cite{2024arXiv240206278J}.
Another very recent work of 
Shao-Zhou~\cite{2023arXiv231102556S} provides a nice alternate approach for the 
results of \cite{KPV,MMT2}.

One key distinction made in these last papers was between quadratic and cubic nonlinearities. It is only in the latter case that the local well-posedness 
is proved in classical $H^s$ Sobolev spaces.
Another distinction is between small and large data; in the latter case, large energy growth may occur on arbitrarily short time scales, and a nontrapping condition is also required.

For work on other related models we refer the reader to Colin~\cite{MR1886962}, Chemin-Salort~\cite{MR3336355}, as well as the work of Huang-Tataru~\cite{HT1,HT2}, followed by  the  small data global result of Huang-Li-Tataru~\cite{HLT}; the last three articles are concerned with the skew mean curvature flow, which can be formulated as a quasilinear Schr\"odinger evolution when represented in a suitable gauge.

In order to work in $H^s$ Sobolev spaces,
in the present article we restrict our attention to the cubic case. By this, we mean

\begin{definition}
 We say that the equation \eqref{qnls}/\eqref{dqnls} is \emph{cubic} if $g$ is at least quadratic and $N$ is at least cubic at zero.   
\end{definition}
 
 For reference and easy comparison, we state the earlier local well-posedness results here, specialized to cubic nonlinearities:

\begin{theorem}[cubic nonlinearities \cite{MMT2,MMT3}]\label{t:regular}
 The nD cubic problem \eqref{qnls} is locally well-posed 
for nontrapping data in $H^s$ for $s > \frac{n+3}2$, and the cubic problem \eqref{dqnls} is locally well-posed 
in $H^s$ for $s > \frac{n+5}2$.
\end{theorem}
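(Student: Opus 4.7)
The plan is to follow the paradifferential/local energy approach developed by Marzuola-Metcalfe-Tataru, which in turn builds on the ideas of Kenig-Ponce-Vega and the Mizohata-Doi analysis of variable coefficient Schr\"odinger flows. The central phenomenon is that, for variable coefficient Schr\"odinger equations, asymmetric first order perturbations can cause an instantaneous loss of derivatives unless they are compensated by the local smoothing effect generated by a nontrapping metric. The cubic assumption keeps the perturbation $g(u)-I_n$ at least quadratic in $u$, which is precisely what permits a treatment in translation-invariant Sobolev spaces rather than in weighted spaces.

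First, I would set up two scales of function spaces: an $X^s$ norm that augments $L^\infty_t H^s_x$ with a local smoothing component, where each frequency piece $P_k u$ is measured with a $2^{k/2}$ gain in an $\ell^\infty_j L^2(I \times B_j)$ norm over unit cubes $B_j$ partitioning $\R^n$; and a dual $Y^s$ norm playing the role of an $L^1_t H^s_x$-type receiver for sources. The half derivative gain carried by the smoothing norm is the mechanism which allows absorbing the loss of one derivative in the quasilinear terms, while the $L^\infty_x$ control it provides (through Sobolev embedding within each spatial cube) is what makes it possible to freeze and compare coefficients.

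Second, I would prove a linear well-posedness estimate for the paradifferential Schr\"odinger equation
\begin{equation*}
(i\partial_t + \partial_\alpha T_{g^{\alpha\beta}} \partial_\beta)\, v = f,
\end{equation*}
namely $\|v\|_{X^s} \lesssim \|v(0)\|_{H^s} + \|f\|_{Y^s}$, under the assumption that $g-I_n$ is small in the relevant low-regularity norm. The core tool is a positive commutator argument using a pseudodifferential multiplier whose symbol is built from the bicharacteristic flow of $g^{\alpha\beta}\xi_\alpha\xi_\beta$; this is precisely where the nontrapping hypothesis enters, guaranteeing that the multiplier remains globally bounded. With $g$ only in $H^s$, the construction must be carried out paradifferentially, with a careful separation between the principal symbol and admissible remainders.

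Third, I would close the nonlinear problem by a contraction scheme, estimating the nonlinearity $N(u,\partial u)$ and the quasilinear correction $(g(u)-I_n)\Delta u$ in $Y^s$ through multilinear Sobolev-plus-smoothing bounds. The cubic structure is essential: a typical trilinear term of type $u \bar u\, \partial^2 u$ admits two factors placed in the smoothing norm (contributing half a derivative each) and one in $L^\infty_t H^s$, which just balances the two derivatives on the remaining factor. The main obstacle, and the source of the stated Sobolev thresholds, is the endpoint regularity at which the positive commutator construction and the trilinear estimates close simultaneously: one needs $s > n/2 + 1$ for Lipschitz control of the coefficients through Sobolev embedding, plus one additional half derivative to cover the smoothing gap, which produces $s > (n+3)/2$ for \eqref{qnls}. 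In the \eqref{dqnls} case the coefficient depends on $\partial u$ rather than $u$, costing one more derivative and raising the threshold to $s > (n+5)/2$. Uniqueness and continuous dependence on the initial data then follow at the end by running the same linearized estimate in a weaker norm, with an extra derivative of loss which is harmless for the difference of two solutions.
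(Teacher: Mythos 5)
This statement is not proved in the paper at all: Theorem~\ref{t:regular} is quoted as background from \cite{MMT2,MMT3}, so there is no internal argument to compare against. Your outline — local smoothing/local energy spaces with a half-derivative gain over cubes, a paradifferential linear estimate proved by a Doi-type positive commutator whose construction uses the nontrapping bicharacteristic flow, and cubic multilinear estimates closing the iteration (with differences of solutions handled in a weaker norm) — is essentially a faithful summary of the strategy of those cited works, including the source of the thresholds $s>\frac{n+3}2$ for \eqref{qnls} and $s>\frac{n+5}2$ for \eqref{dqnls}, so it is consistent with how the result is actually established in the literature the paper relies on.
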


The nontrapping condition above is needed for large data only, as it is automatically satisfied for small data.
In the 1D case, this result was improved in \cite{IT-qnls} up to the natural, optimal Sobolev threshold:

\begin{theorem}[1D cubic nonlinearities \cite{IT-qnls}]\label{t:local1}
 The 1D cubic problem \eqref{qnls} is locally well-posed for small initial data in $H^s$ for $s > 1$, and the 1D cubic problem \eqref{dqnls} is locally well-posed in $H^s$ for $s > 2$.
\end{theorem}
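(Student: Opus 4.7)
I would attack both statements by the paradifferential normal form framework combined with interaction Morawetz bilinear $L^2$ bounds, which is the natural 1D precursor of what will be developed in higher dimension in this paper. Rewrite \eqref{qnls} schematically as
\[
i u_t + T_{g(u)}\partial_x^2 u = \text{(cubic perturbative terms)},
\]
using paraproduct decompositions to isolate the genuinely quasilinear principal part $T_{g(u)}\partial_x^2$ from balanced/low-high interactions. Since the cubic hypothesis forces $g$ to vanish quadratically and $N$ cubically at $u=0$, every remainder term carries at least two extra factors of $u$ beyond the linear Schr\"odinger part. This is exactly the structural cushion needed to close energy estimates at $s>1$.

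\textbf{Energy estimates via modified energies.} Next I would construct an $H^s$ modified energy $E_s(u)\approx \|u\|_{H^s}^2$, by correcting the naive energy with cubic-in-$u$ normal form terms chosen so that the leading growth in $\tfrac{d}{dt}\|u\|_{H^s}^2$, due to the quasilinear interaction of $T_{g(u)}\partial_x^2$ with $|D|^s u$, is cancelled. After this cancellation $\tfrac{d}{dt} E_s(u)$ becomes a genuinely quartic expression of schematic form
\[
\int |\partial_x u|^2\, |u|^2\, dx \;+\; \lot.
\]
Because $H^s\not\hookrightarrow W^{1,\infty}$ for $1<s<\tfrac32$, this cannot be closed by Sobolev alone. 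Instead one invokes bilinear interaction Morawetz bounds of the schematic form
\[
\||D|^{1/2}(u \bar u)\|_{L^2_{t,x}} \lesssim \|u_0\|_{L^2}^2,
\]
together with Littlewood--Paley-localized variants between differing frequencies. Small data keeps $g(u)$ close to the identity, so these bilinear estimates, originally for the flat Schr\"odinger group, persist for the variable-coefficient paradifferential group $e^{-it T_{g(u)}\partial_x^2}$ by a perturbative argument. A bootstrap on $E_s$ then yields the a priori $H^s$ bound.

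\textbf{Existence, uniqueness and continuous dependence.} Existence follows from the uniform $H^s$ bound by a standard regularization-and-compactness argument applied to a smooth approximating sequence. For uniqueness and Lipschitz continuous dependence I would run the same paradifferential-plus-Morawetz analysis on the difference $w=u_1-u_2$ of two solutions, which satisfies a linearized equation of the same type; an $L^2$ difference energy suffices because the extra factors of $u$ from the cubic structure absorb the lost derivatives. A frequency envelope argument then upgrades Lipschitz dependence in $L^2$ to continuous dependence in $H^s$. For \eqref{dqnls} the same program applies after differentiating once in $x$ to absorb the additional derivative in $g$ and $N$; this raises the required regularity from $s>1$ to $s>2$.

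\textbf{Main obstacle.} The hard part is not any single ingredient but their compatibility: the normal form corrections must be designed so that (i) $E_s(u)$ is equivalent to $\|u\|_{H^s}^2$, (ii) $\tfrac{d}{dt} E_s(u)$ takes a bilinear/Morawetz-friendly form, and (iii) the interaction Morawetz estimate itself survives the replacement of $-\partial_x^2$ by $-T_{g(u)}\partial_x^2$. The last point is precisely where the smallness hypothesis on $u$ enters, keeping the metric close enough to $I$ that the flat 1D interaction Morawetz mechanism can be ported over by perturbation.
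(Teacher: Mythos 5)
This theorem is not proved in the present paper---it is imported from the authors' earlier work \cite{IT-qnls}---and your sketch follows essentially the same strategy as that proof: a paradifferential reduction of the cubic flow, normal-form ideas implemented as modified energies, interaction-Morawetz bilinear $L^2$ bounds run inside a bootstrap, and rough solutions obtained as limits of smooth solutions via $L^2$ bounds for the linearized equation and frequency envelopes, with \eqref{dqnls} handled by differentiating once so that $s>1$ becomes $s>2$. The one ingredient you underplay is the short-time ($L^6$-based) Strichartz toolbox that \cite{IT-qnls} uses alongside the bilinear bounds to control balanced cubic interactions in 1D, but that is a refinement of, not a departure from, the same approach.
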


This is not only a major improvement over Theorem~\ref{t:regular}, it is also likely sharp for generic problems of this type, see the heuristic arguments in \cite{IT-conjecture}. 
\bigskip

We now turn our attention to the first objective of this paper, which is to study the same local well-posedness problem in two and higher space dimensions. Our results are as follows:

\begin{theorem}\label{t:local2}
Let $n = 2$. The 2D cubic problem \eqref{qnls} is locally well-posed for small data in $H^s$ for $s >  \frac32$, and the 2D cubic problem \eqref{dqnls} is locally well-posed for small data in $H^s$ for $s >  \frac52$.
\end{theorem}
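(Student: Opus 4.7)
The plan is to adapt the one-dimensional strategy of \cite{IT-qnls} to the 2D setting, swapping the role of the $L^6_{t,x}$ Strichartz norm (dominant in 1D) for the $L^4_{t,x}$ Strichartz norm (the sharp Strichartz exponent in 2D), while retaining bilinear $L^2$ local-smoothing / interaction Morawetz bounds as the second key control. I would write out the argument for \eqref{qnls} at $s > 3/2$; the \eqref{dqnls} case at $s > 5/2$ then follows after differentiating once and reducing to a \eqref{qnls}-type system.

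First, I would paralinearize \eqref{qnls}. Writing $g(u) = I_n + \tilde g(u)$ with $\tilde g$ at least quadratic and splitting off the paradifferential operator $L := i\partial_t + T_{g(u)}\Delta$, one has $Lu = R(u,\partial u)$ with $R$ cubic and higher in $(u,\partial u)$ by the cubic hypothesis. I would then set up a control norm $X$ combining (a) a Strichartz component built on $L^4_{t,x}$ at the appropriate scale-invariant regularity (applied to frequency-localized pieces), and (b) dyadic bilinear $L^2$ bounds on high--low products such as $\bar u_{<k} u_k$, both to be established \emph{for $L$} rather than the flat Schr\"odinger equation: the $L^4_{t,x}$ bound via a wave-packet / FBI parametrix construction, and the bilinear bound via an interaction Morawetz identity for $L$. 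Smallness of $T_{\tilde g(u)}\Delta$ in the relevant operator norms, guaranteed for small data since $\tilde g$ is quadratic, makes these variable-coefficient estimates possible.

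Second, I would construct a modified $H^s$ energy to propagate Sobolev regularity. The cubic hypothesis ensures that both the commutator $[|D|^s, T_{g(u)}]\Delta u$ and the nonlinear contribution $|D|^s R$ in $\tfrac{d}{dt}\||D|^s u\|_{L^2}^2$ are at least quartic in $u$. A normal-form correction $Q_4(u)$ then yields an equivalent energy
\begin{equation*}
E_s(u) \;=\; \||D|^s u\|_{L^2}^2 + \int Q_4(u)\, dx \;\sim\; \|u\|_{H^s}^2
\end{equation*}
satisfying $\tfrac{d}{dt} E_s(u) \lesssim \|u\|_X^2 \|u\|_{H^s}^2$. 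Coupled with the bootstrap bound $\|u\|_X \lesssim \|u_0\|_{H^s}$ delivered by the paradifferential Strichartz / bilinear analysis, this closes the $H^s$ a priori estimate on a unit time interval for sufficiently small data. Existence, uniqueness and continuous dependence then follow in the standard quasilinear fashion, analyzing the linearized equation at the $L^2$ level and propagating the $H^s$ bounds via frequency envelopes.

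The main obstacle, which is precisely what places the 2D threshold at $s > 3/2$ (only one half derivative above scaling), is establishing the $L^4_{t,x}$ Strichartz bound for $L$ \emph{with no derivative loss}. Because $L^4_{t,x}$ is the sharp scale-invariant Strichartz exponent in two dimensions, no Sobolev slack is available to absorb variable-coefficient errors, in contrast to $n \geq 3$ where interpolation with constant-coefficient Strichartz is readily available (cf. \cite{HLT}). The proof must exploit both the smallness and the regularity of $\tilde g(u)$: being quadratic in $u$ with $u \in H^s$, $s > 3/2$, gives $\tilde g(u)$ small in $L^\infty_{t,x}$ together with enough derivative control to run the FBI / wave-packet parametrix for $L$ uniformly in frequency, and to justify the bilinear interaction Morawetz identity for $L$ at the $H^s$ level.
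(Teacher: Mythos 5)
Your overall skeleton (paradifferential truncation of the metric, bilinear $L^2$ bounds via an interaction Morawetz identity, Strichartz control of balanced interactions, and a frequency-envelope limiting argument for rough data) is the same as the paper's, but the analytic linchpin you propose does not hold up. You make the whole argument rest on a \emph{lossless} $L^4_{t,x}$ Strichartz estimate for the variable-coefficient paradifferential flow, to be obtained by an FBI/wave-packet parametrix using smallness of $\tilde g(u)$. This is precisely the step the paper identifies as out of reach and deliberately avoids: in 2D, $L^4_{t,x}$ sits on the sharp Strichartz line, and the metric $g_{[<\lambda]}(u)$ with $u\in H^s$, $s>\tfrac32$, has only about half a derivative beyond Lipschitz regularity, far below the $C^2$-type thresholds under which lossless parametrix constructions are known; smallness of $\tilde g$ does not rescue this, since the obstruction is the accumulation of phase errors over a unit time scale, not the size of the coefficients. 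What the interaction Morawetz machinery actually gives in 2D is a lossless $L^4_tL^8_x$ bound (off the sharp line, coming from the balanced bilinear estimate), which interpolated with energy gives lossless $L^6_tL^4_x$, and then H\"older in time yields $L^4_{t,x}$ \emph{with a $1/6$ derivative loss} locally in time. The paper's point is that this lossy bound, combined with the unbalanced and transversal bilinear $L^2$ estimates, is enough to treat all cubic source terms perturbatively on bounded time intervals, so losslessness at the sharp exponent is never needed for the local result. Relatedly, your diagnosis that the threshold $s>\tfrac32$ is dictated by the lossless $L^4$ bound is off: in the paper it comes from the bilinear $L^2$ estimates and the $L^1_{t,x}$ estimates of the paradifferential source terms, which require $s>\tfrac{n+1}{2}$.

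A secondary issue is the modified energy with a quartic normal-form correction $Q_4$. In two dimensions the resonant set for cubic interactions with phase rotation symmetry is large (rectangles of frequencies), so the balanced resonant quartic contributions to $\tfrac{d}{dt}\||D|^su\|_{L^2}^2$ cannot be removed by a normal form; the paper only introduces quartic density/flux corrections in the \emph{global} low-regularity argument, and then only under the extra conservative hypothesis, which is not available in Theorem~\ref{t:local2}. For the local theorem your energy step can be repaired without any correction by estimating the quartic terms directly with the lossy $L^4_{t,x}$ bound and the bilinear estimates on a fixed time interval (this is how the paper proceeds, via the bound on $\|N_\lambda(u)\bar u_\lambda^{x_0}\|_{L^1_{t,x}}$ fed into the interaction Morawetz bootstrap), but as written your scheme both relies on an unavailable lossless Strichartz estimate and on a normal form that the 2D resonance structure generically forbids.
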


\begin{theorem}\label{t:local3+}
Let $n \geq 3$. The nD cubic problem \eqref{qnls} is locally well-posed for small data in $H^s$ for $s >  \frac{n+1}2$, and the nD cubic problem \eqref{dqnls} is locally well-posed for small data in $H^s$ for $s >  \frac{n+3}2$.
\end{theorem}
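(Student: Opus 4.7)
The plan is to transport the three-component framework (modified-energy estimates plus bilinear/local-smoothing bounds plus Strichartz estimates) used in one dimension in \cite{IT-qnls} into dimension $n \geq 3$, replacing the one-dimensional $L^6_{t,x}$ Strichartz norm by the classical Schr\"odinger endpoint pair $L^2_t L^{2n/(n-2)}_x$. The scaling gain of this pair is exactly one half derivative, which precisely matches the gap between the advertised Sobolev threshold $s > (n+1)/2$ and the scale-critical exponent $n/2$. In contrast with the delicate two-dimensional case (Theorem \ref{t:local2}), here one has genuine dispersive room to spare, and one should not need any conservative or normal-form structure beyond the cubic scaling, which is the reason the present theorem can be stated without additional structural hypotheses.

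\textbf{Reduction and energy estimates.} First I would paralinearize \eqref{qnls} and isolate the paradifferential principal part $i \partial_t + T_{g(u)} \Delta$. Because $g$ vanishes quadratically at the origin, both the coefficient $g(u) - \Id$ and the Bony paralinearization remainder carry at least two factors of $u$, so every nonlinear correction entering the estimates is effectively cubic. Next, I would apply $\langle D \rangle^s$, compute the commutator $[\langle D \rangle^s, T_{g(u)} \Delta]$, and remove its principal first-order part by a Doi-type gauge conjugation, which is always available for small data since the bicharacteristic flow of $T_{g(u)}$ is nontrapping. The zeroth-order residual commutator is absorbed into a modified-energy correction, producing a differential inequality of the schematic form $\partial_t \|u\|_{H^s}^2 \lesssim \|u\|_{H^s}^2 \, \|u\|_X^2$, where $X$ is the Strichartz norm introduced below.

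\textbf{Strichartz bound and bootstrap.} Second, I would establish a variable-coefficient endpoint Strichartz bound for the paradifferential flow, of the form
\[
\|\langle D \rangle^{s-1/2} u\|_{L^2_t L^{2n/(n-2)}_x} \lesssim \|u(0)\|_{H^s} + \text{(cubic source)}.
\]
For small data this follows from the wave-packet/FBI parametrix of Marzuola--Metcalfe--Tataru applied to $i\partial_t + T_{g(u)} \Delta$, routed through a local smoothing estimate to absorb the coefficient roughness. The cubic source terms are then estimated in $L^1_t H^{s-1/2}$ via H\"older and Sobolev embedding, placing one factor in the Strichartz norm and the other two in $L^\infty_t H^s$; it is in this step that the cubic scaling yields the required half-derivative of slack. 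A standard continuity argument then closes a bootstrap on the combined norm $\|u\|_{L^\infty_t H^s} + \|\langle D \rangle^{s-1/2} u\|_{L^2_t L^{2n/(n-2)}_x}$ on a short time interval. Uniqueness and continuous dependence follow by running the same scheme on the difference of two solutions, which satisfies a quadratic equation and is therefore easier to control at one derivative below. Finally, the \eqref{dqnls} statement is obtained by differentiating once in space and applying the same argument to $\partial u$, which shifts every regularity requirement up by one derivative.

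\textbf{Main obstacle.} The central technical difficulty is the variable-coefficient Strichartz bound at the minimal regularity: since $s > (n+1)/2$ gives $g(u)$ only marginally better than $C^{3/2-}$, the wave-packet parametrix has essentially no slack. The cubic structure is used here in an essential way, because the quadratic vanishing of $g$ lifts the effective regularity of the coefficient relative to that of $u$, leaving just enough room to construct the parametrix without derivative loss at this threshold. A secondary technical challenge is the precise matching of the local smoothing norm that one uses to close the Strichartz parametrix with the norm in which the cubic source terms can actually be estimated; one expects this to be achieved by the same kind of paradifferential/frequency-envelope accounting as in \cite{IT-qnls}, only now with the $L^2_t L^{2n/(n-2)}_x$ endpoint playing the role of the $L^6$ norm.
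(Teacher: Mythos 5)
Your plan is essentially the Kenig--Ponce--Vega/Marzuola--Metcalfe--Tataru scheme (paralinearization, Doi-type gauge, local smoothing, and a wave-packet parametrix giving lossless Strichartz bounds), and that scheme is exactly what yields the \emph{older} threshold $s>\frac{n+3}2$ of Theorem~\ref{t:regular}; it does not reach $s>\frac{n+1}2$. The central step you rely on --- a lossless variable-coefficient endpoint bound $\|\la D\ra^{s-\frac12}u\|_{L^2_tL^{2n/(n-2)}_x}\lesssim\|u_0\|_{H^s}+\dots$ built from the MMT parametrix with metric $g(u)$, $u\in H^s$, $s$ just above $\frac{n+1}2$ --- is a genuine gap: by Sobolev embedding such a metric is only slightly better than $C^{1/2}$ (not $C^{3/2-}$ as you state; the quadratic vanishing of $g$ at $0$ gives smallness, not extra H\"older regularity), and no lossless parametrix or local-smoothing/Strichartz theory is available at that coefficient regularity. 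Indeed the paper explicitly flags that proving Strichartz estimates directly for the paradifferential flow "becomes a daunting task in higher dimension," and the Strichartz bounds it does prove (Section~\ref{s:Str}) carry a derivative loss; they are never the source of the gain.

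What actually closes the argument at $s>\frac{n+1}2$ in the paper is a different mechanism: interaction Morawetz identities built from frequency-localized density--flux relations for mass and momentum, which produce the balanced and unbalanced bilinear $L^2$ bounds \eqref{uab-bi-bal}, \eqref{uab-bi-unbal}. In $n\geq 3$ the lossless $L^4_{t,x}$ bound is then a \emph{corollary} of the balanced bilinear estimate (via \eqref{IM-clean}), not an input from a parametrix, and the unbalanced bilinear bounds (transversality/smoothing) are what allow the paradifferential source terms $N_\lambda(u)$ to be estimated in the duality norm $\|N_\lambda \bu_\lambda^{x_0}\|_{L^1_{t,x}}$ (Proposition~\ref{p:N-lambda}); these interactions cannot be handled by H\"older with one Strichartz factor and two energy factors at this regularity. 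The whole proof is then a frequency-envelope bootstrap on the linear and bilinear bounds simultaneously, with rough solutions obtained as limits of smooth solutions via $L^2$ estimates for the linearized flow. Without the interaction Morawetz/bilinear component, your scheme would have to give back the full derivative and land at the MMT threshold, so as written the proposal does not prove the stated theorem.
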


Here well-posedness is interpreted in the Hadamard sense, and in effect in an enhanced Hadamard form, which includes
:
\begin{itemize}
    \item existence of solutions in  $C([0,T]; H^s)$.
    \item uniqueness of regular solutions.
    \item uniqueness of rough solutions as 
    uniform limits of regular solutions.
    \item continuous dependence on the initial data.
    \item Lipschitz dependence of the solutions on the initial data in a weaker topology ($L^2$)
    \item higher regularity: more regular data yields more regular solutions.
\end{itemize}

These results do not require any structure for the nonlinearity, other than that it is cubic. While the 
formulation is identical in two and higher dimensions,
we have stated the two results separately in order
to emphasize the fact the proofs are different, with the
two dimensional case being the more difficult one.
 
To better understand their significance, one may compare both of these results with the leading order scaling for these problems, which corresponds to the exponents $s_c = \frac{n}2$ 
for \eqref{qnls}, respectively $s_c = \frac{n+2}2$ for \eqref{qnls}.
So our results are only $1/2$ derivative above scaling, which is similar to the earlier one dimensional result. However, this similarity at the level of the results does not translate into a similarity at the level of the proofs.

Concerning the optimality of the local well-posedness results in the above theorems, we propose the following 
\begin{conjecture}
The range of $s$ in Theorems~\ref{t:local2},\ref{t:local3+}    is
generically sharp, except possibly for the endpoint.
\end{conjecture}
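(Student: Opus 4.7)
Since the conjecture is a sharpness statement, the plan is not to prove positive estimates but to exhibit, for a generic cubic nonlinearity and for each $s$ strictly below the threshold in Theorems~\ref{t:local2}--\ref{t:local3+}, a family of initial data in $H^s$ witnessing the failure of some basic well-posedness property, most naturally uniform continuity of the data-to-solution map from $H^s$ to $C([0,T];H^s)$. The strategy is the higher dimensional counterpart of the $1$D heuristic argument of \cite{IT-conjecture}: build two wave packet solutions whose data differ by $o(1)$ in $H^s$ but whose evolutions diverge in $H^s$ on arbitrarily short time intervals.

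Concretely, I would start with the simplest admissible nonlinearity, such as $g(u) = (1 + |u|^2) I$ in \eqref{qnls}, and consider a high-frequency wave packet of frequency $\lambda$, transverse width $1$, longitudinal length $L$, and amplitude $A \sim L^{-1/2}\lambda^{-s}$, normalized so that $\|u\|_{H^s} \sim 1$. The interaction of this packet with a second, neighbouring low-frequency profile perturbs the effective metric $g(u)$ seen by the packet and hence modulates both its phase and its group velocity. A careful bookkeeping of these phase and position shifts, optimized over the packet length $L$ against the natural dispersive spreading time $L^2/\lambda$, reproduces in every dimension the $1$D pattern of \cite{IT-conjecture} and singles out $s_{\mathrm{crit}} = (n+1)/2$ for \eqref{qnls}, respectively $(n+3)/2$ for \eqref{dqnls}, exactly matching the theorems. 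Two packets driven by nearby but distinct low-frequency companions then end up misaligned by an amount comparable to their transverse scale, producing an $H^s$ difference of order one from $H^s$-close data.

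To upgrade this heuristic into a genuine counterexample I would construct approximate solutions via a WKB/geometric optics ansatz along bicharacteristics of the varying metric, control the error on the instability time scale by reusing the local energy machinery and bilinear estimates developed for Theorems~\ref{t:local2}--\ref{t:local3+}, and finally produce exact solutions with matched initial data by a fixed point argument in the error equation. The main obstacle is that the instability must be shown to be \emph{generic}: certain algebraic structures in $g$, in particular the conservative forms that enable the $2$D global theorem of the present paper, can cancel the leading order phase shift, so a robust statement would proceed by identifying an open and dense class of nonlinearities for which the mechanism survives. Secondarily, in dimension $n = 2$ the dispersive margin in the error estimate is the smallest, and this is also why the conjecture excludes the endpoint: at $s = (n+1)/2$ or $(n+3)/2$ the heuristic gain is just $O(1)$ and a logarithmic refinement cannot be ruled out a priori.
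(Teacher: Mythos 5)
This statement is a \emph{conjecture} in the paper: the authors do not prove it, and the only guidance they give is the remark immediately following it, which says that the expected obstruction in dimension $n\geq 2$ is \emph{not} the wave-packet self-interaction mechanism of the 1D heuristic in \cite{IT-conjecture}, but rather the failure of nontrapping for the metric $g(u_0)$ when $s$ drops below the threshold. Your proposal is therefore not a proof of anything in the paper, and, more importantly, it is built on precisely the mechanism the authors indicate is the wrong one in higher dimensions. The numerology supports their remark: the exponent $\frac{n+1}2$ is exactly the trace-theorem threshold at which $H^s$ data barely controls $u_0$ and $\nabla u_0$ in $L^2$ along a line (see the proof of the nontrapping lemma in Section 8, which uses $s>\frac{n+1}2$ and nothing weaker), so below it one expects small data whose metric traps bicharacteristics, destroying local smoothing. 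By contrast, a self-interaction bookkeeping of a single packet in $n\geq 2$ dimensions benefits from the extra transverse dispersion, and for fixed $H^s$ norm the packet amplitude is smaller than in 1D; there is no reason such a computation "singles out" $s_{\mathrm{crit}}=\frac{n+1}2$, and you assert this without carrying it out. This is the central gap: the instability mechanism you choose is not calibrated to the claimed threshold.

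Even setting aside the choice of mechanism, the proposal has unproven steps that are the actual content of any such sharpness result: (i) upgrading the WKB ansatz to exact solutions on the instability time scale cannot simply "reuse the local energy machinery and bilinear estimates" of the paper, since those are established only for $s$ above the threshold and under the small-data bootstrap, whereas the whole point is to work below it and to propagate a difference of solutions there; (ii) genericity is essential to the statement (the conservative 2D structure, and more generally algebraic cancellations in $g$, can kill the leading-order effect), and you acknowledge but do not address how to isolate an open dense class; (iii) failure of uniform continuity of the flow map is weaker than failure of the well-posedness range being sharp in the sense the authors intend, so one would also need to decide which property is being disproved. A credible route toward the conjecture would instead start from constructing small $H^s$ data, $s<\frac{n+1}2$, with a trapped geodesic for $g(u_0)$, and show that trapping forces unbounded growth or breakdown of the frequency-localized estimates; that is a genuinely different program from the one you outline.
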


 This is however due to a different reason than in the one dimensional case, where the main obstruction appears to come from the self-interaction of a single wave-packet, see \cite{IT-conjecture}. Instead, in the higher dimensional setting the obstruction seems related to trapping, which generically should fail for $s$ below our thresholds even in the small data case.

In a related vein, we also propose

 \begin{conjecture}
The results in Theorems~\ref{t:local2},\ref{t:local3+} also hold for large data, under an additional nontrapping assumption.
\end{conjecture}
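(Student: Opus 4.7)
The plan is to adapt the small-data strategy underlying Theorems~\ref{t:local2} and \ref{t:local3+} to a genuinely variable-coefficient setting, replacing the perturbative analysis around $g(0)=I_n$ by an analysis around $g(u)$. First I would put \eqref{qnls} in paradifferential normal form, rewriting it schematically as $(i\partial_t + T_{g(u)} \Delta) u = (\text{paraproduct remainder}) + N$, so that the principal operator is a time-dependent Schr\"odinger operator whose symbol $g^{ij}(u)\xi_i\xi_j$ is a nontrapping Hamiltonian in the sense of Doi. The core step is then to run, in this variable-coefficient setting, the modified energy and interaction Morawetz machinery that drives the small-data proofs: modified energies are recomputed with corrections depending nontrivially on $g(u)$ and its first few derivatives, and the bilinear $L^2$ and half-derivative local smoothing estimates are proved directly for the paradifferential flow via positive commutator arguments that consume the nontrapping hypothesis. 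Once these two ingredients are in hand, Strichartz estimates are reconstructed by the standard $TT^*$ plus frequency-localization chain, yielding the same scale of norms as in the small-data case.

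The main obstacle is the \emph{propagation of the nontrapping hypothesis} along the flow. Nontrapping is an open condition on $g(u_0)$, but the bicharacteristic flow depends sensitively on the metric, so one must show that $g(u(t,\cdot))$ stays nontrapping on the entire lifespan. At the Sobolev thresholds allowed by Theorems~\ref{t:local2}--\ref{t:local3+} one barely controls $\nabla u$, so a direct continuity argument does not suffice, and one is forced into a simultaneous bootstrap in which an escape function for the bicharacteristic flow is propagated alongside the modified energy and the bilinear norms. This is precisely the point where the MMT3 argument is expensive, as one derivative is effectively absorbed to obtain clean control of the escape function; lowering the regularity by a full derivative, as demanded by the conjecture, will require substantially sharper commutator and paraproduct bounds against the nontrapping weight. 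In 2D, where $L^4$ is the endpoint Strichartz norm, extra care will be needed to prevent the nonlinear source from saturating the available dispersion, and it is possible that structural hypotheses on $N$ will have to be imposed in addition to nontrapping at the lowest regularity level.

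Once the linear and bilinear estimates are established, the construction of solutions via a frequency-envelope iteration scheme and the propagation of the full Hadamard package (existence, uniqueness of rough solutions as uniform limits of regular ones, continuous and Lipschitz-in-$L^2$ dependence, and higher regularity) should follow mechanically from the same estimates applied to the linearized and difference equations, exactly as in the small-data theorems. The transition from \eqref{qnls} to \eqref{dqnls} would then cost the expected one derivative, as in the earlier results.
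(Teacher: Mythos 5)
The statement you are addressing is a \emph{conjecture} in the paper: Theorems~\ref{t:local2} and \ref{t:local3+} are proved only for small data, and the large-data, nontrapping version is explicitly left open. What you have written is a plausible research outline, not a proof, and it cannot be accepted as establishing the statement. The decisive steps are exactly the ones you name and then defer: (i) propagation of the nontrapping condition along the flow at regularity $s$ just above $\frac{n+1}2$ (respectively $\frac32$ in 2D), where, as you note, one barely controls $\nabla u$ and no escape-function bootstrap is actually constructed; and (ii) closing the interaction Morawetz / bilinear $L^2$ bootstrap without smallness. On the second point, note that the paper's argument closes its bootstrap precisely because every occurrence of the bootstrap constant $C$ comes paired with a power of $\epsilon$ (see, e.g., the treatment of $\bR^4_1$, $\bR^4_2$ and the source bounds in Proposition~\ref{p:N-lambda}, all of which produce factors $C^2\epsilon^2$ or better); for large data there is no such gain, so the entire perturbative treatment of the metric terms $(g_{[<\lambda]}-I)$, of the flux errors, and of the nonlinear sources must be replaced by a different mechanism (frequency-dependent time localization, local energy decay from nontrapping, or a genuinely variable-coefficient positive commutator argument), none of which you carry out. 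Your own concession that ``structural hypotheses on $N$ will have to be imposed'' at the lowest regularity in 2D is a further sign that the statement as posed is not what your sketch would deliver.

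If you wish to pursue this, the honest framing is that you have identified the two obstructions correctly and proposed reasonable tools (Doi-type escape functions, paradifferential symmetrization, a simultaneous bootstrap of the escape function with the bilinear norms), but each of these must be turned into quantitative lemmas: a commutator estimate against the nontrapping weight that loses strictly less than one derivative relative to \cite{MMT3}, and a large-data substitute for the $\epsilon$-absorption in the Morawetz error analysis. Until those are supplied, the conjecture remains open and your text should be presented as a strategy, not a proof.
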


One main difference between various dimensions arises at the level of the Strichartz estimates. In one dimension we have a better toolbox for short-time Strichartz, see \cite{IT-qnls}. On the other hand 
in dimension three and higher the $L^4_{t,x}$ Strichartz bound
is obtained directly from the bilinear analysis.
By far the most difficult case is the two dimensional one, for which the 2D toolbox is much smaller than in the 1D case, while the bilinear estimates are weaker than in higher dimension.

The starting point for the proof of our results is provided by the earlier results stated  in Theorem~\ref{t:regular}, which, say for \eqref{qnls}, apply for initial data in $H^\sigma$ with $\sigma > \frac{n+3}2$. The lifespan of these solutions a priori depends on the $H^{\sigma}$ size of the initial data, which  can be arbitrarily large.
However, once we prove suitable \emph{a-priori} bounds on these solutions, we will be able to extend the lifespan so that it depends only on the $H^{s}$ size of the data.  

Once we have regular solutions on uniform time scales, we produce the rough, $H^{s}$ solutions as unique uniform limits of regular solutions. 
This requires careful estimates for the linearized 
flow, which we do at the $L^2$ level.

\subsection{ Quasilinear Schr\"odinger flows:  global solutions}
Our objective here is to consider the global well-posedness for the 2D problem for initial data which is small in $H^s$, in the spirit of Conjecture~\ref{c:nld2}.

This 1D counterpart of this conjecture was proved in \cite{IT-qnls}, and we recall this first. 
In contrast to the local well-posedness result, refined long-time and  global in time results in 1D require some natural structural assumptions on the equations, which will be described next. 

\begin{definition}
We say that the equation \eqref{qnls}/\eqref{dqnls}
has  \emph{phase rotation} symmetry if it is 
invariant with respect to the transformation 
$u \to u e^{i\theta}$ for $\theta \in \R$.
\end{definition}

One consequence of this assumption is that 
the Taylor series of the nonlinearity has only
odd terms, which are multilinear expressions 
in $u$ and $\bar u$ (and their derivatives) with one more $u$ factor. In fact, it is sufficient to assume that this holds for the cubic terms in the equation.

To state the next assumptions it is convenient to write the equations in a semilinear way. Assuming the phase rotation symmetry, and also harmlessly assuming that $g(0) = 1$, this semilinear form is
\begin{equation}\label{eq-cubic}
i u_t + \partial^2_x u = C(u,\bar u, u) + \text{higher order terms},
\end{equation}
where $C$ is a translation invariant trilinear form with symbol $c(\xi_1,\xi_2,\xi_3)$. We refer the reader to Section~\ref{s:notations} for  a description of our notations.

\begin{definition}
We say that the 1D equation \eqref{qnls}/\eqref{dqnls}
is \emph{conservative} if  the cubic component of the nonlinearity satisfies 
\[
c(\xi,\xi,\xi), \ \partial_{\xi_j}
c(\xi,\xi,\xi) \in \R .
\]
\end{definition}
It is also 
immediately satisfied if the problem admits a coercive conservation law, but, conversely, it does not imply the existence of such a conservation law.

One final necessary assumption in 1D is that the problem is defocusing:

\begin{definition}
We say that the equation \eqref{qnls}/\eqref{dqnls}
is \emph{defocusing} if  the cubic component of the nonlinearity satisfies 
\[
c(\xi,\xi,\xi) \gtrsim 1+ \xi^2.
\]
\end{definition}

Now we can state the one dimensional result:

\begin{theorem}\label{t:global}
 Consider a 1D \eqref{qnls}/\eqref{dqnls} problem, which we assume to be cubic, phase rotation invariant and defocusing.
Assume that  the initial data $u_0$ is small in $H^s$,
where $s > 1$ for \eqref{qnls} and $s > 2$ for \eqref{dqnls}.
Then the solutions are global in time, and satisfy 
\begin{equation}
\| u\|_{L^\infty_t H^s_x} \lesssim \| u_0\|_{H^s_x}.
\end{equation}
\end{theorem}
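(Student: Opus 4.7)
The plan is a continuity argument on a modified $H^s$ energy, driven by a bilinear interaction Morawetz estimate. The three structural hypotheses play distinct roles: phase rotation symmetry restricts the cubic interactions to those with one more $u$ than $\bar u$; the conservative hypothesis makes the diagonal cubic symbol and its first $\xi$-derivative real, so that a spatial integration by parts converts the surviving resonant contribution to $\frac{d}{dt}\|u\|_{H^s}^2$ into a manageable quartic expression; and the defocusing bound $c(\xi,\xi,\xi)\gtrsim 1+\xi^2$ provides coercivity inside the Morawetz identity. I would treat \eqref{qnls}; the \eqref{dqnls} version follows by paradifferential differentiation, with the two-derivative shift reflected in the regularity threshold.

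Starting from a local $H^s$ solution provided by Theorem~\ref{t:local1} with $\|u_0\|_{H^s}=\epsilon\ll 1$, I run a bootstrap on the maximal interval $[0,T]$ of existence under the three simultaneous bounds
\begin{equation*}
\|u\|_{L^\infty_t H^s_x} + \|u\|_{L^6_{t,x}} + \bigl\| |D_x|^{1/2}(|u|^2)\bigr\|_{L^2_{t,x}} \le C\epsilon ,
\end{equation*}
which I aim to improve to $(C/2)\epsilon$. The $L^6$ Strichartz norm absorbs quintic and higher errors, the bilinear $L^2$ norm encodes the gain extracted from the defocusing structure, and the $H^s$ energy controls the high frequencies required to re-apply the local theory and extend past $T$. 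Once all three bounds are improved, $T$ can be extended to $+\infty$ and the stated estimate on $\|u\|_{L^\infty_t H^s_x}$ follows.

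For the energy, I work with $E^s(u)=\|u\|_{H^s}^2+Q^s(u)+R^s(u)$, where $Q^s$ is a quartic normal-form correction canceling the non-resonant cubic contribution to $\frac{d}{dt}\|u\|_{H^s}^2$, and $R^s$ absorbs the quasilinear correction coming from $g(u)-1$. Because the 1D cubic resonance function is nondegenerate off the resonant set, $Q^s$ has a bounded symbol and $E^s(u)\sim\|u\|_{H^s}^2$ for small $u$. On the resonant set, the conservative hypothesis enables an integration by parts in $\xi$ that trades the surviving cubic for a quartic expression of the form $\int c(\xi,\xi,\xi)\bigl||D_x|^{1/2}(|u|^2)\bigr|^2\,dx\,dt$ up to lower-order remainders, matching exactly the output of the Morawetz estimate. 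The remaining quintic and higher errors close by H\"older in the three bootstrap norms. The bilinear bound itself comes from the 1D interaction Morawetz identity obtained by tensoring the mass density flux identity for $u(t,x)$ with that for $u(t,y)$ against weight $\sgn(x-y)$; the bulk term, after paradifferential rearrangement, produces exactly the quantity above, while the boundary side is bounded by $\|u\|_{L^\infty_t H^{1/2}_x}^2 \|u\|_{L^\infty_t L^2_x}^2 \lesssim \epsilon^4$.

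The hard part is aligning the Morawetz computation with the quasilinear setting: the non-flat metric $g(u)$ spoils the clean NLS mass–momentum structure and produces commutator and remainder terms at each step, both in the modified energy identity and in the Morawetz one. The remedy is a change of spatial variable (or equivalent gauge) that normalizes $g(u)$ to the identity at leading order, after which the remaining corrections are of lower order and are absorbable thanks to the cubic cancellation at $u=0$ combined with the smallness of the three bootstrap norms.
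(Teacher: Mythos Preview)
This theorem is not proved in the present paper; it is quoted from the authors' earlier work \cite{IT-qnls}, so there is no proof here to compare against directly. That said, the paper describes the \cite{IT-qnls} strategy in enough detail, and the higher-dimensional arguments in Sections~\ref{s:df}--\ref{s:para} are close analogues, so one can assess your outline against that.

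Your broad architecture---bootstrap on energy plus Strichartz plus bilinear $L^2$, modified energies with quartic normal-form corrections to remove nonresonant cubic interactions, and interaction Morawetz for the bilinear bound---matches the described approach. Two points need correction. First, the 1D bilinear quantity produced by interaction Morawetz with weight $a(x)=|x|$ is $\|\partial_x(|u|^2)\|_{L^2_{t,x}}$, not $\||D_x|^{1/2}(|u|^2)\|_{L^2_{t,x}}$; compare the formula $\||D|^{(3-n)/2}|u|^2\|_{L^2}$ in \eqref{IM-clean} at $n=1$. Second, and more substantively, the quasilinear metric is \emph{not} handled by a change of spatial variable or gauge that flattens $g(u)$. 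The mechanism in \cite{IT-qnls} (and in this paper) is paradifferential: one writes the frequency-$\lambda$ equation with truncated metric $g_{[<\lambda]}$, runs the density--flux and Morawetz computations in this variable-coefficient setting, and estimates the resulting commutator and $(g-I)$ error terms perturbatively using the unbalanced bilinear $L^2$ bounds themselves. A global coordinate change would not preserve the frequency localization that the whole argument relies on.

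Related to this, your bootstrap is stated on global norms of $u$, whereas the actual argument is run dyadically with frequency envelopes: the bootstrap hypotheses are the frequency-localized bounds \eqref{uk-ee-boot}--\eqref{uab-bi-bal-boot} (adapted to 1D), and the interaction Morawetz is applied to each pair $(u_\lambda,u_\mu)$ separately. This is what allows one to control the paradifferential error terms and to propagate $H^s$ regularity at the sharp threshold $s>1$; a purely global bootstrap would lose the dyadic structure needed to close.
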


Now we turn our attention  to the higher dimensional problem, which differs from the 1D case in two key ways:

\begin{enumerate}
    \item The dispersive decay is stronger in nD, where 
    cubic terms are even perturbative in the semilinear case; this makes the problem easier, and in particular it allows us to discard the defocusing assumption.

    \item There are many more cubic resonant interactions in nD;
this makes the problem harder, and potentially it mostly disables 
normal form/modified energy type arguments.
\end{enumerate}

The first feature above allows us to weaken the hypothesis of the theorem, compared to the $1D$ case, dispensing with both the phase rotation invariance, the conservative assumption and the defocusing condition. Without phase rotation the expansion in 
\eqref{eq-cubic} acquires extra terms,
\begin{equation}\label{eq-cubic2}
i u_t + \Delta u = C(u,\bar u, u)  
+ \text{ non rotation invariant cubic terms} + 
\text{higher order terms},
\end{equation}
where all wave-packet self-interactions are captured by the $C$ 
term. This term does not play any role in higher dimension $n \geq 3$, but we do need to consider its properties in dimension $n = 2$. 
There we introduce the following more  relaxed  conservative condition:
\begin{definition}\label{d:conservative2}
We say that the 2D equation \eqref{qnls}/\eqref{dqnls}
is \emph{conservative} if  the symbol of the cubic, phase rotation invariant  component of the nonlinearity satisfies 
\[
c(\xi,\xi,\xi) \in \R, \qquad \xi \in \R.
\]
\end{definition}

After these preliminaries we are ready to state our global results.
We begin with the higher dimensional case $n \geq 3$, 
where we work at the same regularity level as in the local well-posedness result in Theorem~\ref{t:local3+}: 

\begin{theorem}\label{t:global3}
Let $n \geq 3$.
 a) Assume that the equation \eqref{qnls} is cubic, and that the initial data $u_0$ is small in $H^s$, with $s > \frac{n+1}2$,
 \begin{equation}
 \|u_0\|_{H^s} \leq \epsilon \ll 1.   
 \end{equation}
Then the solutions are global in time,  satisfy the uniform bound 
\begin{equation}
\| u\|_{L^\infty_t H^s_x} \lesssim \epsilon,
\end{equation}
and scatter at infinity, in the sense that
\begin{equation}
u_\infty = \lim_{t \to \infty} e^{-it \Delta} u(t)     \qquad \text{in } H^{s}
\end{equation}
exists and has a continuous dependence on the initial data in the same topology.

b) The same result holds for  the equation \eqref{dqnls} for $s > \frac{n+3}2$. 
\end{theorem}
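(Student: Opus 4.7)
The plan is to establish global bounds by a continuity/bootstrap argument, starting from the local well-posedness of Theorem~\ref{t:local3+}, and closing a bootstrap for a control norm $X^s$ that combines $L^\infty_t H^s_x$ with suitable space-time norms encoding dispersive decay. Since $n\geq 3$ provides ample room above scaling (we are $3/2$ derivatives above $s_c$), the strategy is essentially perturbative: cubic nonlinearities behave like admissible Strichartz perturbations of the free flow once appropriate mixed-norm control is in hand. Concretely, I would take
\[
\|u\|_{X^s}:=\|u\|_{L^\infty_t H^s_x}+\|u\|_{L^2_t B^{s-1/2}_{2n/(n-2),2}}+\|\la D\ra^{s-1}(u\bar u)\|_{L^2_{t,x}},
\]
using the endpoint Strichartz pair $(2,2n/(n-2))$ (available for $n\geq 3$) and a bilinear $L^2$ norm whose provenance is interaction Morawetz.

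The main ingredients are three: (i) \emph{Paradifferential reduction.} Write \eqref{qnls} in paradifferential form, splitting the metric as $g(u)=I+(g(u)-I)$ where $g(u)-I$ is quadratic in $u$; then conjugate by a suitable pseudodifferential / paradifferential multiplier adapted to the geometry of $g(u)$ to eliminate the transport-type coefficients and reduce to a semilinear Schr\"odinger equation $i v_t+\Delta v = F(v,\bar v,\nabla v)$ with $F$ paradifferentially quintic modulo cubic perturbative remainders. (ii) \emph{Energy estimates.} Run $H^s$ energy estimates on the reduced equation and on the linearized flow at the $L^2$ level, controlling the growth of the $L^\infty_t H^s_x$ part of $X^s$ by a quantity involving a time integral of an $L^\infty_x$-type norm of $u$ that can be dominated by the dispersive part of $X^s$ via Sobolev embedding, using that $s-1/2>n/2-n/(2n/(n-2))=n/(n-2)\cdot\ldots$ (the margin available for $n\geq 3$). (iii) \emph{Interaction Morawetz.} Derive a standard interaction Morawetz identity (following Planchon--Vega and Colliander--Keel--Staffilani--Takaoka--Tao) for the variable-coefficient flow; the variable-coefficient perturbation of the identity is harmless because it is quadratic in $u$, giving the bilinear $L^2$-bound for $|u|^2$ and, after differentiation and paradifferential localization, for $\la D\ra^{s-1}(u\bar u)$. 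This is precisely what powers the last norm in $X^s$.

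With these in hand, the bootstrap closes easily for $n\geq 3$: every nonlinear term is at least quartic, so a trilinear H\"older in the $X^s$ norms produces a gain of $\epsilon^2$ which absorbs into the smallness. More explicitly, the worst contribution $(g(u)-I)\Delta u$ is estimated by placing $\Delta u$ in the Strichartz norm, one factor of $u$ in $L^\infty_t H^s_x$ (paid by Sobolev embedding for the $L^\infty_x$ norm), and the remaining $u$ paired with $\Delta u$ into the bilinear $L^2$ norm via Bernstein and a Littlewood--Paley decomposition; the net effect is a bound of the form $\|u\|_{X^s}^3$, closing the bootstrap for $\epsilon$ small enough. For \eqref{dqnls}, which requires two more derivatives of regularity, the same scheme applies after first differentiating the equation to put it in the form of \eqref{qnls}.

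Scattering is then automatic: the integrability in time of the Strichartz and bilinear pieces of $X^s$ implies that $t\mapsto e^{-it\Delta}u(t)$ is Cauchy in $H^s$ as $t\to\infty$, and the Duhamel representation together with the cubic trilinear estimates gives continuous dependence of the asymptotic profile on the data. The main obstacle I expect is the paradifferential reduction step, where one must carefully track the lower-order corrections produced by conjugation in order to verify that they are genuinely perturbative at the regularity level $s>(n+1)/2$, just one half derivative above what the pure symbol calculus would naively require; the interplay between the conjugation gauge and the Morawetz weights is the subtle technical point that distinguishes this from the purely semilinear case.
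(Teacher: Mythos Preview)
Your proposal has a genuine gap, rooted in a miscount of the available room above scaling and in the treatment of the quasilinear structure.

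First, the derivative count: for \eqref{qnls} the scaling exponent is $s_c = n/2$, so $s > (n+1)/2$ is only \emph{one half} derivative above scaling, not $3/2$. This matters because it is precisely what makes the problem delicate; there is not enough slack to run a naive perturbative argument off constant-coefficient Strichartz.

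Second, and more seriously, your step (i) --- conjugating by a paradifferential multiplier to ``reduce to a semilinear Schr\"odinger equation'' --- is not carried out in the paper and is unlikely to work at this regularity. Such renormalizations typically cost derivatives you do not have, and in any case you would still be left with a variable-coefficient principal part. Relatedly, your control norm assumes the endpoint Strichartz bound $L^2_t L^{2n/(n-2)}_x$ for the quasilinear flow as an input; but Strichartz estimates for variable-coefficient Schr\"odinger operators are exactly what is \emph{not} available for free here. The paper is explicit that this is the obstruction separating the quasilinear case from the semilinear one.

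What the paper actually does is the reverse of your hierarchy: it treats the frequency-localized \emph{bilinear} $L^2$ bounds (both balanced and unbalanced, in frequency-envelope form) as the primary bootstrap quantities, and proves them directly for the paradifferential flow $i\partial_t v_\lambda + \partial_j g^{jk}_{[<\lambda]} \partial_k v_\lambda = f_\lambda$ via an interaction Morawetz identity with weight $a(x)=|x|$, carefully controlling the variable-coefficient error terms $\bR^4_1, \bR^4_2$ using only the bootstrapped bilinear bounds themselves (Theorem~\ref{t:para}). The source terms $N_\lambda(u)$ are then shown to satisfy $\|N_\lambda(u)\,\bar u_\lambda^{x_0}\|_{L^1_{t,x}} \lesssim \epsilon^4 c_\lambda^2 \lambda^{-2s}$ (Proposition~\ref{p:N-lambda}) using pairs of bilinear $L^2$ bounds --- not Strichartz. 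For $n\ge 3$ the $L^4_{t,x}$ bound is a \emph{consequence} of the balanced bilinear estimate, not an assumption; full Strichartz with a derivative loss is obtained afterward (Section~\ref{s:Str}) by rewriting the equation as a constant-coefficient one with source $\partial(g-I)\partial v_\lambda$ and estimating that source in $\Ns$ via two unbalanced bilinear bounds. Scattering then follows from the same $\Ns$ bound on the full source $f_\lambda$. Your estimate ``place $\Delta u$ in the Strichartz norm and pair the remaining $u$ with $\Delta u$ into the bilinear $L^2$ norm'' is circular without first establishing those bilinear bounds for the paradifferential flow, which is the real work.
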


Next we have a similar result in two space dimensions, but for a more restrictive class of exponents, namely \blue{$1/2$} derivative above the local well-posedness threshold in Theorem~\ref{t:local2}:

\begin{theorem}\label{t:global2}
Let $n = 2$.
 a) Assume that the equation \eqref{qnls} is cubic, and that the initial data $u_0$ is small in $H^s$, with $s \geq \frac{n+1}2+\blue{\frac12}$,
 \begin{equation}
 \|u_0\|_{H^s} \leq \epsilon \ll 1.   
 \end{equation}
Then the solutions are global in time, satisfy the uniform bound
\begin{equation}
\| u\|_{L^\infty_t H^s_x} \lesssim \epsilon,
\end{equation}
and scatter at infinity, in the sense that
\begin{equation}
u_\infty = \lim_{t \to \infty} e^{-it \Delta} u(t)     \qquad \text{in } H^{s}
\end{equation}
exists and has a continuous dependence on the initial data in the same topology.

b) The same result holds for  the equation \eqref{dqnls} for $s \geq \frac{n+3}2 +\blue{\frac12}$. 
\end{theorem}

Finally, in the low regularity case in dimension $n=2$,
under an additional conservative assumption, we are able to bring the global well-posedness result 
to the same regularity level as the local well-posedness result:

\begin{theorem}\label{t:global2c}
Let $n = 2$.
 a) Assume that the equation \eqref{qnls} is cubic and conservative, and that the initial data $u_0$ is small in $H^s$, with $s > \frac{n+1}2$,
 \begin{equation}
 \|u_0\|_{H^s} \leq \epsilon \ll 1.   
 \end{equation}
Then the solutions are global in time, satisfy the uniform bound
\begin{equation}
\| u\|_{L^\infty_t H^s_x} \lesssim \epsilon,
\end{equation}
and scatter at infinity, in the sense that
\begin{equation}
u_\infty = \lim_{t\to \infty} e^{-it \Delta} u(t)     \qquad \text{in } H^{s}
\end{equation}
exists and has a continuous dependence on the initial data in the same topology.

b) The same result holds for  the equation \eqref{dqnls} for $s > \frac{n+3}2 $. 
\end{theorem}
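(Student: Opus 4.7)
The plan is to set up a continuity/bootstrap argument, using as starting point the local solutions provided by Theorem~\ref{t:local2}. We propagate three families of controls globally in time: the uniform $H^s$ energy, the $L^4_{t,x}$ Strichartz norm (which is the sharp Strichartz endpoint in dimension $n=2$), and suitable dyadic bilinear $L^2_{t,x}$ norms of the form $\|P_j u \cdot P_k u\|_{L^2_{t,x}}$ for transverse frequency localizations. The bilinear norms play the role they played in \cite{IT-qnls}, and will be the workhorse: they are established via an interaction Morawetz identity tailored to the variable-coefficient operator $g(u)\Delta$, with the conservation of mass providing the necessary positivity after a careful paradifferential symmetrization.

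Once the bilinear $L^2$ bounds are in hand, the $L^4_{t,x}$ Strichartz norm follows via a Littlewood--Paley square-function argument combined with bilinear-to-linear $TT^*$; the remaining paraproduct errors are absorbed into the bootstrap using the bilinear controls themselves. For the $H^s$ energy estimate we construct a modified energy
\[
E^\sharp_s(u) = \|u\|_{H^s}^2 + Q_4(u, \bar u) + \lot,
\]
where the quartic correction $Q_4$ arises from a normal-form transformation designed to eliminate all \emph{non-resonant} cubic contributions to $\frac{d}{dt}\|u\|_{H^s}^2$. The resonant set in 2D is the rectangle locus $(\xi_2-\xi_1) \cdot (\xi_2 - \xi_3) = 0$, which splits naturally into the transverse part (a true rectangle, where $\xi_2 - \xi_1$ and $\xi_2 - \xi_3$ are both nontrivial and orthogonal) and the diagonal part $\xi_1 = \xi_2 = \xi_3 = \xi$. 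The transverse contribution is exactly where the bilinear $L^2$ bound is available with a full off-diagonal gain, and it is absorbed into the energy inequality using the bootstrap. The diagonal contribution survives the normal form and is precisely the wave-packet self-interaction whose symbol is $c(\xi,\xi,\xi)$; the conservative assumption, Definition~\ref{d:conservative2}, forces this symbol to be real, so the diagonal contribution to $\frac{d}{dt} E^\sharp_s$ vanishes upon taking the real part. This yields a differential inequality
\[
\frac{d}{dt} E^\sharp_s(u) \lesssim \|u\|_{L^4_x}^4 \cdot E^\sharp_s(u) + (\text{bilinear errors}),
\]
which combined with global $L^4_{t,x}$ control closes the bootstrap and propagates the smallness of the $H^s$ norm.

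The hard part is the 2D rectangle-versus-diagonal analysis. In dimension $n \geq 3$ the bilinear $L^2$ gain from transversality is strong enough to absorb \emph{all} of the cubic resonant contributions, including the diagonal, without any structural hypothesis; this is what makes Theorem~\ref{t:global3} structure-free. In dimension $n=2$ the bilinear gain is only marginal, and one sees this deficit directly in the $\tfrac14$ derivative buffer appearing in Theorem~\ref{t:global2}. Eliminating that buffer requires two ingredients in tandem: first, a precise microlocal decomposition isolating the diagonal locus $\xi_1=\xi_2=\xi_3$ from the truly transverse rectangle locus, so that the weak 2D bilinear bound is only invoked off the diagonal; and second, an exact cancellation of the diagonal piece using the conservative condition, in analogy with the 1D argument of \cite{IT-qnls}, but with the additional complication that in 2D the conservative condition is imposed only on the diagonal $\xi_1=\xi_2=\xi_3=\xi$ and not on its derivatives, so the argument is more rigid and demands a tighter normal-form construction. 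With the energy estimate closed, scattering follows by a standard Duhamel argument: uniform $L^\infty_t H^s \cap L^4_{t,x}$ control allows one to extract $u_\infty = \lim_{t \to \infty} e^{-it\Delta} u(t)$ in $H^s$, with continuous dependence on the initial data inherited from the continuity of the flow map. The argument for \eqref{dqnls} at $s > \frac{n+3}{2}$ is entirely parallel once one differentiates the equation and applies the \eqref{qnls} analysis to $\partial u$.
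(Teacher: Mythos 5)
Your overall scaffolding (a bootstrap on energy, bilinear $L^2$, and $L^4$-type bounds, with an interaction Morawetz identity as the engine) matches the paper, but the step you rely on to remove the $1/4$-derivative buffer --- a normal-form modified energy $E^\sharp_s$ in which all nonresonant cubic contributions are eliminated and the surviving ``diagonal'' piece is killed because $c(\xi,\xi,\xi)\in\R$ --- is precisely the strategy the paper argues is unavailable in 2D. The resonant set consists of all rectangles, and the dangerous region is not the exact diagonal $\xi_1=\xi_2=\xi_3$ but its neighborhood: nearly degenerate rectangles where the normal-form denominator $\Delta^4\xi^2$ is small (so your quartic correction $Q_4$ loses derivatives), where there is no frequency separation so the transversal bilinear gain is absent, and where the symbol is not real --- the conservative hypothesis of Definition~\ref{d:conservative2} gives vanishing of $\Im c$ only on the measure-zero diagonal, so ``taking the real part'' kills nothing off it. The paper's substitute is quantitative rather than a cancellation: the diagonal vanishing is used to factor the quartic density symbol as $(\xi_{odd}-\xi_{even})\,q+\Delta^4\xi\, r$, which converts $C^{4,bal}_{\lambda,m}$ into an $L^1_{t,x}$ part controlled by the \emph{balanced} bilinear bound on $\| |D|^\frac12 |u_\lambda|^2\|_{L^2}$ (an estimate valid for parallel interactions, coming from Morawetz positivity, not from transversality) plus a spatial divergence, i.e.\ a flux correction (Proposition~\ref{p:bal-sources}); the nonresonant balanced part is removed by corrections to both the density and the flux (Proposition~\ref{p:nr-sources}), not by a correction to $\|u\|_{H^s}^2$ alone.

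Even granting your energy estimate, the bootstrap does not close, because the bilinear bounds \eqref{uab-bi-bal}, \eqref{uab-bi-unbal} must themselves be re-proved in the presence of the balanced cubic source terms, which at $s$ just above $\frac32$ are not perturbative in the density-flux identities: the paper runs the interaction Morawetz argument with the modified densities $\ms_\lambda,\ps_\lambda$ and corrected fluxes, and then estimates the resulting extra interaction term $\bJ^{6}_{\lambda\mu}$ perturbatively; your proposal leaves the Morawetz step untouched and only modifies the energy, so the bilinear bootstrap is never recovered. Relatedly, your Gr\"onwall inequality $\frac{d}{dt}E^\sharp_s\lesssim \|u\|_{L^4_x}^4\,E^\sharp_s$ presupposes an $L^4_{t,x}$ bound strong enough to absorb the two derivatives in the balanced cubic terms; in 2D the lossless bound is only $L^4_tL^8_x$, and the global $L^4_{t,x}$ bound carries a $\frac12$-derivative loss, so this direct route is exactly what produces the $s\geq \frac74$ threshold of Theorem~\ref{t:global2} and cannot by itself reach $s>\frac32$. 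The scattering and \eqref{dqnls} reductions are fine, but the core low-regularity mechanism needs to be replaced by the factorization-plus-density/flux-correction scheme fed back into the interaction Morawetz functional.
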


As noted earlier, these results represent the first global well-posedness results  for general cubic quasilinear Schr\"odinger equations with small initial data in Sobolev spaces
in dimension $n \geq 2$.  Indeed, all prior  global results are of following two types:
\begin{enumerate}
    \item Problems with initial data which is not only small but also \emph{smooth and localized}, see for instance  Ginibre-Hayashi~\cite{MR1340853}, Hayashi-Naumkin~\cite{MR1856255} and 
    Bernal-V\'{\i}lchis-Hayashi-Naumkin.
    \cite{MR2811056}.
    \item Specific problems with additional structure in higher dimension $n \geq 3$, see e.g. Huang-Li-Tataru~\cite{HLT}.
\end{enumerate}

The contrast is even more stark in dimension $n=2$, where 
as far as we are aware there is no global result known for any 
cubic quasilinear dispersive problem with small but non-localized initial data. 

We continue with two remarks about the scattering result:

\begin{remark}
One should compare the results on scattering with 
our earlier 1D results in \cite{IT-qnls}. In the 1D
case, the cubic nonlinear effects are too strong,  \blue{preventing} any standard scattering; even for localized data, the best one could hope fore is some form of modified scattering.  By contrast, in two and higher dimension we 
have full classical scattering.
\end{remark}

\begin{remark}
A natural question here is to study the regularity of the 
wave operator $u_0 \to u_\infty$. Our analysis implies that it has properties akin to the (quasilinear) Hadamard well-posedness:
\begin{itemize}
    \item it is continuous in $H^s$
    \item it is Lipschitz continuous\footnote{When restricted to small $H^s$ data.} in a weaker topology 
    (e.g. $L^2$), and in effect close to the identity in the Lipschitz topology.
\end{itemize}
    
\end{remark}

We note that the above results are stated here in a short form, but  the proofs show in effect that the solutions satisfy several types of estimates:

\begin{enumerate}[label=(\roman*)]
\item{Bilinear $L^2_{t,x}$ bounds}, which can be stated in a balanced form
\begin{equation} \label{intro-bi-bal}
   \| |D|^\frac{3-n}2 |\la D\ra^{s-\frac14} u|^2\|_{L^2_{t,x}} \lesssim \epsilon^2 ,  
\end{equation}  
and in an imbalanced form, using standard paraproduct notation,
\begin{equation}\label{intro-bi-unbal}
   \| T_{\partial u} \bu\|_{L^2_t H^{s+\frac12}_x} \lesssim \epsilon^2  .
  \end{equation} 

    \item Lossless $L^{4}_{t,x}$ Strichartz bounds for $n \geq 3$:
  \begin{equation}\label{intro-Str3}
   \| \la D\ra^{s-\frac{2-n}4} u\|_{L^4_{t,x}} \lesssim \epsilon  . 
  \end{equation}  

    \item Lossless $L^{4}_t L^8_x$ Strichartz bounds for $n = 2$
  \begin{equation}\label{intro-Str2}
   \| \la D\ra^{s-\frac14} u\|_{L^4_t L^8_x} \lesssim \epsilon .  
  \end{equation}  
   \item Full Strichartz bounds with derivative loss:
  \begin{equation}\label{intro-Str-full}
   \| \la D\ra^{s-\frac{2}{p}} u\|_{L^{p}_t L^{q}_x} \lesssim \epsilon
   \end{equation}  
for any pair $(p,q)$ of sharp Strichartz exponents.
\end{enumerate}

\medskip

The reader is referred to Theorem~\ref{t:local-fe3} and Theorem~\ref{t:local-fe2} for a more complete, frequency envelope based description of the results.
One may  also compare the $L^4_{t,x}$  bounds above  with the $L^6_{t,x}$ bounds which  play the leading role in the 1D case, where $6$ is the sharp Strichartz exponent.

\bigskip
One may also frame our results within the broader question of obtaining long time solutions for one or two dimensional dispersive flows with quadratic/cubic nonlinearities, which has attracted much of attention in recent years. One can distinguish two different but closely related types of results that have emerged, as well as several successful approaches.

On the one hand, \emph{normal form methods} have been developed 
in order to extend the lifespan of solutions, beginning with 
\cite{Shatah} in the late '80's. Somewhat later, around 2000, the \emph{ I-method}, introduced in \cite{I-method} brought forth the idea of constructing  better almost conserved quantities. These two ideas serve well in the study of semilinear flows, where it was later understood that they are connected ~\cite{Bourgain-nf}.

Neither of these techniques can be directly applied to quasilinear problems. To address this problem, it was discovered in the work of the authors and collaborators \cite{BH}, \cite{IT-g} that one could adapt the normal form method to quasilinear problems by constructing energies that simultaneously capture both the quasilinear and normal form structures. This idea has been called the \emph{modified energy method}, and can also be seen in some sense as a quasilinear adaptation of the I-method. Other alternate approaches, also in the quasilinear setting, are provided by the \emph{ flow method } of  Hunter-Ifrim~\cite{hi}, where a better nonlinear normal form transformation is constructed using a well-chosen auxiliary flow, and by the paradiagonalization method of Alazard and-Delort \cite{AD}, where a paradifferential symmetrization is applied instead.

Further, the question of obtaining scattering, global in time solutions for one or two dimensional dispersive flows with quadratic/cubic nonlinearities  has also been extensively studied in the last two decades  for a number of models, under the assumption that the initial data is both \emph{small} and \emph{localized};  for a few  examples among many, see for instance \cite{HN,HN1,LS,KP,IT-NLS} in one dimension, and \cite{GMS} in two dimensions.
The nonlinearities in the one-dimensional models are primarily cubic, though the analysis has also been extended via normal form and modified energy methods to problems which also have nonresonant quadratic interactions; several such examples are \cite{AD,IT-g,D,IT-c,LLS}, see also further references therein, as well as the authors' expository paper \cite{IT-packet}.
On the other hand some quadratic nonlinearities can also be handled, see for instance \cite{GMS}, based on the idea of \emph{space-time} resonances.

Comparing the above class of results where the initial data
is both small and localized with the present results, without any localization assumption, it is clear that in the latter case the problem becomes much more difficult, because the absence of localization allows for far stronger nonlinear interactions over long time-scales. Another twist in the one dimensional setting was that one also needs to distinguish between the focusing and defocusing case, as was made clear in our  in our earlier  semilinear work in \cite{IT-global} and \cite{IT-focusing}, and then quasilinear work in \cite{IT-qnls}.

\subsection{Outline of the paper} \
While the short form of the results provided in the introduction
represents a good starting point, from the perspective of both 
proving these results and of understanding the global in time dispersive properties of the solutions, it becomes essential 
to have an appropriate family of quantitative bounds for the solutions. As briefly indicated with the bounds \eqref{intro-bi-bal}-\eqref{intro-Str-full}, these bounds are roughly of two types:

\begin{description}
    \item[Strichartz estimates] these represent the more classical bounds one can obtain, and are well-known in the constant coefficient case, though proving such bounds in quasilinear 
    settings becomes anything but straightforward.
    \item[Bilinear $L^2_{t,x}$ estimates] these play the leading role here, and should be seen as transversality bounds, i.e., essentially capturing the bilinear interaction of two transversal waves.   Again these are easily proved in linear, constant coefficient settings, but are not at all straightforward in any quasilinear context.
\end{description}

As it is often the case in the study of quasilinear problems,
proving such estimates requires having a considerable amount of information about the solutions to start with, which is why the entire proof of the results is wrapped within a carefully designed bootstrap argument, where, to start with, one assumes that a weaker version of such estimates already holds, not only at the linear level but also at the bilinear level. 

While the bootstrap requirements make it a necessity that the 
main line of our argument cannot  be closed until the last section, 
we are still able to organize the proofs in a modular fashion,
with the only proviso that the bootstrap assumptions are imposed at each step, but not fully recovered until the very end. These
modules are as follows:

\medskip

\textbf{I. Littlewood-Paley theory and frequency envelopes.} 
While the estimates \eqref{intro-bi-bal}-\eqref{intro-Str-full}
serve to give the reader an idea of our overall setup, for our proofs it is far better for our proofs to have a more precise accounting of these estimates relative to the size of the frequency.. This is  achieved first by using a Littlewood-Paley decomposition of the solutions, 
and second, by using the language of frequency envelopes in order
to track the size of the dyadic pieces in both linear and bilinear bounds. This is described in Section~\ref{s:boot}, where we also provide sharper, frequency envelope versions of our estimates 
for the solutions, as well as the setup of the global bootstrap argument. 

\medskip

\textbf{II. The paradifferential formalism.} As it is standard
in the study of nonlinear evolutions, in order to prove local well-posedness we need good estimates not only for the solution itself, but also for the corresponding linearized equation. As it turns out, rather then study each of these two equations separately, it is better to  seek a common denominator; this is exactly be the associated linear paradifferential flow, which will indeed play
the leading role.   In Section~\ref{s:lin} we introduce all these related flows, and recast both the full equation and the linearized equation as paradifferential flows with source terms. These source terms should ideally play a perturbative role, and indeed they do with two provisos: (i) that we have a suitable notion of ``perturbative" which is sufficiently broad, and (ii) that in the most difficult case of two dimensional flows at low regularity  we give special consideration to the cubic balanced terms.

\medskip

\textbf{III. Conservation laws  in density-flux form.}
Efficient energy estimates must necessarily play an important role in our analysis, but for the proof of the bilinear estimates we need to consider these estimates in a  frequency localized manner,
and, more importantly, as local conservation laws in density-flux form. This is discussed primarily in Section~\ref{s:df}, and 
refined further in the last section for conservative 2D flows.

\medskip

\textbf{IV. Interaction Morawetz bounds.} These represent the primary tool in the proof of the crucial bilinear $L^2_{t,x}$ bounds, and originate in work of the I-team in \cite{MR2415387}. However our starting point is provided by the work of Planchon-Vega~\cite{PV} for the constant coefficient case. We modify and adapt this for our purposes
in Section~\ref{s:para}, following the ideas developed in our prior one dimensional work in \cite{IT-qnls}. However, compared to one dimension, the higher dimensional case turns out to be more difficult due to a certain lack of coercivity, which considerably increases the burden of  
estimating ``perturbative" contributions.

\medskip

\textbf{V. Strichartz estimates in the paradifferential world.}
A direct proof of lossless Strichartz estimates for the paradifferential was possible in \cite{IT-qnls} in the 1D case locally in time, but it becomes a daunting task in higher dimensions. This is partially remedied by the stronger bilinear bounds in the balanced case, which in particular give an essential scale invariant $L^4_{t,x}$ bound in dimension three and higher. But in 2D this is no longer the case, so in Section~\ref{s:Str} we instead develop a strategy to prove the Strichartz estimates for a full range of exponents, globally in time, but with a loss of derivatives.

\medskip

\textbf{VI. Rough solutions as limits of smooth solutions.}
This is by now a standard step in obtaining rough solutions for quasilinear flows, which is best carried out under the umbrella 
of frequency envelopes. We do this in Section~\ref{s:rough}, 
following the path described in our earlier expository paper \cite{IT-primer}.

\medskip

\textbf{VII. The 2D global result at low regularity.}
While cubic balanced terms can be seen as perturbative in dimension three and higher, this is no longer the case in two space dimensions, particularly at low regularity. This is where our ``conservative" assumption plays an important role, which is investigated in the last section of the paper.

\subsection{Acknowledgements} 
The first author was supported by the Sloan Foundation, and by an NSF CAREER grant DMS-1845037. The second author was supported by the NSF grant DMS-2054975 as well as, by a Simons Investigator grant from the Simons Foundation. The authors also thank Ben Pineau for carefully reading the manuscript and pointing out a number of typos/corrections.

 \section{Notations and preliminaries}
\label{s:notations}

 Here we introduce the usual Littlewood-Paley decomposition, 
and discuss some standard facts for constant coefficient linear 
Schr\"odinger flows, including the Strichartz and bilinear $L^2_{t,x}$ estimates, as well as the notion of cubic resonant interactions.
Finally, we describe our notations for multilinear forms and their symbols.

\subsection{ The Littlewood-Paley decomposition}
We use a standard 
 Littlewood-Paley decomposition in frequency: let $\psi$ be a bump function adapted to $[-2,2]$ and equal to $1$ on
$[-1,1]$.  We define the Littlewood-Paley operators $P_{k}$, as well as
and
$P_{\leq } = P_{<k+1}$ for  $k \in \N$ by defining
\[
\widehat{P_{\leq k} f}(\xi) := \psi(2^{-k} \xi) \hat f(\xi)
\]
with $P_1:= P_{\leq 1}$
 and $P_k := P_{\leq k} - P_{\leq k-1}$ for $\lambda \geq 2$.  All the operators $Pk$,
$P_{\leq k}$ are bounded on all translation-invariant Banach spaces, thanks to Minkowski's inequality.  We also define $P_{>k} := P_{\geq k-1}
:= 1 - P_{\leq k}$.

Thus
\begin{equation*}
1=\sum_{k \in \mathbf{N}}P_{k},
\end{equation*}
where the multipliers $P_k$ have smooth symbols localized at frequency $2^k$. Correspondingly, our solution $u$ will be decomposed as
\[
u = \sum_{k \in  \N} u_k, \qquad \mbox{ where } \quad  u_k := P_k u. 
\]
The main estimates we will establish for our solution $u$
will be linear and bilinear estimates for the functions $u_k$.

 In the proof of the local well-posedness result in Section~\ref{s:rough} it will be convenient to switch from a discrete to a continuous Littlewood-Paley decomposition. There we think of $k$ 
as a real nonnegative parameter and define 
\[
P_k u := \frac{d}{dk} P_{< k} u,
\]
and our Littlewood-Paley decomposition for $u$
becomes
\begin{equation}\label{lp-cont}
u = u_0 +\int_0^\infty u_k \, dk.
\end{equation}

To shorten some calculations we will also use Greek indices
for Littlewood-Paley projectors, e.g. $P_\lambda$, $P_\mu$
where we take $\lambda,\mu \in 2^{\N}$. Correspondingly, our Littlewood-Paley decomposition of a function $u$ will read
\[
u = \sum_{\lambda \in  2^\N} u_\lambda, \qquad u_\lambda = P_\lambda u. 
\]

\subsection{Strichartz and bilinear $L^2_{t,x}$ bounds}
Here we begin by recalling  the  Strichartz inequalities, 
which apply to solutions to the inhomogeneous linear Schr\"odinger equation:
\begin{equation}\label{bo-lin-inhom}
(\partial_t + \Delta)u = f, \qquad u(0) = u_0.
\end{equation}
These are $L^p_t L^q_x$ bounds which are used as measures of the dispersive effect. The allowed exponents $(p,q)$ depend on the dimension:

\begin{definition}
The pair $(p,q)$ is a standard Strichartz exponent in $n$ space dimensions if
\begin{equation}
\frac{2}{p}+\frac{n}q = \frac{2}{n}, \qquad 2 \leq p,q \leq \infty    \end{equation}
with the only exception of the forbidden endpoint $(2,\infty)$ 
in dimension $n=2$.
\end{definition}
With this definition, the Strichartz estimates in the $L^2$ setting are summarized in the following

\begin{lemma}
Assume that $u$ solves \eqref{bo-lin-inhom} in $[0,T] \times \R$. Then the following estimate holds for all sharp Strichartz pairs $(p,q), (p_1,q_1)$:
\begin{equation}
\label{strichartz}
\| u\|_{L^p_t L^q_x} \lesssim \|u_0 \|_{L^2} + \|f\|_{L^{p'_1}_t L^{q'_1}_x}.
\end{equation}
\end{lemma}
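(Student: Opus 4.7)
The plan is to reduce the inhomogeneous estimate to a combination of a dispersive decay bound, a $TT^*$ argument for the homogeneous propagator, and the Christ--Kiselev lemma (plus the Keel--Tao endpoint device in $n\geq 3$). The starting point is the standard pointwise decay for the free Schr\"odinger kernel: the propagator $e^{-it\Delta}$ has kernel $c_n |t|^{-n/2} e^{i|x-y|^2/4t}$, hence
\[
\|e^{-it\Delta} g\|_{L^\infty_x}\lesssim |t|^{-n/2}\|g\|_{L^1_x},
\qquad
\|e^{-it\Delta} g\|_{L^2_x}=\|g\|_{L^2_x}.
\]
Complex interpolation between these two bounds yields, for every $q\in[2,\infty]$,
\[
\|e^{-it\Delta} g\|_{L^q_x}\lesssim |t|^{-n(\tfrac12-\tfrac1q)}\|g\|_{L^{q'}_x}.
\]

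Next I would establish the homogeneous estimate $\|e^{-it\Delta} u_0\|_{L^p_tL^q_x}\lesssim\|u_0\|_{L^2}$ for every sharp Strichartz pair $(p,q)$, using the classical $TT^*$ argument. Setting $Tu_0(t,x)=e^{-it\Delta}u_0(x)$, the adjoint yields $TT^*F(t)=\int e^{-i(t-s)\Delta}F(s)\,ds$, whose kernel inherits the decay above in $|t-s|$. For non-endpoint admissible $(p,q)$ with $\tfrac{2}{p}+\tfrac{n}{q}=\tfrac{n}{2}$, Hardy--Littlewood--Sobolev in the time variable converts this decay into the required $L^{p'}_tL^{q'}_x\to L^p_t L^q_x$ bound; by duality this is equivalent to $T:L^2\to L^p_tL^q_x$.

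For the inhomogeneous equation \eqref{bo-lin-inhom}, Duhamel's formula gives
\[
u(t)=e^{-it\Delta}u_0\;-\;i\int_0^t e^{-i(t-s)\Delta}f(s)\,ds.
\]
The first term is bounded by the homogeneous estimate. For the retarded integral, the non-retarded analogue $\int_\R e^{-i(t-s)\Delta}f(s)\,ds$ is controlled by composing the adjoint and forward Strichartz bounds, producing the inequality with the pair $(p_1,q_1)$ on the right. One then removes the indicator $\mathbf{1}_{s<t}$ via the Christ--Kiselev lemma, which is applicable whenever $p>p_1'$; the remaining diagonal case $p=p_1'$ follows directly without Christ--Kiselev from the self-adjoint $TT^*$ bound.

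The one genuinely delicate point is the double endpoint $p=p_1=2$, $q=q_1=\tfrac{2n}{n-2}$ in dimension $n\geq 3$, which is excluded from the Christ--Kiselev step and also forbidden in $n=2$ (the paper's definition explicitly rules out $(2,\infty)$ there). Here I would invoke the Keel--Tao endpoint argument: decompose $|t-s|\sim 2^j$ dyadically, apply bilinear interpolation on each piece between the dispersive bound and the energy bound, and sum the resulting geometric series in $j$. This is the main obstacle in the sense that it is the only step requiring more than soft arguments, but it is entirely classical and the result is well-known, so no new ideas are needed.
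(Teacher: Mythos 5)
Your proposal is correct: it is the classical dispersive-estimate/$TT^*$/Hardy--Littlewood--Sobolev argument for the homogeneous and non-retarded bounds, Christ--Kiselev to pass to the retarded Duhamel integral when $p>p_1'$, and the Keel--Tao dyadic bilinear interpolation at the double endpoint. The paper gives no proof of this lemma at all --- it simply cites \cite{Ke-Ta} --- so your sketch is precisely the standard argument the paper is implicitly invoking, and I see no gap in it.
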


 For further details and references we direct the reader to \cite{Ke-Ta}, where the final endpoint $p=2$ was proved.

 For practical purposes it is useful to place all 
 these estimates under a single umbrella. In dimension three and higher we have access to both end-points, and it is most efficient 
 to  define the Strichartz space $S$ associated to the $L^2$ flow by
\[
S = L^\infty_t L^2_x \cap L^2_t L^{\frac{2n}{n-2}}_x,
\]
as well as its dual 
\[
S' = L^1_t L^2_x + L^{2} _t L^{\frac{2n}{n+2}}_x .
\]

Then the Strichartz estimates can be summarized all as 
\begin{equation}
\label{strichartz3}
\| u\|_{S} \lesssim \|u_0 \|_{L^2} + \|f\|_{S'}.
\end{equation}

In two space dimensions we lose access to the $(2,\infty)$ endpoint
so the above definition of the Strichartz space $S$ no longer applies. Instead, it is convenient to use the $U^p_\Delta$ and $V^p_\Delta$ spaces. These were introduced in unpublished work of the second author~\cite{T-unpublished}, see also \cite{KT} and \cite{HTT} for some of the first applications of these spaces. In this setting the 
Strichartz estimates can be summarized as
\begin{equation}
\label{strichartz2}
\| u\|_{V^2_\Delta} \lesssim \|u_0 \|_{L^2} + \|f\|_{DV^2_\Delta},
\end{equation}
where the transition to the Strichartz bounds is provided by the embeddings
\begin{equation}\label{UV-embed}
V^2_\Delta \subset U^p_\Delta \subset L^p_t L^q_x,
\qquad L^{p'}_t L^{q'}_x \subset DV^{p'}_\Delta \subset DV^2_\Delta.
\end{equation}

\medskip
The last property of the linear Schr\"odinger equation which is important here is the bilinear $L^2$ estimate, which is as follows:

\begin{lemma}
\label{l:bi}
Let $u^1$, $u^2$ be two solutions to the inhomogeneous Schr\"odinger equation with data 
$u^1_0$, $u^2_0$ and inhomogeneous terms $f^1$ and $f^2$. Assume 
that $u^1$ and $u^2$ are frequency localized in balls $B_1$, $B_2$
with radius $\lambda_1 \lesssim  \lambda_2$ so that  
\[
d(B_1,B_2) \gtrsim \lambda_2.
\]
 Then we have 
\begin{equation}
\label{bi-di}
\| u^1 u^2\|_{L^2_{t,x}} \lesssim \lambda_1^{\frac{n-1}{2}} \lambda_2^{-\frac12} 
( \|u_0^1 \|_{L^2_x} + \|f^1\|_{S'}) ( \|u_0^2 \|_{L^2_x} + \|f^2\|_{S'}).
\end{equation}
\end{lemma}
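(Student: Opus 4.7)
The plan is to first prove the bound in the homogeneous case $f^1 = f^2 = 0$ by direct Fourier analysis exploiting the transversality of two spectrally separated Schr\"odinger waves, and then extend to the inhomogeneous setting via a standard bilinear $TT^*$/Christ--Kiselev argument (or, uniformly in the dimension, by passage to the $V^2_\Delta$ framework).

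For the homogeneous step, writing $u^j(t,x) = e^{it\Delta} u_0^j(x)$, the space-time Fourier transform of the product is supported on the characteristic set and, up to sign conventions,
\[
\widehat{u^1 u^2}(\tau,\xi) = \int_{\R^n} \hat u_0^1(\eta)\, \hat u_0^2(\xi - \eta)\, \delta\bigl(\tau - \phi_\xi(\eta)\bigr)\, d\eta, \qquad \phi_\xi(\eta) := |\eta|^2 + |\xi - \eta|^2,
\]
i.e.\ a surface integral over $\Sigma_{\tau,\xi} := \{\phi_\xi = \tau\}$ against the natural measure $d\sigma/|\nabla\phi_\xi|$. Two geometric facts drive the estimate: first, $|\nabla\phi_\xi(\eta)| = 2|\eta - (\xi - \eta)| \gtrsim \lambda_2$ by the separation hypothesis $d(B_1,B_2) \gtrsim \lambda_2$, so the level surface is transverse at scale $\lambda_2$; second, the portion of $\Sigma_{\tau,\xi}$ contributing to the integrand lies in $B_1$, so $\sigma(\Sigma_{\tau,\xi} \cap B_1) \lesssim \lambda_1^{n-1}$. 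Applying Cauchy--Schwarz on $\Sigma_{\tau,\xi}$ against $d\sigma/|\nabla\phi_\xi|$ and combining these bounds,
\[
|\widehat{u^1 u^2}(\tau,\xi)|^2 \lesssim \frac{\lambda_1^{n-1}}{\lambda_2}\int_{\Sigma_{\tau,\xi}} |\hat u_0^1(\eta)\, \hat u_0^2(\xi - \eta)|^2\, \frac{d\sigma}{|\nabla\phi_\xi|}.
\]
Integrating over $(\tau,\xi)$ and converting $d\sigma\, d\tau/|\nabla\phi_\xi|$ into Lebesgue measure $d\eta$ by the coarea formula, together with Plancherel, yields
\[
\|u^1 u^2\|_{L^2_{t,x}}^2 \lesssim \frac{\lambda_1^{n-1}}{\lambda_2}\, \|u_0^1\|_{L^2}^2\, \|u_0^2\|_{L^2}^2,
\]
which is the desired inequality in the homogeneous case.

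For the inhomogeneous extension, I would decompose each $f^j \in S'$ according to the summand structure $S' = L^1_t L^2_x + L^{p'_1}_t L^{q'_1}_x$. The $L^1_t L^2_x$ piece is absorbed by Minkowski's inequality: writing $\int_0^t e^{i(t-s)\Delta} f^j_1(s)\, ds$ and applying the homogeneous bilinear bound slice-wise in $s$ converts $\|f^j_1\|_{L^1_t L^2_x}$ into the role of $\|u_0^j\|_{L^2}$. The remaining $L^{p'_1}_t L^{q'_1}_x$ contribution is treated by a bilinear $TT^*$ argument combined with the Christ--Kiselev lemma, applicable since the relevant time exponent $p_1 > 2$. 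In the two-dimensional case one instead passes through the $V^2_\Delta$ framework noted after \eqref{strichartz2}: the atomic $U^2_\Delta$ decomposition transfers the homogeneous bilinear estimate directly to $V^2_\Delta$ solutions, and the embedding $\|u^j\|_{V^2_\Delta} \lesssim \|u_0^j\|_{L^2} + \|f^j\|_{DV^2_\Delta}$ closes the bound.

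The main obstacle I anticipate is the careful bookkeeping in the homogeneous surface-measure bound: one must verify uniformly in $(\tau,\xi)$ that the smaller ball $B_1$ controls the $(n-1)$-area of $\Sigma_{\tau,\xi}$, and that the transversality $|\nabla\phi_\xi| \gtrsim \lambda_2$ holds on the entire support of the integrand. The inhomogeneous step is conceptually routine but does rely on the non-endpoint Christ--Kiselev machinery (or equivalently $U^2/V^2$ duality) in order to handle the $L^2$-in-time side of $S'$ without any loss.
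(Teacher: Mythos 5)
The paper itself offers no proof of this lemma --- it is quoted as a standard bilinear Strichartz estimate --- so the only comparison available is with the standard argument, and your homogeneous step is exactly that argument, done correctly: the space-time Fourier transform of $u^1u^2$ is a surface integral over the sphere $\{|\eta|^2+|\xi-\eta|^2=\tau\}$ (up to signs), transversality $|\nabla\phi_\xi(\eta)|=2|\eta-(\xi-\eta)|\gtrsim\lambda_2$ follows from $d(B_1,B_2)\gtrsim\lambda_2$, the surface area inside $B_1$ is $\lesssim\lambda_1^{n-1}$, and Cauchy--Schwarz plus the coarea formula and Plancherel give \eqref{bi-di} with $f^1=f^2=0$. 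The $L^1_tL^2_x$ part of the inhomogeneity via Minkowski is also fine.

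The gap is in the remaining inhomogeneous piece. With the paper's definition of $S'$ in dimension $n\geq 3$, the second summand is the \emph{endpoint} dual Strichartz space $L^2_tL^{\frac{2n}{n+2}}_x$, i.e.\ $p_1'=2$; since the bilinear output is also measured in $L^2_t$, the Christ--Kiselev lemma does not apply to it, so your justification ``applicable since the relevant time exponent $p_1>2$'' fails precisely for the space appearing in the statement (it is correct only for non-endpoint pairs, $p_1'<2$). The fallback you propose (and invoke only for $n=2$), transferring the free estimate through $U^2_\Delta/V^2_\Delta$, is also not automatic: atomic decomposition transfers the homogeneous bilinear bound to $U^2_\Delta\times U^2_\Delta$, but $V^2_\Delta\not\subset U^2_\Delta$, so allowing $f\in DV^2_\Delta$ requires either the $U^2$--$U^p$ interpolation lemma of Hadac--Herr--Koch, which in the balanced separated regime $\lambda_1\approx\lambda_2$ produces a logarithmic loss unless extra room is exhibited, or the embedding $L^2_tL^{\frac{2n}{n+2}}_x\subset DU^2_\Delta$, which amounts to an endpoint Strichartz estimate for $V^2_\Delta$ and is not available by these soft arguments. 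So as written the inhomogeneous step does not close at the endpoint; it does close (by your Minkowski plus Christ--Kiselev scheme) if $S'$ is restricted to $L^1_tL^2_x$ plus non-endpoint dual Strichartz norms, which is in fact all the paper ever uses, but proving the lemma verbatim requires either that restriction to be stated or a genuine endpoint inhomogeneous bilinear argument.
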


This is a classical bound, which is most easily proved for solutions to the homogeneous
equation, where it can be seen as a convolution estimate in the Fourier space. We 
provide this bound here for orientation only, as our Schr\"odinger evolutions have variable coefficients. Nevertheless, it provides us with a guide of what we could hope for in our setting.

\subsection{ Resonant analysis}
\label{s:resonances}
In this subsection we aim to classify the trilinear interactions associated to the linear 
constant coefficient flow; for simplicity, the reader may think 
of the context of an equation of the form
\[
(i\partial_t + \Delta) u = C(\bfu,\bfu,\bfu),
\]
where $C$ is a translation invariant trilinear form and we have used the notation $\bfu = \{ u, \bu\}$ in order to allow for both $u$ and $\bu$ in each of the arguments of $C$. For clarity, we note that this includes expressions of the form
\[
C(u,\bu,u), \quad C(u,u,u), \quad C(u,\bu,\bu), \quad C(\bu,\bu,\bu).
\]
Within these choices we distinguish the first one,  namely $C(u,\bu,u)$, 
as the only one with \emph{phase rotation symmetry}.

Given three input frequencies $\xi^1, \xi^2,\xi^3$ for 
our cubic nonlinearity, the output will be at frequency 
\begin{equation}\label{freq}
\xi^4 = \pm \xi^1 \pm \xi^2 \pm \xi^3,
\end{equation}
where the signs are chosen depending on whether we use the input $u$ or $\bu$. We call such an interaction {balanced} if all $\xi^j$'s are of comparable size, and \emph{unbalanced} otherwise.
This is a \emph{resonant interaction} if and only if we have a similar relation for the associated time frequencies, namely 
\[
|\xi^4|^2 = \pm |\xi^1|^2 \pm |\xi^2|^2 \pm |\xi^3|^2.
\]

\medskip 

On the other hand from the perspective of bilinear estimates we 
seek to distinguish the \emph{transversal interactions}, which are 
defined as those for which no more than two of the frequencies $\xi^j$ coincide. We seek to classify interactions from this perspective:

\begin{description}
\item[(i) With phase rotation symmetry]
Here the relation \eqref{freq} can be expressed in a more symmetric fashion as 
\[
\Delta^4 \xi = 0, \qquad \Delta^4 \xi = \xi^1-\xi^2+\xi^3-\xi^4 .
\]
This is a resonant interaction if and only if we have a similar relation for the associated time frequencies, namely 
\[
\Delta^4 \xi^2 = 0, \qquad \Delta^4 \xi^2 = |\xi^1|^2-|\xi^2|^2+|\xi^3|^2-|\xi^4|^2 .
\]
Hence, we define the resonant set in a symmetric fashion as 
\[
\calR := \{ \Delta^4 \xi = 0, \ \Delta^4 \xi^2 = 0\}.
\]
In 1D it is easily seen that this set may be characterized as
\[
\calR = \{ \{\xi^1,\xi^3\} = \{\xi^2,\xi^4\}\}.
\]
However, in higher dimension this is no longer the case, and the resonant set 
$\calR$ consists of all quadruples $\xi^1$, $\xi^2$, $\xi^3$
and $\xi^4$ which are the vertices of a rectangle, in this order.
Of these, the only non-transversal interactions are given by the diagonal set 
\[
\calR_2 = \{\xi^1=\xi^2=\xi^3 = \xi^4\}.
\]
We will refer to these interactions as the \emph{doubly resonant interactions}.

\item[(ii) Without phase rotation symmetry] Here we concentrate  our attention
on describing the worst case scenario, which corresponds to interactions which are both resonant and non-transversal.
The first requirement corresponds to frequencies where
\[
 \xi^1 \pm \xi^2 \pm \xi^3 \pm \xi^4 = 0,
\]
and 
\[
|\xi^1|^2 \pm |\xi^2|^2 \pm |\xi_3|^2 \pm |\xi^4|^2 = 0.
\]
The second requirement is that three of the frequencies are equal.
Without any loss in generality assume that $\xi^1 = \xi^2 = \xi^3:=\xi$.
Depending on the choice of signs and excluding the case of phase rotation symmetry, we obtain one of the following systems:
\[
\xi^4 = \pm 3 \xi, \qquad |\xi^4|^2 = \pm 3 |\xi|^2,
\]
with matched signs, or
\[
\xi^4 = - \xi, \qquad |\xi^4|^2 = - |\xi|^2.
\]
In both cases the only solution is $\xi = 0$. Hence we have established the following

\medskip 

\begin{lemma} Except for the case $(0,0,0,0)$, all cubic interactions 
without phase rotation symmetry are either nonresonant or transversal.
\end{lemma}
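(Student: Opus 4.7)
The plan is to reduce the statement to a short finite case analysis along the lines already hinted at in the discussion. First I would unpack the term ``non-transversal'': by definition this means that at least three of the four frequencies $\xi^1,\xi^2,\xi^3,\xi^4$ coincide. Since the two relations
\[
\xi^4 = \pm \xi^1 \pm \xi^2 \pm \xi^3, \qquad |\xi^4|^2 = \pm |\xi^1|^2 \pm |\xi^2|^2 \pm |\xi^3|^2
\]
are symmetric under permutation of the four frequencies, with $\xi^4$ being interchangeable with any input after a compatible sign flip, I may assume without loss of generality that the three coinciding frequencies are the three inputs, namely $\xi^1 = \xi^2 = \xi^3 =: \xi$.

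Next I would identify which sign patterns in $\xi^4 = \pm\xi^1 \pm\xi^2 \pm\xi^3$ are excluded by the phase rotation hypothesis. Phase rotation symmetry of a cubic monomial in $u$ and $\bar u$ requires that the net charge (number of $u$ factors minus number of $\bar u$ factors) equal one, which corresponds precisely to sign patterns with exactly two plus signs and one minus sign. Consequently, the non-phase-rotation patterns are those with zero, two, or three minus signs; substituting $\xi^1=\xi^2=\xi^3 = \xi$ yields
\[
\xi^4 \in \{3\xi,\ -\xi,\ -3\xi\},
\]
with the corresponding matched-sign resonance equation $|\xi^4|^2 = \sigma |\xi|^2$ where $\sigma \in \{3,\, -1,\, -3\}$ respectively.

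Finally, I would simply plug in: the three resonance equations become $9|\xi|^2 = 3|\xi|^2$, $|\xi|^2 = -|\xi|^2$, and $9|\xi|^2 = -3|\xi|^2$, each of which forces $\xi = 0$, so that $\xi^4$ also vanishes and we are left only with the trivial configuration $(0,0,0,0)$. There is no real analytic obstacle here; the only points requiring a moment of care are the character-count identification of which sign patterns correspond to phase-rotation-symmetric monomials, and the brief justification that the case where $\xi^4$ (rather than one of the three inputs) sits outside the triple equality reduces to the above by the permutation symmetry noted at the start.
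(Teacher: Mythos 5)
Your proposal is correct and follows essentially the same route as the paper: reduce to the case $\xi^1=\xi^2=\xi^3=\xi$, enumerate the non-phase-rotation sign patterns giving $\xi^4 \in \{3\xi, -\xi, -3\xi\}$ with matched time-frequency relations, and observe each forces $\xi=0$. The only difference is that you spell out the permutation-symmetry justification for the WLOG and the charge-count identification of the sign patterns, which the paper leaves implicit.
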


\end{description}
We will use the above classification to separate the cubic interactions using smooth frequency cut-offs as 
\[
C(\bfu,\bfu,\bfu) = C^{res}(u,\bu,u) + C^{0}(\bfu,\bfu,\bfu) + 
C^{nr}(\bfu,\bfu,\bfu) + C^{tr}(\bfu,\bfu,\bfu),
\]
where
\begin{itemize}
    \item $C^{res}(u,\bu,u)$ contains balanced interactions with phase rotation symmetry
    \item $C^{0}(\bfu,\bfu,\bfu)$ contains only low frequency interactions
    \item $C^{nr}(\bfu,\bfu,\bfu)$ contains only balanced nonresonant interactions
  \item $C^{tr}(\bfu,\bfu,\bfu)$ contains only transversal interactions, balanced or unbalanced.
\end{itemize}

\subsection{ Translation invariant multilinear forms}

As  our problem and many of our estimates are  invariant with respect to translations, it is natural  that translation invariant multilinear forms play an important role in the analysis.

The simplest examples of multilinear forms are multipliers,
which are convolution operators,
\[
m(D) u (x) = \int K(y) u(x-y) \,dy, \quad \hat K = m.
\]
More generally, we will denote by $L$  any convolution operator 
\[
Lu (x) = \int K(y) u^y(x)\, dy,
\]
where $K$ is an integrable kernel, or 
 a bounded measure, with a universal bound and where
\[
u^y(x) := u(x-y),
\]
is just a shorthand notation for translations of $u$ by $y$.

\medskip

Throughout the paper we will use multilinear operators,  and denote by $L$ any multilinear form 
\[
L(u_1,\cdots, u_k) (x) = 
\int K(y_1, \cdots,y_k) u_1^{y_1}(x) \cdots u_k^{y_k}(x) \, dy,
\]
where, again $K$ is assumed to have a kernel which is integrable, or more generally,  a bounded measure (we allow for the latter in order to be able to include products here).
Multilinear forms satisfy the same bounds as 
corresponding products do, as long as we work in translation invariant Sobolev spaces. Such a form may be equivalently described via its symbol $a(\xi^1, \cdots \xi^k)$, as
\[
\widehat{L(u_1,\cdots, u_k)}(\xi) = (2\pi)^{-\frac{n(k-1)}2}\int_{\xi^1+ \cdots \xi^k = \xi} 
a(\xi^1, \cdots \xi^k) \hat u_1(\xi^1) \cdots \hat u_k(\xi^k) d \xi^1 \cdots d \xi^{k-1},
\]
where the symbol $a$ is the inverse Fourier transform of the kernel $K$.

\medskip

A special role in our analysis is played 
by multilinear forms which exhibit phase rotation symmetry, where the arguments $u$ and $\bar u$ are alternating. For such forms we borrow the notations from our earlier paper \cite{IT-global}.

Precisely, for an integer $k \geq 2$, we will use translation invariant $k$-linear  forms 
\[
(\mathcal D(\R^n))^{k} \ni (u_1, \cdots, u_{k}) \to     Q(u_1,\bu_2,\cdots) \in \mathcal D'(\R^n),
\]
where the nonconjugated and conjugated entries are alternating.

Such a form is uniquely described by its symbol $q(\xi_1,\xi_2, \cdots,\xi_{k})$
via
\[
\begin{aligned}
Q(u_1,\bu_2,\cdots)(x) = (2\pi)^{-nk} & 
\int e^{i(x-x_1)\xi^1} e^{-i(x-x_2)\xi^2}
\cdots 
q(\xi^1,\cdots,\xi^{k})
\\ & \qquad 
u_1(x_1) \bu_2(x_2) \cdots  
dx_1 \cdots dx_{k}\, d\xi^1\cdots d\xi^k,
\end{aligned}
\]
or equivalently on the Fourier side
\[
\mathcal F Q(u_1,\bu_2,\cdots)(\xi)
= (2\pi)^{-\frac{n(k-1)}2} \int_{D}
q(\xi^1,\cdots,\xi^{k})
\hat u_1(\xi^1) \bar{\hat u}_2(\xi^2) \cdots  \,
d\xi^1 \cdots d\xi^{k-1},
\]
where, with alternating signs, 
\[
D := \{ \xi = \xi^1-\xi^2 + \cdots \}.
\]

They can also be described via their kernel
\[
Q(u_1,\bu_2,\cdots)(x) =  
\int K(x-x_1,\cdots,x-x_{k})
u_1(x_1) \bu_2(x_2) \cdots  
dx_1 \cdots dx_{k},
\]
where $K$ is defined in terms of the  
Fourier transform  of $q$
\[
K(x_1,x_2,\cdots,x_{k}) = 
(2\pi)^{-\frac{kn}2} \hat q(-x_1,x_2,\cdots,(-1)^k x_{k}).
\]

These notations are 
convenient but slightly nonstandard because of the alternation of complex conjugates. Another important remark is that, for $k$-linear forms, the cases of odd $k$, respectively even $k$ play different roles here, as follows:

\medskip

i) The $2k+1$ multilinear forms will be thought of as functions, e.g. those which appear 
in some of our evolution equations.

\medskip

ii) The $2k$ multilinear forms will be thought of as densities, e.g. which appear 
in some of our density-flux pairs.

\medskip
Correspondingly,  
to each $2k$-linear form $Q$ we will associate
a $2k$-linear functional $\mathbf{Q}$ defined by 
\[
\mathbf{Q}(u_1,\cdots,u_{2k}) := \int_\R Q(u_1,\cdots,\bu_{2k})(x)\, dx,
\]
which takes real or complex values.
This may be alternatively expressed 
on the Fourier side as 
\[
\mathbf{Q}(u_1,\cdots,u_{2k}) = (2\pi)^{n(1-k)} \int_{D_0}
q(\xi^1,\cdots,\xi^{2k})
\hat u_1(\xi^1) \bar{\hat u}_2(\xi^2) \cdots  
\bar{\hat u}_{2k}(\xi^{2k})\,d\xi^1 \cdots d\xi^{2k-1},
\]
where, with alternating signs, the diagonal $D_0$ is given by
\[
D_0 = \{ 0 = \xi^1-\xi^2 + \cdots \}.
\]
Note that in order to define the multilinear functional $\mathbf{Q}$ we only need to know the symbol $q$ on $D_0$.

\section{A frequency envelope formulation of the results}
\label{s:boot}

For expository purposes, the main results of this paper, namely Theorem~\ref{t:local2}, Theorem~\ref{t:local3+}, Theorem~\ref{t:global3}, Theorem~\ref{t:global2} and Theorem~\ref{t:global2c}
are stated in a simplified form in the introduction. However, the full results that we prove provide a much more detailed picture, which gives  a full family of bilinear $L^2_{t,x}$ bounds 
for the solutions. Furthermore, the proofs of the results are complex bootstrap arguments relative to all these bounds, both linear and bilinear. For these reasons, it is important to have a good setup for both the results and for the bootstrap assumptions. An elegant way to do this is to use the 
language of frequency envelopes. Our goals in this section are 
\begin{enumerate}[label=(\roman*)]
\item to define frequency envelopes, 
\item to provide a more accurate, frequency envelope
version of our main results, 
\item  to provide the bootstrap assumptions in the proofs of each of the theorems, 
\item to outline the continuity argument allowing us to use 
these bootstrap assumptions.
\end{enumerate}

\subsection{Frequency envelopes}
Before stating one of the main
theorems of this paper, we revisit the \emph{frequency envelope}
notion. This elegant and useful tool will streamline the exposition of our results, and one should think at it as a bookkeeping device that is meant to efficiently track the evolution of the energy of the solutions between dyadic energy shells.  

Following Tao's paper \cite{Tao-BO} we say that a sequence $c_{k}\in l^2$
is an $L^2$ frequency envelope for $\phi \in L^2$ if
\begin{itemize}
\item[i)] $\sum_{k=0}^{\infty}c_k^2 < \infty$;\\
\item[ii)] it is slowly varying, 
\[
c_j /c_k \leq 2^{\delta \vert j-k\vert}, \qquad j,k \in \N
\]
with a small universal constant $\delta$;
\\
\item[iii)] it bounds the dyadic norms of $\phi$, namely $\Vert P_{k}\phi \Vert_{L^2} \leq c_k$. 
\end{itemize}
Given a frequency envelope $c_k$ we define 
\[
 c_{\leq k} := (\sum_{j \leq k} c_j^2)^\frac12, \qquad  c_{\geq k} := (\sum_{j \geq k} c_j^2)^\frac12.
\]
 In  practice we may choose our envelopes in a minimal fashion so that we have the equivalence
\[
\sum_{k=0}^{\infty}c_k^2  \approx \|u\|_{L^2}^2,
\]
and so that we also have 
\[
c_0 \approx \|u\|_{L^2}.
\]
The same applies with respect to any $H^s$ norm.

\begin{remark}\label{r:unbal-fe}
Another useful variation is to weaken the slowly varying assumption to
\[
2^{- \delta \vert j-k\vert} \leq    c_j /c_k \leq 2^{C \vert j-k\vert}, \qquad j < k,
\]
where $C$ is a fixed but possibly large constant. All the results in this paper are compatible with this choice.
This offers the extra flexibility of providing higher regularity results by the same arguments.
\end{remark}

\subsection{ The frequency envelope form of the results}

The  goal of this section is twofold: (i) restate the bounds needed for our  main results in Theorem~\ref{t:local2}, Theorem~\ref{t:local3+}, Theorem~\ref{t:global3}, 
Theorem~\ref{t:global2} and Theorem~\ref{t:global2c} in the frequency envelope setting,  and (ii) to set up the bootstrap argument for the proof of these bounds. 

The set-up for the bootstrap is most conveniently described using  the language  of frequency envelopes. This was originally introduced in the context of dyadic Littlewood-Paley decompositions in work of Tao, see for example\cite{Tao-WM}. For another interesting general frequency envelope setup one can see \cite{IT-BO}, which was the first place to use it to bootstrap bilinear bounds.
\bigskip

To start with, we assume that the initial data has small size in $H^s$,
\[
\| u_0\|_{H^s} \lesssim \epsilon.
\]
We consider a dyadic frequency decomposition for the initial  data, 
\[
u_0 = \sum_{\lambda \in 2^\N} u_{0,\lambda}.
\]
Then we place the initial data components under an admissible  frequency envelope,
\[
\|u_{0,\lambda}\|_{H^s} \leq \epsilon c_\lambda, \qquad c \in \ell^2,
\]
where the envelope $\{c_\lambda\}$ is not too large,
\begin{equation*}
\| c\|_{\ell^2} \approx 1.    
\end{equation*}

Our goal will be to establish similar frequency envelope bounds for the solution. 
We now state the frequency envelope bounds that will be the subject of the next theorems. These are as follows:

\begin{enumerate}[label=(\roman*)]
\item Uniform frequency envelope bound:
\begin{equation}\label{uk-ee}
\| u_\lambda \|_{L^\infty_t L^2_x} \lesssim \epsilon c_\lambda \lambda^{-s}
\end{equation}
\item Unbalanced bilinear $L^2_{t,x}$ bounds:
\begin{equation} \label{uab-bi-unbal}
\| u_\lambda \bu_\mu^{x_0}  \|_{L^2_{t,x}} \lesssim \epsilon^2 c_\lambda c_\mu \lambda^{-s-\frac12} \mu^{-s+\frac{n-1}2} 
, \qquad \mu \ll \lambda
\end{equation}
\item Balanced bilinear $L^2_{t,x}$ bound 
\begin{equation} \label{uab-bi-bal}
\| |D|^\frac{3-n}2 (u_\lambda \bu_\mu^{x_0})  \|_{L^2_{t,x}} \lesssim \epsilon^2 c_\lambda c_\mu \lambda^{-2s+\frac12}   (1+ \lambda |x_0|)
, \qquad \lambda \approx \mu 
\end{equation}
\item Strichartz bounds with a loss of derivatives, for any sharp Strichartz exponents $(p,q)$:
\begin{equation}\label{uk-Str}
\| u_\lambda\|_{L^p_t L^q_x} \lesssim \epsilon c_\lambda \lambda^{-s+ \frac{2}p}  . 
\end{equation}
\end{enumerate}

We remark that, as a consequence of the balanced 
bilinear bound \eqref{uab-bi-bal}, one also obtains
the linear bounds 
\begin{equation}\label{uk-l4-3d}
\| u_\lambda\|_{L^4_{t,x}} \lesssim \epsilon c_\lambda \lambda^{-s+ \frac{n-2}4}, \qquad n \geq 3  ,  
\end{equation}
respectively
\begin{equation}\label{uk-l4-2d}
\| u_\lambda\|_{L^4_t L^8_x} \lesssim \epsilon c_\lambda \lambda^{-s+ \frac14  }, \qquad n = 2  . 
\end{equation}
We also remark on a special case of the Strichartz bound
\eqref{uk-Str} which has a $1/2$ derivative loss and will play a leading role in the sequel in the proof of the global results in two dimensions:
\begin{equation}\label{uk-Str2}
\| u_\lambda\|_{L^4_{t,x}} \lesssim \epsilon c_\lambda \lambda^{-s+ \frac12}, \qquad n = 2.  
\end{equation}

Here we distinguish between the  2D  case and higher dimensions. The 2D bounds bear a closer resemblance to the 1D case studied in \cite{IT-qnls}; this reflects the fact that there is less dispersion in 2D, and in particular the $L^4_{t,x}$ Strichartz bound lies on the sharp Strichartz line and cannot be obtained  directly 
using interaction Morawetz based tools. By contrast, in three and higher dimension we are able to prove the $L^4_{t,x}$  bound directly from interaction Morawetz.

We also remark on the need to add translations to the bilinear 
$L^2_{t,x}$ estimates. This is because, unlike the linear bounds
\eqref{uk-ee} and \eqref{uk-Str} which are inherently invariant with respect to translations, bilinear estimates are not invariant 
with respect to separate translations for the two factors.
Hence, in particular, they cannot be directly transferred to 
nonlocal bilinear forms. However, if we allow translations, then
one immediate corollary of \eqref{uab-bi-bal} is that for any bilinear form $L$ with smooth and bounded symbol we have 
\begin{equation} \label{uab-bi-boot-trans}
\| |D|_x^{\frac{3-n}2} L(u_{\lambda} \bu_{\lambda})  \|_{L^2_{t,x}} \lesssim  \epsilon^2 c_{\lambda^2} \lambda^{-2s-\frac12}.
\end{equation}
This is essentially the only way we will use this translation invariance in our proofs. We also remark on the $(1+\lambda|x_0|)$ 
factor, which appears in the balanced bilinear bounds but not in the 
unbalanced ones. Heuristically this is because in the unbalanced
case transversality remains valid even when one of the metrics gets translated; on the other hand, in the balanced case we also estimate interactions of near parallel waves, and there transversality is very sensitive to changes of the metric, so there is a price to pay for translations. 
\medskip

We are now ready to state our local in time results, beginning  with the case of three and higher space dimensions:

\begin{theorem}\label{t:local-fe3}
a)  Let $n \geq 3$, $s > \frac{n+1}2$,  $\epsilon \ll 1$ and $T > 0$. Consider the equation \eqref{qnls} with cubic nonlinearity and let $u \in C[0,T;H^s]$ be a smooth solution with initial data $u_0$  which has $H^s$ size at most $\epsilon$.
 Let $\{ \epsilon  c_\lambda\}$ be a frequency envelope for the initial data  in $H^s$. Then the solution $u$ satisfies  the bounds \eqref{uk-ee}, \eqref{uab-bi-unbal}, \eqref{uab-bi-bal} and  \eqref{uk-Str}   uniformly with respect to $x_0 \in \R$.

b) The same result holds for \eqref{dqnls} but with $s > \frac{n+3}2$.
\end{theorem}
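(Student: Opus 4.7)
The plan is to establish all four bounds \eqref{uk-ee}, \eqref{uab-bi-unbal}, \eqref{uab-bi-bal}, \eqref{uk-Str} simultaneously through a bootstrap argument on $[0,T]$. Since the solution is smooth, we may fix a large constant $C$ and define the bootstrap region where these four bounds hold with $\epsilon$ replaced by $C\epsilon$. The goal is then to recover each of them with a constant $\ll C\epsilon$ (and independent of $T$) provided $\epsilon$ is small enough; a standard continuity argument in $T$ then closes the bootstrap.

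\textbf{Step 1: Paradifferential reduction.} Following the paradifferential setup of Section~\ref{s:lin}, I would rewrite the equation for each frequency piece $u_\lambda = P_\lambda u$ in the schematic form
\[
(i\partial_t + T_g \Delta) u_\lambda = F_\lambda ,
\]
where $T_g$ is the paradifferential quantization of the truncated metric and $F_\lambda$ collects paracommutator terms, balanced metric contributions and the truly cubic/higher-order remainder. Using the bootstrap bilinear and Strichartz assumptions together with paraproduct bounds, I would show $F_\lambda$ lies in a suitable dual Strichartz/energy space with the correct frequency-envelope weight; the cubic resonant/nonresonant/transversal decomposition of Section~\ref{s:resonances} is what makes this estimation systematic.

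\textbf{Step 2: Energy estimates.} For \eqref{uk-ee}, I would run frequency-localized energy estimates for the paradifferential flow, relying on the density-flux conservation law of Section~\ref{s:df}. Modified energies absorb the non-perturbative portion of $F_\lambda$, while the remaining contributions are controlled by the bootstrap bilinear bounds (for transversal interactions) and by the Strichartz bound \eqref{uk-Str} combined with \eqref{uk-l4-3d} (for the balanced cubic piece, which is perturbative in $n\geq 3$).

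\textbf{Step 3: Bilinear $L^2$ bounds via interaction Morawetz.} This is the heart of the argument and proceeds along the lines of Section~\ref{s:para}, adapting Planchon--Vega \cite{PV} and the one-dimensional version in \cite{IT-qnls}. For $\mu \ll \lambda$ I would construct an interaction Morawetz functional $\mathbf{I}(u_\lambda, u_\mu)$ for the paradifferential flow whose time derivative is a positive quadratic form in $u_\lambda \bu_\mu^{x_0}$, yielding \eqref{uab-bi-unbal} after bounding the error terms by the bootstrap hypotheses. For $\lambda \approx \mu$ the same strategy, with the $|D|^{(3-n)/2}$ weight chosen to restore coercivity in $n\geq 3$, gives \eqref{uab-bi-bal}; the translation parameter $x_0$ enters through a translated weight in the functional and accounts for the $(1+\lambda|x_0|)$ factor.

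\textbf{Step 4: Strichartz bounds.} In dimension $n\geq 3$, the balanced bilinear bound \eqref{uab-bi-bal} produces the $L^4_{t,x}$ bound \eqref{uk-l4-3d} directly, which is itself a sharp Strichartz exponent. Interpolating this with the energy bound \eqref{uk-ee} and then upgrading to the paradifferential flow (via a $TT^*$/perturbation argument using the already controlled $F_\lambda$ and the constant-coefficient Strichartz inequality \eqref{strichartz3}) delivers the full range \eqref{uk-Str}. The loss of $2/p$ derivatives appears naturally from the partial dependence on the energy endpoint.

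\textbf{Main obstacle.} The real difficulty is Step~3: proving interaction Morawetz bounds for a paradifferential flow whose metric depends on the solution, while only having the bootstrap bounds available. In higher dimension the interaction Morawetz form exhibits reduced coercivity compared to the constant-coefficient setting, so the error terms arising from metric variation, paracommutators and cubic remainders must be absorbed with care; any wasteful estimate here would forfeit the half-derivative gap above scaling. Threading the translation parameter $x_0$ through the balanced estimate \eqref{uab-bi-bal} while keeping the $(1+\lambda|x_0|)$ dependence sharp is the most delicate technical point.
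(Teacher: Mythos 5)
Your overall architecture --- bootstrap on the frequency-envelope bounds, paradifferential reduction for $u_\lambda$, interaction Morawetz for the bilinear estimates, and a perturbative argument off the flat flow for the lossy Strichartz bounds --- is exactly the skeleton of the paper's proof (Proposition~\ref{p:boot}, Section~\ref{s:lin}, Theorem~\ref{t:para} applied with $v_\lambda=u_\lambda$, and Theorem~\ref{t:para-se}). The genuine gap is in how you propose to measure the paradifferential source terms. In Step~1 you place $F_\lambda$ ``in a suitable dual Strichartz/energy space''; the paper deliberately does \emph{not} do this, and for a reason that is quantitative, not stylistic. The dangerous terms are $\partial_x(g_{[<\lambda]}-I)\partial_x u_\lambda$ and the low-high commutator terms $\partial_x(u_{<\lambda}\bu_{<\lambda})\partial_x u_\lambda$: they carry two derivatives at frequency $\lambda$, and any direct dual-Strichartz or H\"older estimate of them produces a factor of order $\epsilon^2\lambda^2$ against a Strichartz norm of $u_\lambda$, which cannot be absorbed for large $\lambda$ (equivalently, it costs two derivatives at the Pecher endpoint rather than the one derivative claimed in \eqref{uk-Str}, and it destroys the $s>\frac{n+1}2$ threshold in the energy/bilinear step). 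The paper's essential device is to measure the source only through its $L^1_{t,x}$ interaction with $u_\lambda$ and its translates --- the quantities $d_\lambda$ in \eqref{dl} and $I(v_\lambda,f_\lambda)$ --- which are then controlled by \emph{products of two unbalanced bilinear $L^2$ bounds}, each gaining $\lambda^{-1/2}$ from transversality (Proposition~\ref{p:N-lambda}, Lemma~\ref{l:Fmp}, and the bound \eqref{Ns-bd}, where the second bilinear factor pairs the source with an adjoint flat solution via Corollary~\ref{c:para}). Without this replacement of ``norm of the source'' by ``interaction of the source with $u_\lambda^{x_0}$'', your Steps~2 and~4 do not close at the stated regularity, and your Step~4 claim that interpolating the lossless $L^4$ bound with energy plus a $TT^*$ perturbation gives the full range \eqref{uk-Str} is missing precisely this mechanism.

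Two smaller points. First, no modified energies are needed for part (a): in $n\geq 3$ \emph{all} of $N_\lambda(u)$, including the balanced resonant cubic part, is perturbative in the $d_\lambda$ sense (the balanced piece is handled with four lossless $L^4$ bounds coming from \eqref{uab-bi-bal}, not from the lossy \eqref{uk-Str}); density/flux corrections only enter for the 2D low-regularity global result. Second, in the unbalanced Morawetz step the positivity is not automatic: the paper must first angularly localize $u_\lambda$ (justified by a commutator lemma keeping $d_\lambda$ under control), foliate the weight $a(x)=|x|$ over directions, localize near the diagonal on the $\mu^{-1}$ scale, and only then compare with the flat metric; your write-up correctly flags the coercivity loss as the main obstacle but does not indicate how it is overcome, and this is where the $(1+\lambda|x_0|)$ factor and the uniformity in $x_0$ are actually earned.
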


We continue with the case of two space dimensions, where we provide several variants of the result:

\begin{theorem}\label{t:local-fe2}
a) Let $n = 2$, $s > \frac32$, $\epsilon \ll 1$ and $T > 0$. Consider the equation \eqref{qnls} with cubic nonlinearity, and for which  one of the following three conditions holds:
\begin{itemize}
\item[(i)] either $s \geq 2$ and $T$ is arbitrary  large,
\item[(ii)] or $s >\frac32$ and $T \ll \epsilon^{-6}$,
\item[(iii)] or $s > \frac32$, the problem is conservative and $T$ is arbitrarily large.
\end{itemize}
Let $u \in C[0,T; H^s]$ be a smooth solution with initial data $u_0$  which has $H^s$ size at most $\epsilon$.
 Let $ \{\epsilon c_\lambda\}$ be a frequency envelope for the initial data  in $H^s$. Then the solution $u$ satisfies  the bounds \eqref{uk-ee}, \eqref{uab-bi-unbal},  \eqref{uab-bi-bal} and \eqref{uk-Str}  uniformly with respect to $x_0 \in \R$.

b) The same result holds for \eqref{dqnls} but with $s$ increased by one.

\end{theorem}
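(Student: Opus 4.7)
I would prove the four frequency envelope bounds \eqref{uk-ee}, \eqref{uab-bi-unbal}, \eqref{uab-bi-bal} and \eqref{uk-Str} by a single continuity/bootstrap argument on $[0,T]$. The standing bootstrap hypothesis would be that each of these four bounds holds with $\epsilon$ (resp.\ $\epsilon^2$) replaced by $C\epsilon$ (resp.\ $C\epsilon^2$), uniformly in $x_0$ and in the dyadic parameters. The starting point is Theorem~\ref{t:regular}, applied with extra regularity, which gives a smooth solution on some time interval; combined with the continuity in $t$ of each norm, this sets up the standard open/closed dichotomy, so the whole task reduces to improving the constant $C$ to $C/2$.

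\textbf{Paradifferential reduction and energy estimates.} Following the plan announced in Section~\ref{s:lin}, I would rewrite \eqref{qnls} dyadically as a paradifferential Schr\"odinger flow
\[
\bigl(i\partial_t + \partial_j\, g^{jk}_{<h}(u)\,\partial_k\bigr) u_\lambda \;=\; F_\lambda,
\]
where $g_{<h}$ denotes the paracoefficient built from the low frequency part of $u$, and $F_\lambda$ gathers the remaining unbalanced, transversal and nonresonant cubic pieces plus, in the 2D case, the balanced resonant piece $C^{res}(u,\bar u,u)$. Using the density--flux formalism of Section~\ref{s:df}, I would propagate the dyadic energy $\lambda^{2s}\|u_\lambda\|_{L^2}^2$ by testing against a modified (quasilinear) energy whose commutator with the metric is tame because $g_{<h}(u)$ is slowly varying. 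All perturbative trilinear source terms in $\langle F_\lambda, u_\lambda\rangle$ are absorbed using exactly the bilinear bootstrap bounds \eqref{uab-bi-unbal}--\eqref{uab-bi-bal} and the Strichartz bound \eqref{uk-Str2}, producing \eqref{uk-ee}.

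\textbf{Interaction Morawetz and Strichartz.} The bilinear bounds \eqref{uab-bi-unbal} and \eqref{uab-bi-bal} are the heart of the argument. I would prove them by the paradifferential interaction Morawetz calculus of Section~\ref{s:para}, testing two frequency localized paradifferential equations against each other in the spirit of Planchon--Vega. In the unbalanced case, transversality is robust under the metric perturbation and the estimate drops out. In the balanced case, coercivity is weaker and translation of one factor produces the $(1+\lambda|x_0|)$ loss recorded in \eqref{uab-bi-bal}; the perturbative remainder terms must be reabsorbed by the bilinear bootstrap itself. For the Strichartz piece \eqref{uk-Str}, direct constant coefficient Strichartz is not available for the paradifferential flow at this regularity, so I would instead follow Section~\ref{s:Str} and derive the full sharp Strichartz range globally in time but with the declared $\tfrac{2}{p}$-derivative loss, which is consistent with the energy and bilinear bounds.

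\textbf{The three cases and the main obstacle.} All of the above closes the bootstrap except for the contribution of the balanced resonant piece $C^{res}(u,\bar u,u)$, which is precisely where the three hypotheses (i)--(iii) enter.
\begin{itemize}
\item In case (i), $s-\frac32\geq \frac14$, so estimating $C^{res}$ via the $L^4_{t,x}$ Strichartz bound \eqref{uk-Str2} produces a $\lambda^{-\delta}$ surplus that closes the bootstrap for arbitrary $T$.
\item In case (ii), the same $L^4_{t,x}$ computation trades the missing derivative for a H\"older factor $T^{1/6}$, which is admissible under $T\ll\epsilon^{-6}$.
\item In case (iii), neither surplus is available, and one must exploit the conservative Definition~\ref{d:conservative2}, i.e.\ the reality of $c(\xi,\xi,\xi)$, to perform a normal form / modified energy correction that removes the leading balanced resonant contribution from the energy identity.
\end{itemize}
The main obstacle is exactly case (iii): at the threshold $s>\frac32$ the balanced resonant cubic term $C^{res}$ is genuinely nonperturbative, and handling it requires constructing a quartic modified energy in the quasilinear setting whose cubic time-derivative cancels against $C^{res}$ precisely because $c(\xi,\xi,\xi)$ is real on the diagonal. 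This is the role of the final section of the paper, and it is both the most delicate and the most novel part of the proof.
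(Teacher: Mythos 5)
Your treatment of cases (i) and (ii) follows the paper's actual route: a bootstrap on the four frequency-envelope bounds, the paradifferential reduction with the source term measured against $\bu_\lambda^{x_0}$ in $L^1$ (Proposition~\ref{p:N-lambda}), the interaction Morawetz machinery of Theorem~\ref{t:para} applied with $v_\lambda=u_\lambda$ for the bilinear bounds, and the lossy Strichartz estimates of Section~\ref{s:Str} for \eqref{uk-Str}. One quantitative caveat: the $L^4_{t,x}$ bound \eqref{uk-Str2}, with its $1/2$ derivative loss, would by itself only close the balanced cubic term for $s\geq 2$; the threshold $s\geq\frac74$ in case (i) comes from using the lossless $L^4_tL^8_x$ bound \eqref{uk-l4-2d} on two factors together with energy/Bernstein on the remaining ones, as in the cases B3b of Proposition~\ref{p:N-lambda}.

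The genuine gap is in case (iii). You propose to handle $C^{res}$ by a quartic modified energy / normal form whose time derivative cancels the balanced resonant cubic term, citing the reality of $c(\xi,\xi,\xi)$. A normal-form correction cannot cancel resonant contributions: its time derivative under the flow carries the factor $\Delta^4\xi^2$, which vanishes exactly on the resonant set, and the paper says explicitly in Section~\ref{s:df} that the 1D strategy of quartic density-plus-flux corrections fails in 2D because there are too many resonant interactions (in 1D it worked only because a stronger conservative condition gave second-order vanishing on the diagonal). What the 2D conservative hypothesis actually yields is only the first-order vanishing \eqref{c4-cancel} of the quartic mass/momentum density symbols on the diagonal, and the paper exploits this not through a modified energy but by factoring a spatial derivative out of the symbol: $C^{4,bal}_{\lambda,m}=\partial_x Q^{4,bal}_{\lambda,m}+F^{4,bal}_{\lambda,m}$, with the flux piece controlled in $L^{6/5}_tL^{4/3}_x$ and the remainder in $L^1$ (Proposition~\ref{p:bal-sources}); normal-form-type density corrections $B^{4,nr}$ appear only for the balanced \emph{nonresonant} terms without phase rotation symmetry, where no conservative assumption is needed (Proposition~\ref{p:nr-sources}). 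Moreover, since the bilinear bounds \eqref{uab-bi-unbal} and \eqref{uab-bi-bal} are themselves part of the bootstrap that must be closed globally in time, these density and flux corrections have to be carried into the interaction Morawetz functional itself (the $\bJ^{6}_{\lambda\mu}$ term of the last section), not merely into the energy identity; your proposal addresses only the energy side. As stated, your plan for case (iii) would not close.
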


\medskip

The results in Theorem~\ref{t:local-fe3} and Theorem~\ref{t:local-fe2} [(i), (iii)] apply independently of the size of $T$, so they yield the global in time solutions in Theorems~\ref{t:global3}, \ref{t:global2},  \ref{t:global2c}, given an appropriate local well-posedness result.

 We continue with a theorem that applies to the linearized  equation, which will be essential in the proof of all of our  well-posedness results:

\begin{theorem}\label{t:linearize-fe}
Consider the cubic equation \eqref{qnls} for $n \geq 2$. 
Let $u$ be a solution as in Theorem~\ref{t:local-fe2} or Theorem~\ref{t:local-fe3}. Let $v$ be a solution 
to the linearized equation  around $u$ with $L^2$ initial data 
$v_0$ and  with frequency envelope $d_\lambda$. Then $v$ satisfies 
the following bounds:
\begin{enumerate}[label=(\roman*)]
\item Uniform frequency envelope bound:
\begin{equation}\label{uk-ee-lin}
\| v_\lambda\|_{L^\infty_t L^2_x} \lesssim \epsilon d_\lambda 
\end{equation}
\item Balanced bilinear $(v,v)$-$L^2$ bound:
\begin{equation} \label{vvab-bi-bal-lin}
\| |D|^\blue{\frac{3-n}2}(v_\lambda \bv_\mu^{x_0})  \|_{L^2_{t,x}} \lesssim d_\lambda d_\mu  \blue{\lambda^\frac12} (1+ \lambda |x_0|)
, \qquad \lambda\approx \mu 
\end{equation}
\item Unbalanced bilinear $(v,v)$-$L^2$ bound:
\begin{equation} \label{vvab-bi-unbal-lin}
\| v_\lambda \bv_\mu^{x_0}  \|_{L^2_{t,x}} \lesssim d_\lambda d_\mu \mu^\blue{\frac{n-1}2} \lambda^{-\frac12}
, \qquad \mu \ll \lambda
\end{equation}
%for all $x_0 \in \R$.
\item Balanced bilinear $(u,v)$-$L^2$ bound:
\begin{equation} \label{uvab-bi-bal-lin}
\| |D|^\blue{\frac{3-n}2}(u_\lambda \bv_\mu^{x_0})  \|_{L^2_{t,x}} \lesssim c_\lambda d_\mu  \lambda^{-s\blue{+\frac12}}(1+ \lambda |x_0|)
, \qquad \lambda\approx \mu 
\end{equation}
\item Unbalanced bilinear $(u,v)$-$L^2$ bound:
\begin{equation} \label{uvab-bi-unbal-lin}
\| u_\lambda \bv_\mu^{x_0}  \|_{L^2_{t,x}} \lesssim c_\lambda d_\mu \lambda^{-s} \frac{\min\{\lambda,\mu\}^\blue{\frac{n-1}2}}{(\lambda+\mu)^\frac12}
, \qquad \mu \not\approx \lambda .
\end{equation}
\end{enumerate}
\end{theorem}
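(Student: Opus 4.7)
The plan is to establish the five bounds \eqref{uk-ee-lin}--\eqref{uvab-bi-unbal-lin} inside the same bootstrap loop used to close Theorems~\ref{t:local-fe2} and \ref{t:local-fe3}, so that the estimates on $u$ and on $v$ are recovered jointly. The first step is to recast the linearized equation in the paradifferential form of Section~\ref{s:lin}. Differentiating \eqref{qnls} in the direction $v$ produces, schematically,
\begin{equation*}
i v_t + T_{g(u)} \Delta v \;=\; T_{Q(u,\bar u,\partial u,\partial \bar u)}\, v + R(u,\bar u, v,\bar v),
\end{equation*}
where the principal paradifferential part coincides with the one governing $u$ itself, the middle term collects the subprincipal paradifferential corrections (quadratic in $u$, of size $O(\epsilon^2)$), and $R$ is a remainder which is quadratic in $u$ and linear in $v$. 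Apart from the balanced resonant cubic interactions in the 2D low regularity case, $R$ plays a perturbative role in the density--flux analysis.

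\medskip

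Given this reformulation, the uniform bound \eqref{uk-ee-lin} follows from dyadic paradifferential energy estimates for $P_\lambda v$: commuting $P_\lambda$ with $T_{g(u)}\Delta$ and with the subprincipal corrections produces commutators that are controlled by the already-bootstrapped linear and bilinear bounds on $u$, exactly as in the energy estimate for $u$. For the balanced $(v,v)$ bilinear estimate \eqref{vvab-bi-bal-lin} and its unbalanced counterpart \eqref{vvab-bi-unbal-lin}, the plan is to apply the interaction Morawetz machinery of Section~\ref{s:para} with both slots of the density--flux pair filled by frequency-localized copies of $v$. The density--flux identity is identical to the one used for $u$, so the same computation closes; the only difference is that the $v$-pieces carry $L^2$-sized envelopes $d_\lambda$ rather than $\epsilon c_\lambda \lambda^{-s}$, which explains the absence of the $\epsilon^2 \lambda^{-2s+\frac12}$ factors that appear in \eqref{uab-bi-bal}.

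\medskip

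For the mixed $(u,v)$ estimates \eqref{uvab-bi-bal-lin} and \eqref{uvab-bi-unbal-lin}, I would set up a mixed interaction Morawetz where one slot of the density--flux pair is occupied by $u$ and the other by $v$. Crucially, both flows share the paradifferential principal part $T_{g(u)}\Delta$, so the cross density--flux identity closes by the same mechanism as in the pure $(u,u)$ and $(v,v)$ cases; the right-hand side picks up one factor of $\epsilon c_\lambda \lambda^{-s}$ from the $u$ slot and one factor of $d_\mu$ from the $v$ slot, which matches the claimed bounds. For all three bilinear estimates, the unbalanced regime is read off from the density--flux identity at well-separated scales, paralleling the constant coefficient bound in Lemma~\ref{l:bi}; the translation $x_0$ is handled by the translation invariance of the density--flux pair in each slot separately, with the $(1+\lambda|x_0|)$ loss arising in the balanced case from the sensitivity of near-parallel waves to metric translations.

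\medskip

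The main obstacle will be the 2D low regularity regime, where the balanced resonant cubic contributions in $R$ fail to be perturbative and must instead be absorbed into modified bilinear densities, in the same spirit as the modified energies for $u$ in the final section; here the conservative assumption guarantees that the offending piece cancels against a correction term in the density--flux identity. A secondary point one must verify is that, even though $v$ carries no explicit smallness, every step above is \emph{linear} in the bootstrap constants for $v$, so that the joint bootstrap on $(u,v)$ closes. This linearity is automatic from the structure displayed above, since the equation for $v$ is linear and every source term in the density--flux analysis carries at most one $v$ factor.
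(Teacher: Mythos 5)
Your proposal follows essentially the same route as the paper: the paper writes the linearized flow in the frequency-localized paradifferential form \eqref{para-lin}, invokes Theorem~\ref{t:para} (whose interaction Morawetz machinery already contains the balanced/unbalanced $(v,v)$ and $(u,v)$ bounds with the $(1+\lambda|x_0|)$ loss), and reduces everything to the linearized analogue of Proposition~\ref{p:N-lambda}, namely $\| N^{lin}_\lambda v \cdot \bv_\lambda^{x_0}\|_{L^1_{t,x}} \lesssim \epsilon^2 d_\lambda^2$ under bootstrap assumptions on $v$, with the bounds on $u$ taken as already established. Your extra discussion (a joint bootstrap on $(u,v)$ instead of bootstrapping $v$ alone, and modified densities for the 2D balanced resonant terms, where the paper instead works under the $T \lesssim \epsilon^{-6}$ restriction of Theorem~\ref{t:para}) differs only in presentation and does not change the substance.
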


This theorem in particular yields $L^2$ well-posedness for the linearized equation.

\subsection{ The bootstrap hypotheses}

To prove the above theorems, we make a bootstrap assumption where we assume the same bounds but with a worse constant $C$, as follows:

\begin{enumerate}[label=(\roman*)]
\item Uniform frequency envelope bound:
\begin{equation}\label{uk-ee-boot}
\| u_\lambda \|_{L^\infty_t L^2_x} \leq C\epsilon c_\lambda \lambda^{-s}
\end{equation}
\item Unbalanced bilinear $L^2_{t,x}$ bounds:
\begin{equation} \label{uab-bi-unbal-boot}
\| u_\lambda \bu_\mu^{x_0}  \|_{L^2_{t,x}} \leq C^2 \epsilon^2 c_\lambda c_\mu \lambda^{-s-\frac12} \mu^{-s+\frac{n-1}2}  
, \qquad \mu \ll \lambda
\end{equation}
\item Balanced bilinear $L^2_{t,x}$ bound:
\begin{equation} \label{uab-bi-bal-boot}
\| |D_x|^{\frac{3-n}2} (u_\lambda \bu_\mu^{x_0})  \|_{L^2_{t,x}} \leq C^2\epsilon^2 c_\lambda c_\mu \lambda^{-2s+\frac{n-1}2} \mu^{-s} \lambda^\frac12 (1+ \lambda |x_0|)
, \qquad \lambda \approx \mu .
\end{equation}
\end{enumerate}
We note that in dimension $n\geq 3$ the estimate \eqref{uab-bi-bal-boot} implies an $L^4_{t,x}$ bound for $u_\lambda$, but that is no longer the case in dimension $n = 2$. For later use in the study of global well-posedness in two dimensions, we will add to the 
list above an $L^4_{t,x}$ bootstrap bound for $u_\lambda$, which corresponds to the estimate \eqref{uk-Str}:
\begin{enumerate}[resume]
    \item $L^4_{t,x}$ Strichartz bound for $n = 2$:
    \begin{equation}\label{uk-Str2-boot}
\| u_\lambda\|_{L^4_{t,x}} \lesssim C\epsilon c_\lambda \lambda^{-s+ \frac12}, \qquad n = 2.  
\end{equation}
\end{enumerate}

Then we seek to improve the constant in these bounds. The gain will come from the fact that the $C$'s will always come paired with extra $\epsilon$'s. Precisely, a continuity argument shows that 

\begin{proposition}\label{p:boot}
a) It suffices to prove Theorems~\ref{t:local-fe3}, \ref{t:local-fe2}(ii) under the bootstrap assumptions \eqref{uk-ee-boot}, \eqref{uab-bi-bal-boot} and
\eqref{uab-bi-unbal-boot}. 

b) It suffices to prove Theorem \ref{t:local-fe2}(i)(iii) under the bootstrap assumptions \eqref{uk-ee-boot}, \eqref{uab-bi-bal-boot}, \eqref{uab-bi-unbal-boot}, \eqref{uk-Str2-boot}.
\end{proposition}

We remark that the full Strichartz bounds \eqref{uk-Str}
are not part of this bootstrap loop. Similar bootstrap assumptions are made for $v$ in the proof of Theorem~\ref{t:linearize-fe} in Section~\ref{s:lin-proof}.

\section{A paradifferential/resonant  expansion of the equation}
\label{s:lin}

 In this section we consider the equation for the frequency localized portions 
of the solution.  We expand this equation in a dual fashion,
separating two principal components:

\begin{enumerate}
\item
The paradifferential part, which accounts for the quasilinear character of the problem. This is essential for the proofs of  all the results in this paper.

\item The doubly resonant part, which accounts for the 
nonperturbative semilinear part of the nonlinearity. This is only needed for the global results in Theorem~\ref{t:global2c} in two dimensions. 
\end{enumerate}

As a first approximation, the paradifferential part can be viewed as simply obtained by truncating the coefficients in the principal part to lower frequencies,
\begin{equation}\label{paraT}
i \partial_t v + \partial_j T_{g^{jk}(u)} \partial_k v 
= f.
\end{equation}
While this might suffice for the local well-posedness results  stated in Theorems~\ref{t:local2}  and \ref{t:local3+}, it is not precise enough for all the long time results, as the low frequencies of $g(u)$  may also  include some doubly resonant $ high\times high$ quadratic contributions. This leads us to 
make a better choice in the frequency truncation of the metric, and correspondingly in the source term $f$.  

Algebraically it will be easier to deal with  the
corresponding frequency $\lambda$ evolution, which may be taken as
\begin{equation}\label{para}
i \partial_t v_\lambda + \partial_x g_{[<\lambda]} \partial_x v_\lambda 
= f_\lambda, \qquad v_\lambda(0) = v_{0,\lambda},
\end{equation}
where the truncated metric is defined as 
\begin{equation}
g_{[<\lambda]} := P_{<\lambda} g (u_{\ll \lambda}).
\end{equation}
 It is important to note that we carefully  localize  $u$ to lower frequencies, 
rather than $g(u)$ directly; this is essential later on in order to exclude 
the doubly resonant interactions from the paradifferrential flow. On the other
hand, using the second order elliptic operator in divergence form
is more of a convenience, since  the corresponding commutator terms
play  a perturbative role.
\bigskip

One may express both the full equation and the linearized equation in terms 
of the paradifferential flow, in the form
\begin{equation}\label{para-full}
i \partial_t u_\lambda + \partial_x g_{[<\lambda]} \partial_x u_\lambda 
= N_\lambda(u), \qquad v_\lambda(0) = u_{0,\lambda},
\end{equation}
respectively
\begin{equation}\label{para-lin}
i \partial_t v_\lambda + \partial_x g_{[<\lambda]} \partial_x v_\lambda 
= N^{lin}_\lambda v, \qquad v_\lambda(0) = v_{0,\lambda},
\end{equation}

The source terms $N_\lambda(u)$ and $N^{lin}_\lambda$
 will be written explicitly later when needed. 
They are expected to play a perturbative role for the short-time results.

\bigskip

For the 2D global results there is one additional portion of the nonlinearity which plays a nonperturbative role, namely the balanced 
cubic part, which we separate from $N_\lambda$, writing instead
 the frequency localized evolution \eqref{para-full} in the form
\begin{equation}\label{full-lpara0}
i \partial_t u_\lambda + \partial_x g_{[<\lambda]} \partial_x u_{\lambda} = C_\lambda(\bfu,\bfu, \bfu) + F_\lambda(u) ,
\end{equation}
where we recall that the notation $\textbf{u}$ stands for  $\mathbf{u} =\left\{ u, \bar{u}\right\}$.
Following the discussion in Section~\ref{s:resonances}, 
if $\lambda \gg 1$ then the balanced cubic part is further decomposed into 
\[
C_\lambda(\bfu,\bfu, \bfu) = C_\lambda^{res}(u,\bu,u) + 
C_\lambda^{nr}(\bfu,\bfu,\bfu) + C_\lambda^{tr}(\bfu,\bfu,\bfu).
\]
Here the last term contains only transversal interactions
and may be harmlessly included in $F_\lambda$.
Correspondingly, we arrive at the final expansion 
\begin{equation}\label{full-lpara}
i \partial_t u_\lambda + \partial_x g_{[<\lambda]} \partial_x u_{\lambda} = C_\lambda^{res}(u,\bu,u) +  C^{nr}_\lambda(\bfu,\bfu, \bfu) + F_\lambda(u), 
\end{equation}
where the first two components on the right represent terms as follows:
\begin{itemize}
    \item $C_\lambda^{res}(u,\bu,u)$ contains balanced cubic terms
    with phase rotation symmetry, including the doubly resonant interactions
    \item  $C^{nr}_\lambda(\bfu,\bfu, \bfu)$ contains balanced nonresonant terms.
\end{itemize}
To explicitely make a choice for $C_\lambda^{res}$ we introduce a symmetric symbol $c_{diag}$, smooth on the corresponding dyadic scale
so that
\[
c_{diag}(\xi_1,\xi_2,\xi_3) := \left\{
\begin{aligned}
1 & \qquad \sum |\xi_i - \xi_j| \ll \la \xi_1\ra + \la \xi_2 \ra 
+ \la \xi_3 \ra 
\\
0 & \qquad  \sum |\xi_i - \xi_j| \gtrsim \la \xi_1\ra + \la \xi_2 \ra 
+ \la \xi_3 \ra \ .
\end{aligned}
\right.
\]
Then cubic doubly resonant part of the nonlinearity is defined in terms of the symbol $c(\cdot \, , \, \cdot \, , \, \cdot)$ of the form in \eqref{eq-cubic} as 
\begin{equation}\label{defcl}
c_\lambda(\xi_1,\xi_2,\xi_3) := p_\lambda(\xi_1-\xi_2+\xi_3) \, c_{diag}(\xi_1,\xi_2,\xi_3) c(\xi_1,\xi_2,\xi_3).
\end{equation}
In particular, one sees that the conservative condition implies 
that this symbol is real on the diagonal,
\[
\Im c_\lambda^{res}(\xi,\xi,\xi) = 0 , \qquad \xi \in \R.
\]

\medskip

Finally, we also briefly discuss $F_\lambda(u)$, which contains the remaining source terms in the paradifferential equation for $u_\lambda$, which can be classified as follows:

\begin{enumerate}[label=(\roman*)]
\item cubic $ high \times high$ terms of the form 
\[
\mu^2 P_\lambda L(\bfu_{\leq \mu}, \bu_\mu,  u_\mu) \qquad \mu \gtrsim \lambda.
\]

\item cubic $low \times high$ commutator terms of the form
\[
L(\bfu_{< \lambda}, \partial_x \bfu_{<\lambda}, \partial_x u_\lambda).
\]

\item cubic terms with only transversal interactions of the form
\[
\lambda^2 L(u_\lambda,u_\lambda,u_\lambda),
\qquad 
\lambda^2 L(u_\lambda,\bu_\lambda,\bu_\lambda),
\qquad 
\lambda^2 L(\bu_\lambda,\bu_\lambda,\bu_\lambda),
\qquad \lambda \gg 1
\]

 \item all cubic terms at frequency $\lambda = 1$,
 
\item quartic and higher terms of the form 
\[
\mu^2 L( \bfu_{\leq \mu}, \bfu_{\leq \mu}, \bu_\mu,  u_\mu) \qquad \mu \gtrsim \lambda .
\]

\end{enumerate}
In our analysis the $F_\lambda$ terms will play a perturbative role, though establishing that is not at all immediate, and  is instead part of the challenge.

\section{Interaction Morawetz estimates: the constant coefficient case} \label{s:im-cc}

Our main tool in the proof of the bilinear $L^2_{t,x}$ bounds
is provided  by the interaction Morawetz estimates, which in turn are based on density-flux identities for the mass and momentum. In this 
section we first review the constant coefficient setting, which is broadly based on \cite{PV}. However,  our choices for the fluxes are different,  customized to fit  the nonlinear setting, akin to the  work in the one dimensional case in \cite{IT-global}, \cite{IT-qnls}.

\subsection{Conservation laws in the flat case}
For clarity, we begin our discussion with the linear Schr\"odinger equation
\begin{equation}\label{eq:flat}
i u_t + \Delta u = 0, \qquad u(0) = u_0.
\end{equation}

For this we consider the following  conserved quantities, the mass
\[
\bM(u) = \int |u|^2 \,dx,
\]
and the momentum 
\[
\bP_j(u) = 2 \int \Im (\bar u \partial_j u) \,dx.
\]

To these quantities we associate corresponding densities
\[
M(u) = |u|^2, \qquad P_j(u) = i ( \bar u \partial_j u - u \partial_j \bar u).
\]

The densities here are not uniquely determined, and our choices are motivated by the conservation law computations
\begin{align}
    &\label{df-lin}
\partial_t M(u) = \partial_j P_j(u),\\
\
\
& \partial_t P_j (u) = \partial_m E_{jm}(u),
\end{align}
where we choose
\[
E_{jm}(u) :=  \partial_m u \partial_j \bar u +  \partial_j u \partial_m \bar u
- v \partial_j  \partial_m \bar v - \bar v 
\partial_j \partial_m v.
\]
The reader may compare this with \cite{PV}, where a different choice is made.

\medskip

The symbols of these densities viewed as bilinear forms  are
\[
m(\xi,\eta) = 1, \qquad p_j(\xi,\eta) = -(\xi_j+\eta_j),
\]
respectively
\[
\qquad e_{jm}(\xi,\eta) = (\xi_j+\eta_j)(\xi_m +\eta_m).
\]

% \[
% \qquad e_{jm}(\xi,\eta) = 
% 2(\xi_m \eta_j +\xi_j \eta_m) +
% \delta_{mj}
% (\xi-\eta)^2.
% \]

\subsection{Interaction Morawetz identities in the flat case}
We define the interaction Morawetz functional as
\begin{equation}\label{Ia-sharp-def-vv-flat}
\bI(u,v) :=   \iint a_j(x-y) (M(u)(x) P_j(v) (y) -  
P_j(u)(x) M(v) (y)) \, dx dy,
\end{equation}
with the bounded weights $a_j$ to be chosen later.
Assuming that $u,v$ solve the equation \eqref{eq:flat}, the time derivative of $\bI(u,v)$ is 
\begin{equation}\label{I-vv-flat}
\frac{d}{dt} \bI(u,v) =  \bJ^4(u,v) ,
\end{equation}
where
\[
\bJ^4(u,v) = \int  \partial_m a_j(x-y) \left(M(u) E_{jm}(v) - P_j(u) P_m(v) + M(v) E_{jm}(u)
-  P_j(v) P_m(u)\right)\, dxdy.
\]
Now we consider the choice for $a_j$. To get a good sign
and symmetry for $\bJ^4$, it is convenient to take
\[
a_j (x) = \partial_j a, \qquad \partial_m a_j = a_{mj},
\]
where $a$ is a well-chosen convex function.

For the quadrilinear form $J^4_{mj}$ which is the coefficient of $a_{jm}$ in $\bJ^4$ we write the symbol
\[
\begin{aligned}
J^4_{mj} = & \ (\xi^1_m + \xi^2_m)(\xi^1_j +\xi^2_j)
+ (\xi^3_m + \xi^4_m)(\xi^3_j +\xi^4_j) - (\xi^1_j +\xi^2_j) (\xi^3_m + \xi^4_m) -  (\xi^1_m +\xi^2_m) (\xi^3_j + \xi^4_j)
\\
= & \ (\xi^1_j+\xi^2_j -\xi^3_j -\xi^4_j)(\xi^1_m+\xi^2_m -\xi^3_m -\xi^4_m)
\\
= & \ 2(\xi^1_j - \xi^4_j)(\xi^3_m - \xi^2_m) + 2(\xi^1_m - \xi^4_m)(\xi^3_j - \xi^2_j) \qquad (\text{mod } \Delta^4 \xi),
\end{aligned}
\]
where terms with $\Delta^4 \xi$ factors do not contribute to the integral $\bJ^4_{mj}$.

So for instance if $a(x) = x^2$ 
then we get 
\[
J^4(\xi^1,\xi^2,\xi^3,\xi^4)= 8 ( \xi^1-\xi^4)\cdot (\xi^3-\xi^2) \qquad (\text{mod } \Delta^4 \xi),
\]
which yields the positive form
\[
\bJ^4(u,v) = 8 \| \partial (u \bar v)\|_{L^2}^2 .
\]
Unfortunately, we cannot use this, because for this choice the $a_j$'s are not bounded. 

More generally, the above formula for the symbol of $J^4_{mj}$ directly yields
\begin{equation}\label{J4-flat}
\bJ^4(u,v) = 2 \int a_{jm}(x-y) F_j \bar F_m\,  dx dy,
\end{equation}
where
\begin{equation}\label{Fj-flat}
F_{j}(x,y) = u(x) \partial_j \bar v(y) + 
\partial_j u(x) \bar v(y).
\end{equation}
This is clearly nonnegative definite if $a$ is convex.

\medskip

The special case when $u = v$ is also interesting.
Using the identity
\[
| u(x) \partial_j \bar v(y) + 
\partial_j u(x) \bar v(y)|^2 
= | u(x) \partial_j  v(y) -
\partial_j u(x) v(y)|^2 + 
\partial_j |u(x)|^2
 \partial_j |v(y)|^2,
\]
and its bilinear version, setting $u = v$ we obtain the inequality (see also \cite{PV})
\begin{equation}\label{J4-flat-u=v}
\bJ^4(u,u) \geq \int a_{jm}(x-y) \partial_j |u(x)|^2
 \partial_m |u(y)|^2 \, dx dy.
\end{equation}

For the function $a$ we will use $a(x) = |x|$,
in which case we have the Hessian
\[
D^2 a(x) = \frac{1}{|x|} (I - \frac{x}{|x|} \otimes \frac{x}{|x|}).
\]
To complete our computations we use the identity
\begin{equation}\label{IM-clean}
    \int a_{jm}(x-y) \partial_j |u(x)|^2
 \partial_m |u(y)|^2 \, dx dy = c_n \| |D|^\frac{3-n}2 |u|^2 \|_{L^2}^2,
\end{equation}
which is easily verified by interpreting $a_{jm}$ in \eqref{J4-flat-u=v} as kernels for appropriate multipliers. This finally  leads to 
\begin{equation}\label{J4-flat-u=v+}
\bJ^4(u,u) \gtrsim  \| |D|^\frac{3-n}2 |u|^2 \|_{L^2}^2.
\end{equation}
We remark, however, that the two expressions above are not equivalent, which is harmless here but will generate considerable difficulties in the nonlinear case.

We are also interested in frequency localized versions of these objects. 
Given a dyadic frequency $\lambda$, we start with a symmetric  symbol $a_\lambda(\xi,\eta)$, in the sense  that
\[
a_\lambda(\eta,\xi) = \ol{a_\lambda(\xi,\eta)},
\]
and  localized at frequency $\lambda$. Then we define an associated weighted mass density by
\[
M_\lambda(u) := A_\lambda(u,\bar u).
\]
We also define corresponding momentum symbols $p_{j,\lambda}$  by 
\[
p_{j,\lambda}(\xi,\eta) := -(\xi_j+\eta_j)\, a_\lambda(\xi,\eta).
\]
Then a direct computation yields the density-flux relations
\[
\frac{d}{dt} M_\lambda(u,\bar u) = \partial_j P_{j,\lambda}(u,\bar u), \qquad \frac{d}{dt} P_{j,\lambda}(u,\bar u) = \partial_m E_{mj,\lambda}(u,\bar u),
\]
\blue{with 
\[
\qquad e_{jm,\lambda}(\xi,\eta) := a_\lambda(\xi,\eta) (\xi_j+\eta_j)(\xi_m +\eta_m).
\]}

\section{Density-flux relations for the paradifferential flow}
\label{s:df}

In this section  we develop the density-flux identities 
for the paradifferential and the nonlinear flows, leaving the full 
interaction Morawetz analysis for the following section.

\subsection{ Density-flux identities for the paradifferential problem}
\
Now that we have the flat case as a model, we turn our attention to the paradifferential flow \eqref{para} for which we consider  solutions $v_\lambda$. 
For these we have 
\[
\begin{aligned}
\frac{d}{dt} M(v_\lambda) = &\ 2 \Re ( \partial_t v_\lambda \cdot \bar v_\lambda) 
\\
= &\ 2 \Im (\partial_j g^{jl}_{[<\lambda]} \partial_l v_\lambda   \cdot \bar v_\lambda) 
+ 2 \Im ( f_\lambda \bar v_\lambda)
\\
= & \  2 \partial_j [g^{jl}_{[<\lambda]} \Im (\partial_l v_\lambda   \cdot \bar v_\lambda)] 
+ 2 \Im ( f_\lambda \bar v_\lambda).
\end{aligned}
\]

Defining the covariant momenta as 
\[
P^j (v_\lambda) = g^{jl}_{[<\lambda]} P_l(v_\lambda),
\]
we rewrite this in the form
\begin{equation}\label{dens-flux-param}
\frac{d}{dt} M(v_\lambda) =  \partial_j [ P^j(v_\lambda)] + F^{para}_{\lambda,m}, \qquad F^{para}_{\lambda,m} = 2 \Im ( f_\lambda \bar v_\lambda).
\end{equation}

Similarly, one computes, with $g:= g_{[<\lambda]}$ and $v:= v_\lambda$,
\begin{equation*}
\begin{aligned}
\frac{d}{dt} P_j(v_\lambda) =  & \ 
2\Re ( \partial_k g^{kl} \partial_l v \partial_j \bv)
- 2\Re ( v \partial_j \partial_k g^{kl} \partial_l \bv) - 2\Re (f \partial_j \bv)
+ 2 \Re ( v \partial_j \bar f)
\\ = & \
2 \partial_k \Re (  g^{kl} \partial_l v \partial_j \bv - v
 \partial_j g^{kl} \partial_l    \bv)
- 2\Re (  g^{kl} \partial_l v \partial_j \partial_k \bv) +  2\Re ( \partial_k  v \partial_j g^{kl} \partial_l \bv)
\\
&\ - 2\Re (f \partial_j \bv)
+ 2 \Re ( v \partial_j \bar f)
\\ = & \
2 \partial_k \Re (  g^{kl} \partial_l v \partial_j \bv -
v \partial_j  g^{kl} \partial_l   \bv)
+  2\Re ( \partial_k  v (\partial_j g^{kl}) \partial_l \bar v)
- 2\Re (f \partial_j \bv)
+ 2 \Re ( v \partial_j \bar f)
\\ = & \ \partial_k [ E^k_j(v_\lambda)] + F^{para}_{j,\lambda,p},
\end{aligned}
\end{equation*}
where
\begin{equation}
 E^k_j(v_\lambda) :=    2  \Re (  g^{kl} \partial_l v \partial_j \bv -
v \partial_j  g^{kl} \partial_l   \bv),
\end{equation}
and
\begin{equation}
    F^{para}_{j,\lambda,p} :=  2\Re ( \partial_k  v (\partial_j g^{kl}) \partial_l \bar v)
- 2\Re (f \partial_j \bv)
+ 2 \Re ( v \partial_j \bar f).
\end{equation}

Now we switch to the covariant versions, 
\[
P^j(v_\lambda) = g^{jk}_{[<\lambda]} P_k(v_\lambda),
\]
for which we similarly write
\begin{equation}\label{dens-flux-parap}
\begin{aligned}
\frac{d}{dt} P^j(v_\lambda)
= & \ \partial_k [ E^{kj}(v_\lambda)] + F^{para,j}_{\lambda,p},
\end{aligned}
\end{equation}
where for $E^{kj}$ we have simply raised indices tensorially,
\begin{equation} \label{Ekj-para}
 E^{kj}(v_\lambda) =    2  \Re (   \partial^k v_\lambda \partial^j \bv_\lambda -
v_\lambda \partial^j  \partial^k   \bv_\lambda),
\end{equation}
but for the source term we get additional commutator terms,
\begin{equation}
    F^{j,para}_{\lambda,p} =   G^{j,para}_{\lambda,p}
- 2\Re (f_\lambda \partial^j \bv_\lambda)
+ 2 \Re ( v_\lambda \partial^j \bar f_\lambda),
\end{equation}
where the quadratic term $G^{j,para}_{\lambda,p}$
is given by 
\begin{equation}\label{G4}
   G^{j,para}_{\lambda,p} =  \partial_k v_\lambda   (\partial^j g^{kl}_{[<\lambda]}) \partial_l \bar v_{\lambda} + (\partial_t g^{jk}_{[<\lambda]}) P_k(v_\lambda) - (\partial_k g^{kl}_{[<\lambda]})E^j_l(v_\lambda).
\end{equation}
Fortunately its exact form is less important, as it will be perturbatively estimated in $L^1_{x,t}$ later on. Here we simply note 
that we can schematically write it in the form
\begin{equation}
   G^{j,para}_{\lambda,p}  = h(\bfu_{<\lambda}) \bfu_{<\lambda} \partial \bfu_{<\lambda} \partial v_{\lambda}    \partial \bv_{\lambda},
\end{equation}
which is all that will be needed.
\bigskip

\subsection{Nonlinear density-flux identities for the frequency localized mass and momentum} \label{s:df-2c}

The interaction Morawetz identities \eqref{dens-flux-param}, \eqref{dens-flux-parap} for the paradifferential equation suffice for the proof of our results in dimension $n \geq 3$. However, a finer analysis is needed in the most interesting case $n=2$, and particularly for the proof of the global result in Theorem~\ref{t:global2c}. This is the aim of this subsection.
We note that the analysis below is only needed at large frequencies, where
the $L^4_{t,x}$ Strichartz bound loses too much to allow us to treat cubic terms perturbatively.

 We begin with a simpler computation for the mass density of $u_\lambda$, using the equation \eqref{full-lpara}. We have 
\[
\begin{aligned}
\frac{d}{dt} M_\lambda(u) = &\ 2 \Re ( \partial_t u \cdot \bar u) 
\\
= &\ 2 \Im (\partial_j g_{[<\lambda]}^{jk} \partial_k u_\lambda   \cdot \bar u_\lambda) 
+ 2 \Im ( (C_\lambda^{res}(u,\bu, u)+  C_\lambda^{nr}(\bfu,\bfu, \bfu) + F_\lambda(\bfu)) \bar u_\lambda)
\\
= & \  2 \partial_j [g_{[<\lambda]}^{jk} \Im (\partial_k u_\lambda   \cdot \bar u_\lambda)] 
+ 2 \Im ( (C_\lambda^{res}(u,\bu, u)+  C_\lambda^{nr}(\bfu,\bfu, \bfu) + F_\lambda(\bfu)) \bar u_\lambda),
\end{aligned}
\]
which we rewrite in the form
\begin{equation}\label{dens-flux-m0}
\frac{d}{dt} M_\lambda(u) =  \partial_j [ P_\lambda^j(u)] 
+ C^{4,res}_{\lambda,m}(u,\bu,u,\bu) + C^{4,nr}_{\lambda,m}(\bfu,\bfu,\bfu,\bfu) +  F^4_{\lambda,m}(\bfu),
\end{equation}
where 
\[
C^{4,res}_{\lambda,m} (u,\bu,u,\bu) := 2 \Im ( (C_\lambda^{res}(u,\bu, u)  \bu_\lambda), 
\qquad 
C^{4,nr}_{\lambda,m}(\bfu,\bfu,\bfu,\bfu) := 2 \Im ( C_\lambda^{nr}(\bfu,\bfu, \bfu)\bu_\lambda)),
\]
\[
 F^4_{\lambda,m} (\bfu) := 2 \Im ( (F_\lambda(\bfu)  \bu_\lambda).
\]
The three source terms in \eqref{dens-flux-m0} will be treated differently in the last section.

The term $C^{4,res}_{\lambda,m}$ contains only interactions  which are localized at frequency $\lambda$, but include the parallel interactions where all frequencies are equal. Under the conservative assumption in Definition~\ref{d:conservative2}, we obtain the key symbol property 
\begin{equation}\label{c4-cancel}
   c^4_{\lambda,m}(\xi,\xi,\xi,\xi) = 0 .
\end{equation}
One should compare this with the 1D case in  \cite{IT-global},
\cite{IT-qnls}, where a stronger conservative condition
guaranteed a second order vanishing for $c^4_{\lambda,m}$ on the diagonal.
This in turn allowed us to remove this term via a quartic correction to the mass density, and a quartic correction to the mass flux, modulo perturbative errors. In the 2D case there are 
too many resonant interactions in order to allow for such a strategy. Instead, we will interpret the above vanishing condition as allowing us to get a symbol representation
\[
c^{4,res}_{\lambda,m}(\xi^1,\xi^2,\xi^3,\xi^4) = i(\xi_{even}-\xi_{odd}) \cdot r^{4}_{\lambda,m}(\xi^1,\xi^2,\xi^3,\xi^4) + i\Delta^4 \xi \cdot q^{4,res}_{\lambda,m}(\xi^1,\xi^2,\xi^3,\xi^4)
\]
where the symbols $r^{4}_{\lambda,m}$ and $q^{4,res}_{\lambda,m}$ are localized at frequency $\lambda$, have size $\lambda$ and are smooth on the $\lambda$ scale. 
At the multilinear form level this roughly yields a representation of the form
\[
C^{4,res}_{\lambda,m} (u,\bu,u,\bu) \approx \lambda L(\partial |u_\lambda|^2, u_\lambda, \bu_\lambda)
+ \partial Q^{4,res}_{\lambda,m}(u,\bu,u,\bu),
\]
which in turn will be used in order to treat it perturbatively, by separating it into an $L^1_{t,x}$ component and a flux correction.

The term $C^{4,nr}_{\lambda,m}$ contains a larger range of interactions  which are also localized at frequency $\lambda$, and are nonresonant. This yields a symbol representation
\[
c^{4,nr}_{\lambda,m}(\xi^1,\xi^2,\xi^3,\xi^4) = i \Delta^4 \xi^2 \cdot b^{4,nr}_{\lambda,m}(\xi^1,\xi^2,\xi^3,\xi^4) + i \Delta^4 \xi \cdot q^{4,nr}_{\lambda,m}(\xi^1,\xi^2,\xi^3,\xi^4)
\]
where the symbols $b^{4,nr}_{\lambda,m}$ and $q^{4,nr}_{\lambda,m}$ are localized at frequency $\lambda$, have size $1$, respectively $\lambda$ and are smooth on the $\lambda$ scale. 

This will allow us to remove it using both density and flux corrections
\[
C^{4,nr}_{\lambda,m} (\bfu,\bfu,\bfu,\bfu) \approx 
\partial_t B^{4,nr}_{\lambda,m}(\bfu,\bfu,\bfu,\bfu)
+  \partial_x Q^{4,nr}_{\lambda,m}(\bfu,\bfu,\bfu,\bfu) + F^{6,nr}_{\lambda,m}(\bfu)
\]
with a perturbative source term $F^{6,nr}_{\lambda,m}(\bfu)$ containing six-linear and higher order
terms arising from the nonlinear terms in the equation in the expansion of the time derivative of $B^{4,nr}_{\lambda,m}$.

Finally, $F^4_{\lambda,m}$ is at least quartic and only involves 
double transversal interactions, and will be estimated in $L^1_{t,x}$ and treated perturbatively.

\blue{Summarizing the above discussion, we arrive at a modified density-flux relation for the 
localized mass of the form
\begin{equation}\label{dens-flux-m0+}
\begin{aligned}
\frac{d}{dt} ( M_\lambda(u) - B^{4,nr}_{\lambda,m}(\bfu,\bfu,\bfu,\bfu))  = & \   \partial_j [ P_\lambda^j(u) +Q^{4,nr,j}_{\lambda,m}(\bfu,\bfu,\bfu,\bfu) + Q^{4,res,j}_{\lambda,m}(\bfu,\bfu,\bfu,\bfu)] 
\\ & \ 
+ \lambda L(\partial |u_\lambda|^2, u_\lambda, \bu_\lambda) +  F^4_{\lambda,m}(\bfu) + F^{6,nr}_{\lambda,m}(\bfu).
\end{aligned}
\end{equation}

Similarly, we have a corresponding  
 density-flux relation for the momentum
\begin{equation}\label{dens-flux-p}
\frac{d}{dt} P^j_{\lambda}(u) =  \partial_k  E^{jk}_{\lambda}(u) 
+ C^{j,4,res}_{\lambda,p}(u,\bu,u,\bu)  +  C^{4,nr}_{\lambda,p}(\bfu,\bfu,\bfu,\bfu)+
 F^{j,4}_{\lambda,p}(\bfu)
\end{equation}
with a cancellation relation akin to \eqref{c4-cancel} and a similar set of density and flux corrections for the middle terms on the right. This yields a modified density-flux relation for the localized momentum which is similar to \eqref{dens-flux-m0+},
\begin{equation}\label{dens-flux-p0+}
\begin{aligned}
\frac{d}{dt} ( P^j_\lambda(u) \!- \!B^{4,nr}_{\lambda,p}(\bfu,\bfu,\bfu,\bfu))  = & \   \partial_k [ E^{kj}_\lambda(u) +Q^{4,nr,kj}_{\lambda,p}(\bfu,\bfu,\bfu,\bfu) + Q^{4,res,kj}_{\lambda,p}(\bfu,\bfu,\bfu,\bfu)] 
\\ & \ 
+ \lambda^2 L^j(\partial |u_\lambda|^2, u_\lambda, \bu_\lambda) +  F^{4,j}_{\lambda,p}(\bfu) + F^{6,nr,j}_{\lambda,p}(\bfu).
\end{aligned}
\end{equation}
 The modified density flux relations \eqref{dens-flux-m0+} and \eqref{dens-flux-p0+}
 will play a leading role in the study of the two dimensional problem at low regularity
 in the last section of the paper.}

%%%%%%%%%%%%%%%%%%%%%%%%%%%%%%%%%%%%%%%%%%%%%%%%%%%%%%%%%%%%%%%%%%%%%%%%%%%%%%%%%%%%%%%%%%%%%%%%%%%%%%%%%%%%%%%%%%%%%%%%%%%%%%%%%%%%%%%%%%%%%%%%%%%%%%%%%%%

\section{Interaction Morawetz bounds for the linear paradifferential flow}
\label{s:para}

In this section,  as well as in  the next one, we study the linear paradifferential equation
associated to a solution $u$ to the quasilinear Schr\"odinger flow \eqref{qnls}. Here we prove bilinear $L^2_{t,x}$ bounds and $L^2$ well-posedness.
On the one hand, this is a key step in  the proof of both the local well-posedness results in Theorem~\ref{t:local2}, Theorem~\ref{t:local3+}
and the global results in Theorem~\ref{t:global3}, Theorem~\ref{t:global2}. On the other hand,
an enhanced version of these arguments will be used later 
in order to prove estimates for the full equation, and, in particular, to prove our final global result in Theorem~\ref{t:global2c}. In the next section we prove Strichartz estimates for the paradifferential equation.

To start with, we consider a one parameter family of frequency localized functions $v_\lambda$ that solve the inhomogeneous linear paradifferential equations \eqref{para}.  For now we assume no connection between these functions. Later we will apply these bounds in the case when  $v_\lambda=P_\lambda u$ or $v_\lambda = P_\lambda v$ where $v$ solves the linearized equation.

For these functions we have the conservation laws
\begin{equation}\label{df-v-m}
\partial_t M(v_\lambda) = \partial_k  P^k(v_\lambda)+ F^{para}_{\lambda,m},
\end{equation}
respectively
\begin{equation}\label{df-v-p}
\partial_t P^j(v_\lambda) = \partial_k  E^{kj} (v_\lambda) + F^{j,para}_{p,\lambda},
\end{equation}
where the source terms are 
\begin{equation}\label{f-v-m}
F^4_{\lambda,m} := 2 \Im M(f_\lambda,\bv_\lambda)
=
2\Im ( f_\lambda \bv_\lambda) ,  
\end{equation}
respectively 
\begin{equation}\label{f-v-p}
F^{j,para}_{p,\lambda}:=  G^{j,para}_{p,\lambda}
- 2\Re (f_\lambda \partial^j \bv_\lambda)
+ 2 \Re ( v_\lambda \partial^j \bar f_\lambda)
\end{equation}
with $G^{j,para}_{p,\lambda}$ given by \eqref{G4}.

We will measure each $v_\lambda$ based on the size of the initial data
and of the source term $f_\lambda$. However, following an idea introduced in \cite{IT-qnls}, rather than measure $f_\lambda$ 
directly, we will instead measure its interaction with $v_\lambda$ and its translates. So we denote
\begin{equation}\label{dl}
d_\lambda^2 := \sup_{\mu,\nu \approx \lambda} \| v_\mu(0)\|_{L^2}^2 +  \sup_{x_0 \in \R} \| v_\mu f_\nu^{x_0} \|_{L^1_{t,x}}.
\end{equation}
Using the more implicit parameter $d_\lambda$ allows for a larger range of source terms, which is absolutely essential in the present work.

Our main result here asserts that we can obtain energy and bilinear $L^2$ bounds for $v_\lambda$:

\begin{theorem}\label{t:para}
Assume that $u$ solves \eqref{qnls} in a time interval $[0,T]$,
and satisfies the bounds \eqref{uk-ee},\eqref{uab-bi-unbal}
and \eqref{uab-bi-bal} in the same time interval for some $s > \frac{n+1}2$. If $n=2$, we also assume that $T \lesssim \epsilon^{-6}$. Assume $v_\lambda$ solve \eqref{para}, and let $d_\lambda$ be as in \eqref{dl}. Then the following bounds hold for the functions $v_\lambda$ uniformly in $x_0 \in \R$:

\begin{enumerate}
    \item Uniform energy bounds:
\begin{equation}\label{v-ee}
\| v_\lambda \|_{L^\infty_t L^2_x} \lesssim d_\lambda 
\end{equation}

\item Balanced bilinear $(u,v)$-$L^2$ bound:
\begin{equation} \label{uv-bi-bal}
\| |D|^\frac{3-n}2 (v_\lambda \bu_\mu^{x_0})  \|_{L^2_{t,x}} \lesssim \epsilon d_{\lambda} c_\mu \lambda^{-s+\frac12} (1+ \lambda |x_0|),
\qquad \mu \approx \lambda
\end{equation}

 \item Unbalanced bilinear $(u,v)$-$L^2$ bounds:
\begin{equation} \label{uv-bi-unbal}
\| v_\lambda \bu_\mu^{x_0}  \|_{L^2_{t,x}} \lesssim \epsilon d_{\lambda} c_\mu \mu^{-s} \frac{\min\{\lambda,\mu\}^\frac{n-1}2}{(\lambda+\mu)^{\frac12}} 
 \qquad \mu \not \approx \lambda 
\end{equation}

\item Balanced bilinear $(v,v)$-$L^2$ bound:
\begin{equation} \label{vv-bi-bal}
\| |D|^\frac{3-n}2 (v_\lambda \bv_\mu^{x_0})  \|_{L^2_{t,x}} \lesssim  d_{\lambda} d_\mu \lambda^\frac12 (1+ \lambda |x_0|),
\qquad \mu \approx \lambda
\end{equation}

 \item Unbalanced bilinear $(v,v)$-$L^2$ bound:
\begin{equation} \label{vv-bi-unbal}
\| v_\lambda \bv_\mu^{x_0}  \|_{L^2_{t,x}} \lesssim  d_{\lambda} d_\mu\frac{\min\{\lambda,\mu\}^\frac{n-1}2}{(\lambda+\mu)^{\frac12}} 
 \qquad \mu \not \approx \lambda .
\end{equation}
\end{enumerate}
\end{theorem}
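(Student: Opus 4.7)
The energy bound \eqref{v-ee} is the starting point and is immediate from the mass density--flux identity \eqref{df-v-m}: integrating in $x$ kills the flux term, and the remaining source $2\Im\int f_\lambda \bar v_\lambda\, dx$ integrates in time to at most $2\|v_\lambda f_\lambda\|_{L^1_{t,x}} \leq 2 d_\lambda^2$ by the very definition \eqref{dl}. Combined with the initial-time bound in \eqref{dl}, this gives \eqref{v-ee}. This energy bound is then used in all subsequent steps.

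The bilinear bounds \eqref{uv-bi-bal}--\eqref{vv-bi-unbal} rest on an interaction Morawetz identity, generalizing the flat case of Section \ref{s:im-cc}. For two frequency packets $v_\lambda$ and $w_\mu$, where $w$ will be either $v$ or $u$, and a weight $a = a(x)$ to be chosen, we consider
\[
\bI_a(v_\lambda, w_\mu) := \iint a_j(x-y-x_0)\bigl(M(v_\lambda)(x) P^j(w_\mu)(y) - P^j(v_\lambda)(x) M(w_\mu)(y)\bigr)\, dx\, dy .
\]
Differentiating in time and using the density-flux identities \eqref{df-v-m}--\eqref{df-v-p} for $v_\lambda$ (and the analogous identities for $u_\mu$ in the $(u,v)$ case) produces a decomposition
\[
\tfrac{d}{dt}\bI_a = \bJ^4_a(v_\lambda, w_\mu) + \bK^4_a + \bL^4_a ,
\]
where $\bJ^4_a$ is the principal Morawetz quadrilinear form of the type \eqref{J4-flat}, $\bK^4_a$ collects the perturbative commutator contributions arising from the metric $g_{[<\lambda]}-I$ and from the term $G^{j,para}_{p,\lambda}$ in \eqref{G4}, and $\bL^4_a$ is the source-term contribution involving $f_\lambda$ (or, in the $(u,v)$ case, the nonlinear source $N_\mu(u)$ for $u_\mu$). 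The plan is: integrate the identity in time, bound $\bI_a$ above by the energies via the boundedness of $a_j$, absorb $\bK^4_a$ using the bootstrap hypotheses, control $\bL^4_a$ via the definition \eqref{dl}, and read off the desired bilinear bound from the coercive lower bound on $\bJ^4_a$.

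For the balanced case $\mu \approx \lambda$, the choice $a(x) = |x|$ (exactly as in \eqref{IM-clean}) makes $\bJ^4_a$ a positive quadratic form whose output is equivalent to $\||D|^{(3-n)/2}(\partial v_\lambda \cdot \ol{\partial w_\mu^{x_0}})\|_{L^2}^2$, which on account of frequency localization is comparable to $\lambda^{2}\||D|^{(3-n)/2}(v_\lambda \ol{w_\mu^{x_0}})\|_{L^2}^2$. This produces the $\lambda^{1/2}$-type gain on the left of \eqref{vv-bi-bal}, \eqref{uv-bi-bal}. The $(1+\lambda|x_0|)$ factor enters through $\bK^4_a$, since the metric perturbation is evaluated at spatially separated points: translating one factor by $x_0$ opens a gap of size $\lambda|x_0|$ in the transversality estimate, reflecting the sensitivity observed in the discussion following \eqref{uab-bi-boot-trans}. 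For the unbalanced case $\mu \ll \lambda$, one chooses a weight adapted to the frequency gap (schematically $a$ whose derivative lives on scale $\mu^{-1}$), so that $\bJ^4_a$ extracts the transversality factor $\min\{\lambda,\mu\}^{n-1}/(\lambda+\mu)$ instead. In the $(u,v)$ case one further expands the $u_\mu$-density-flux relation via \eqref{para-full}, and the additional nonlinear source is handled perturbatively by the bootstrap hypotheses \eqref{uk-ee-boot}--\eqref{uab-bi-bal-boot}.

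\textbf{Main obstacle.} The hard part is the perturbative control of $\bK^4_a$, and to a lesser extent $\bL^4_a$, at low regularity. The commutator term $G^{j,para}_{p,\lambda}$ from \eqref{G4} is schematically $h(\bfu_{<\lambda})\bfu_{<\lambda}\partial\bfu_{<\lambda}\,\partial v_\lambda\, \ol{\partial v_\mu}$, and controlling its integral against the Morawetz weight requires pairing the $u$-bilinear bootstrap bounds against high-frequency derivative factors of $v$. In dimension $n \geq 3$ the $L^4$ Strichartz bound \eqref{uk-l4-3d} sits strictly below the sharp Strichartz line and gives enough room to close the estimate globally in time. In dimension $n=2$, however, the corresponding $L^4_{t,x}$ bound \eqref{uk-Str2} lies on the sharp Strichartz line; here the best one can do is absorb a polynomial-in-$T$ loss against the smallness of $\epsilon$, which is precisely why the hypothesis $T \lesssim \epsilon^{-6}$ appears in the 2D case. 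The global 2D low-regularity result in Theorem \ref{t:global2c} will require going beyond this by exploiting the conservative structure and the cubic doubly-resonant cancellation \eqref{c4-cancel} in the refined identities of Section \ref{s:df}.
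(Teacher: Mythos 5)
Your overall architecture (energy bound from the mass density--flux identity, then an interaction Morawetz functional with weight $a(x)=|x|$, upper bounds for the fixed--time functional $\bI$ and the perturbative terms, coercivity of the main quadrilinear form) is indeed the paper's strategy, but the balanced--case coercivity claim is wrong and would make the argument fail. The form $F^j=v_\lambda\,\partial^j\bar w_\mu^{x_0}+\partial^j v_\lambda\,\bar w_\mu^{x_0}$ carries the \emph{difference} of the two input frequencies, and for $\mu\approx\lambda$ this difference ranges all the way down to zero (these are exactly the near--parallel, doubly resonant interactions); hence $\bJ^4_{main}$ is \emph{not} comparable to $\lambda^{2}\||D|^{(3-n)/2}(v_\lambda\bar w_\mu^{x_0})\|_{L^2}^2$. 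If it were, combining with $|\bI|+|\int\bK|\lesssim(\lambda+\mu)d_\lambda^2d_\mu^2$ would give a balanced bound a full factor of $\lambda$ stronger than \eqref{vv-bi-bal}, which already fails for the flat flow (test on a frequency--$\lambda$ wave packet). What the main term actually controls in the balanced case is only $\iint a_{jk}(x-y)\partial_j|v_\lambda|^2(x)\,\partial_k|v_\lambda|^2(y)$, i.e.\ $\||D|^{(3-n)/2}|v_\lambda|^2\|_{L^2}^2$ via \eqref{J4-flat-u=v} and \eqref{IM-clean}, with no $\lambda$ gain; the $\lambda^{1/2}$ in \eqref{vv-bi-bal}, \eqref{uv-bi-bal} comes from the $O(\lambda+\mu)$ size of the momentum density in $\bI$ and $\bK$, and the two--function, translated version is obtained by polarization, applying the symmetric estimate to $w_\lambda=a v_\lambda+b v_\mu^{x_0}$; the factor $(1+\lambda|x_0|)$ then enters through the metric mismatch $g_{[<\mu]}-g_{[<\mu]}^{x_0}$ in the source of the combined equation, not generically through $\bK$. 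Also, in the unbalanced case the paper does not change the weight: it keeps $a(x)=|x|$, localizes $v_\lambda$ in angle (after checking that bounded multipliers preserve \eqref{dl}), foliates $|x|=c_n\int_{\S^n}|x\cdot\theta|\,d\theta$, and inserts a cutoff at scale $\mu^{-1}$ on the pointwise nonnegative main term; without an angular separation step your schematic weight choice does not by itself produce the directional $\lambda^2$ factor. You also never address the error term $\bR^4_1$, where the metric is evaluated at two widely separated points and must be estimated uniformly in $x_0$; this requires the kernel splitting at scale $\mu^{-1}$ and the difference representation \eqref{eq:translations} so as to retain access to bilinear $L^2$ bounds, and it is not covered by your generic description of the commutator contributions.

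Second, your ``main obstacle'' paragraph misplaces both the hard part and the origin of the hypothesis $T\lesssim\epsilon^{-6}$. The commutator term $G^{j,para}_{\lambda,p}$ of \eqref{G4} is handled in Lemma~\ref{l:Fmp} by two unbalanced bilinear $L^2$ bounds alone, uniformly in $T$ and with no Strichartz input, because $u_{<\lambda}$ against $v_\lambda$ is always a transversal pairing. The substantial additional work, which your outline compresses into one clause, is Proposition~\ref{p:N-lambda}: to run the $(u,v)$ bounds one must regard $u_\mu$ itself as a solution of the paradifferential flow \eqref{para-full} and prove $\|N_\lambda(u)\,\bar u_\lambda^{x_0}\|_{L^1_{t,x}}\lesssim\epsilon^4c_\lambda^2\lambda^{-2s}$, through a case--by--case analysis of the cubic and higher source terms. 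The restriction $T\lesssim\epsilon^{-6}$ in $n=2$ is used only there, for the balanced, possibly resonant cubic self--interactions (case B3b(iv)), where the sharp--line $L^4_{t,x}$ bound forces a $T^{1/3}$ loss; it has nothing to do with $\bK$ or the metric commutators. Without this proposition your plan does not yield the $(u,v)$ estimates \eqref{uv-bi-bal}, \eqref{uv-bi-unbal} at all, since the bootstrap hypotheses on $u$ alone do not control the source of the $u_\mu$ equation in the $L^1$, translation--tested sense required by \eqref{dl}.
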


Before proceeding to the proof, we remark that the unbalanced bounds
are more robust, and do not require any relation between the metric in the $v_\lambda$ equation and the one in the $v_\mu$ equation:

\begin{corollary}\label{c:para}
The unbalanced bounds \eqref{uv-bi-unbal} and \eqref{vv-bi-unbal}
are still valid if the equation \eqref{para} for $v_\lambda$ 
is replaced by the linear Schr\"odinger equation   
\[
(i \partial_t+\Delta) v_\lambda = f_\lambda.
\]
\end{corollary}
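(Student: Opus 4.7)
The plan is to re-examine the interaction Morawetz argument underlying Theorem~\ref{t:para}, specializing to the unbalanced regime $\mu \not\approx \lambda$, with the key observation that the flat Schr\"odinger equation is simply the paradifferential flow \eqref{para} with the truncated metric $g_{[<\lambda]}$ replaced by the identity $I_n$. In particular, the density-flux identities \eqref{df-v-m} and \eqref{df-v-p} for the mass and covariant momentum of $v_\lambda$ remain valid, with $g \equiv I_n$, which makes all the metric-commutator contributions to $G^{j,para}_{p,\lambda}$ in \eqref{G4} vanish identically; only the source terms involving $f_\lambda$ survive. Hence the density-flux setup on the $v_\lambda$ side becomes strictly simpler than in the proof of Theorem~\ref{t:para}.

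With this in hand, I would assemble the interaction Morawetz functional exactly as in the proof of Theorem~\ref{t:para}, pairing the mass and momentum of $v_\lambda$ (computed with the flat identities) against those of $u_\mu$ or $v_\mu$ (computed with the paradifferential identities for the appropriate truncated metric), mediated by a bounded weight $a_j(x-y)$ adapted to $\min\{\lambda,\mu\}$. Differentiating in time and using the two density-flux pairs, one obtains the positive quadrilinear $\bJ^4$ form on the left, plus source contributions involving $f_\lambda$, the paradifferential source in the $v_\mu$ equation, and the metric commutator $G^{j,para}_{p,\mu}$ on the $u_\mu$/$v_\mu$ side only.

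The key point is that the coercivity of $\bJ^4$ giving the bilinear gain $\mu^{(n-1)/2}(\lambda+\mu)^{-1/2}$ in the unbalanced regime is entirely kinematic: it comes from the symbol computation in Section~\ref{s:im-cc}, which expresses $J^4_{mj}$ in terms of the frequency differences and only requires the two flows to be Schr\"odinger-like. Transversality is forced by the frequency separation $\mu \not\approx \lambda$, not by any matching of the two metrics, so the same choice of weight $a$ yields the desired lower bound on $\bJ^4$ as in the purely paradifferential case. The error terms on the $u$-side are controlled exactly as in the proof of Theorem~\ref{t:para}, using the bootstrap bounds \eqref{uk-ee-boot}--\eqref{uab-bi-bal-boot}, while the $v_\lambda$-side errors reduce to the $f_\lambda$ contributions absorbed into $d_\lambda$ via \eqref{dl}.

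The main potential obstacle is ensuring that the mismatch between the flat flow on one side and the truly variable-coefficient flow on the other does not produce a new, uncontrollable boundary or bulk term when integrating by parts against $a_j(x-y)$. However, because the integration by parts is performed separately in $x$ and $y$, and because the flat identities for $v_\lambda$ are derived in $x$ while the paradifferential identities for $u_\mu$ are derived in $y$, the two sides never interact algebraically and no cross-metric commutator is generated. Consequently the proof of Theorem~\ref{t:para} transfers verbatim, modulo the simplification on the $v_\lambda$-side, yielding \eqref{uv-bi-unbal} and \eqref{vv-bi-unbal} in the flat setting.
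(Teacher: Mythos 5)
Your proposal is correct and follows essentially the same route as the paper, which simply observes that the unbalanced portion (case III C) of the proof of Theorem~\ref{t:para} applies without change: that argument never uses any relation between the metric in the $v_\lambda$ equation and the one in the $v_\mu$ equation, treats $g-I$ purely perturbatively, and with $g_{[<\lambda]}$ replaced by the identity the $v_\lambda$-side commutator terms in \eqref{G4} simply vanish. One small imprecision: the error term $\bR^4_1$ does mix the two metrics (it contains $g_{[<\mu]}(y+x_0)-g_{[<\lambda]}(x)$), so it is not literally true that ``no cross-metric term is generated,'' but as in the paper this term is estimated by comparing each metric to the identity, so your conclusion stands.
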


This should be seen as a corollary of the proof of the theorem, and not of the theorem itself. \blue{ Indeed, the unbalanced case of the proof of the theorem applies in this case without any change.  The above corollary applies in particular if $v_\lambda$ is an $L^2$
solution to the homogeneous Schro\"dinger equation. That leads us to a follow-up
corollary:

\begin{corollary}\label{c:para1}
The unbalanced bounds \eqref{uv-bi-unbal} and \eqref{vv-bi-unbal}
are still valid if $v_\lambda \in U^2_\Delta$, with $d_\lambda = \|v_\lambda \|_{U^2_\Delta}$.
\end{corollary}

Here it suffices to consider the case when $v_\lambda$ is an $U^2_\Delta$ atom.
But the $U^2_\Delta$ atoms are concatenations of $L^2$ solution to the homogeneous Schro\"dinger equation with $\ell^2$ summation, so the bilinear $L^2_{t,x}$ bounds reduce to the case  $v_\lambda$ is an $L^2$ solution to the homogeneous Schro\"dinger equation,
which is included in Corollary~\ref{c:para}.
}

\bigskip

We now return to the proof of the theorem.

\begin{proof}[Proof of Theorem~\ref{t:para}]
 In order to treat the
dyadic bilinear $(u,v)$ and $(v,v)$ bounds at the same time,
we write the equation for $u_\lambda$ in the form 
\eqref{para-full} where we estimate favourably the source term $N_\lambda$, and prove 
the following estimate:

\begin{proposition}\label{p:N-lambda}
 a) Let $n \geq 3$ and $s > \frac{n+1}{2}$. Assume that the function $u$ satisfies   the bounds \eqref{uk-ee}, \eqref{uab-bi-unbal}  
 and \eqref{uab-bi-bal}
 in a time interval $[0,T]$. Then for $\epsilon$ small enough, the functions $N_\lambda(u)$ in \eqref{para-full} satisfy 
 \begin{equation}\label{good-nl}
\| N_\lambda(\bfu) \bu_\lambda^{x_0}\|_{L^1_{t,x}} \lesssim \epsilon^4 c_\lambda^2 \lambda^{-2s}.
 \end{equation}

 b) Let $n = 2$ and $s > \frac{n+1}{2}$. Then \eqref{good-nl}
 holds under the additional assumption $T \leq \epsilon^{-6}$.
 Furthermore, for any $T$ we have the partial bound
 \begin{equation}\label{good-fl}
\| F_\lambda(\bfu) \bu_\lambda^{x_0}\|_{L^1_{t,x}} \lesssim \epsilon^4 c_\lambda^2 \lambda^{-2s}.
 \end{equation}
\end{proposition}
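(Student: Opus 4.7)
The strategy is to decompose $N_\lambda$ into the five types of source terms (i)--(v) enumerated in Section~\ref{s:lin}, and for each resulting term to pair with $\bar u_\lambda^{x_0}$ so as to produce a four-fold spacetime integral. We then split each such integral by Cauchy--Schwarz into a product of two bilinear $L^2$ norms, chosen so that each factor is directly controlled by the bootstrap bounds \eqref{uk-ee-boot}, \eqref{uab-bi-bal-boot}, \eqref{uab-bi-unbal-boot}. Since each bilinear $L^2$ factor carries a factor of $C^2\epsilon^2$, the four-fold product produces $C^4\epsilon^4$, matching the target on the right hand side of \eqref{good-nl} up to the recoverable constant $C^4$. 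All dyadic summations in the arising Littlewood--Paley decompositions will converge precisely at the threshold $s>(n+1)/2$, which is the natural scaling obstruction for this argument.

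\textbf{Part (b): bounding $F_\lambda$ for all $T$.} The terms of type (i), (ii) and (v) all contain one factor $u_{\leq\mu}$ (or $u_{<\lambda}$) and pair naturally as \emph{high-high} $\times$ \emph{(low or high)-$\bar u_\lambda^{x_0}$}. For a dyadic piece $u_a \bu_\mu$ with $a\ll\mu$ (resp.\ $a\approx\mu$), we use the unbalanced bound \eqref{uab-bi-unbal-boot} (resp.\ balanced \eqref{uab-bi-bal-boot}); the complementary pair $\mu^2 u_\mu \bar u_\lambda^{x_0}$ is controlled by the bilinear bound for frequencies $\mu$ and $\lambda$ (which are either balanced when $\mu\approx\lambda$ or unbalanced when $\mu\gg\lambda$). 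Summing first over $a$ and then over $\mu\gtrsim\lambda$, the geometric series converge because $s>(n+1)/2$. Type (iii) transversal cubic terms are estimated directly via the balanced bound with $x_0=0$: since all frequencies are $\approx\lambda$ but the transversal structure excludes the parallel-wave diagonal, the two bilinear factors decouple cleanly. Type (iv) is a single low-frequency term and is immediate. This yields \eqref{good-fl} uniformly in $T$.

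\textbf{Part (a) in dimension $n\geq 3$.} In this case the extra terms $C^{res}_\lambda+C^{nr}_\lambda$ not contained in $F_\lambda$ are balanced cubic contributions of the schematic form $\lambda^2 L(u_\lambda,\bar u_\lambda,u_\lambda)\cdot \bar u_\lambda^{x_0}$. The weight $|D|^{(3-n)/2}$ in \eqref{uab-bi-bal-boot} has non-positive order, so a Bernstein-type estimate gives
\[
\|u_\lambda \bu_\lambda^{x_0}\|_{L^2_{t,x}} \lesssim \lambda^{(n-3)/2}\bigl\| |D|^{(3-n)/2}(u_\lambda \bu_\lambda^{x_0})\bigr\|_{L^2_{t,x}} \lesssim C^2\epsilon^2 c_\lambda^2 \lambda^{-2s+(n-2)/2}(1+\lambda|x_0|),
\]
since the product is frequency-localized at scales $\lesssim\lambda$. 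Pairing two such factors via Cauchy--Schwarz (one with $x_0$, one without) and multiplying by $\lambda^2$ gives a bound of the form $\epsilon^4 c_\lambda^4 \lambda^{n-3-4s+2}$, which lies strictly below the target $\epsilon^4 c_\lambda^2\lambda^{-2s}$ precisely when $s>(n+1)/2$.

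\textbf{Part (a) in dimension $n=2$: the main obstacle.} Here the weight $|D|^{1/2}$ in the balanced bound is a genuine derivative, so the Bernstein conversion used above costs a power of $\lambda$ that cannot be compensated by $s>3/2$. This is exactly where the restriction $T\lesssim\epsilon^{-6}$ is needed. To control the four-fold product associated to a balanced cubic term in $N_\lambda$, we split one of the bilinear factors by H\"older in time, using the $L^4$ Strichartz bootstrap \eqref{uk-Str2-boot} together with the mass bound \eqref{uk-ee-boot}; this produces a factor of $T^{1/2}\epsilon^3 c_\lambda^3$ (scaling-wise) in one of the two CS pieces, while the other is still bounded by the balanced bilinear estimate. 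The resulting $T^{1/2}\epsilon^3$ is precisely absorbed by the hypothesis $T\lesssim\epsilon^{-6}$, allowing the bound to close at the target level $\epsilon^4 c_\lambda^2\lambda^{-2s}$ for $s>3/2$. This is the delicate point of the argument, and the fact that the same $T^{1/2}\epsilon^3$ mechanism is \emph{not} required for $F_\lambda$ alone is what explains why part (b) holds for arbitrary $T$ while part (a) in 2D does not.
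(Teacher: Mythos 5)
There is a genuine gap, and it sits exactly at the balanced interactions that your Cauchy--Schwarz scheme is supposed to handle. Your plan is to split every quadrilinear integral into two bilinear $L^2$ factors controlled by the bootstrap bounds. But whenever all frequencies are comparable (case B3-type terms, and the quartic terms with comparable frequencies), one of the two factors necessarily contains the translate $\bu_\lambda^{x_0}$ paired with an untranslated copy of $u_\lambda$, and the only available estimate for such a pair is the \emph{translated} balanced bilinear bound \eqref{uab-bi-bal}/\eqref{uab-bi-bal-boot}, which carries the factor $(1+\lambda|x_0|)$. In your $n\geq 3$ computation you write the bound $\epsilon^4 c_\lambda^4\lambda^{n-3-4s+2}$ after "pairing two such factors (one with $x_0$, one without)", i.e.\ you have silently dropped $(1+\lambda|x_0|)$; since \eqref{good-nl} must hold uniformly in $x_0$ (this uniformity is what feeds the quantity $d_\lambda$ in \eqref{dl}), this step does not close. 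The paper avoids the issue entirely for balanced terms: it never uses a translated balanced bilinear bound there, but instead converts the untranslated balanced bound into the translation-invariant Strichartz-type estimates \eqref{uk-l4-3d} (for $n\geq3$: four $L^4_{t,x}$ norms) and \eqref{uk-l4-2d} (for $n=2$: $L^4_tL^8_x$ combined with energy and Bernstein), so the translate costs nothing because $\|u_\lambda^{x_0}\|_{L^4}=\|u_\lambda\|_{L^4}$.

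A second, related problem occurs in your part (b) in dimension $n=2$: for pairs $u_a\bu_\mu$ with $a\approx\mu$ you invoke "the balanced bound", but in $n=2$ that bound controls $\||D|^{1/2}(u_a\bu_\mu)\|_{L^2}$, and since a product of two frequency-$\mu$ functions may concentrate at low output frequency, the weight cannot be inverted to give $\|u_a\bu_\mu\|_{L^2}$ (the paper stresses precisely that \eqref{uab-bi-bal-boot} yields an $L^4$ bound only for $n\geq3$). This inversion is legitimate only for transversal (frequency-separated) pairs, which is why the paper uses two bilinear bounds only in case B3b(iii), and switches to $L^4_tL^8_x$ + energy + Bernstein for the remaining balanced quartic terms B3b(i)--(ii). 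Finally, your 2D resonant mechanism ($T^{1/2}\epsilon^3$ absorbed by $T\lesssim\epsilon^{-6}$) is only sketched "scaling-wise" and again leans on a translated balanced bilinear factor; the paper instead bounds these terms by $\lambda^2 T^{1/3}\|u_\lambda\|_{L^6_tL^4_x}^4$, with $L^6_tL^4_x$ obtained by interpolating the lossless $L^4_tL^8_x$ bound with the energy bound \eqref{uk-ee-boot}, which keeps everything translation-invariant. So while your overall decomposition of $N_\lambda$ and your use of unbalanced bilinear bounds for the genuinely unbalanced/transversal terms match the paper, the treatment of all balanced interactions needs to be redone along these translation-invariant lines.
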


\begin{proof}  We start the proof without discriminating between the spatial dimensions $n=2$ and $n\geq 3$; we will do so when the estimates require it.  The bound in \eqref{good-nl} relies on a careful analysis of the frequency localized source term $N_{\lambda}(\bfu)$.
The key features of the multilinear terms contained in $N_\lambda(u)$ are :
\begin{itemize}
\item  they are at least cubic
\item they have exactly two derivatives
\item the highest frequency is at least $\lambda$.
\item the two highest frequencies have to be comparable when 
either
\begin{itemize}
    \item the two derivatives 
apply to the highest frequency or 
\item  the highest frequency is $\gg \lambda$.
\end{itemize}
\end{itemize}
This leads to a large number of cases which are also informed by the comments in Section 4 which discuss a potential partition of $N_{\lambda}$, and which we  next organize  efficiently. 
For this, recall that 
\[
N_{\lambda}(\bfu) =  C_\lambda(\bfu,\bfu, \bfu) + F_\lambda(\bfu),
\]
where the entries of $C_{\lambda}$ are localized around frequency $\lambda$ (but not strictly localized), and where $F_{\lambda}$ contain not only the remaining cubic source terms but also higher order terms. The cubic terms in $F_{\lambda}$ are either unbalanced, or balanced transversal, and do not necessarily have the phase rotation symmetry property.  We begin discussing all cases, one at a time:

\medskip

\noindent \textbf{A.} The highest input frequency is $\mu \gg \lambda$. In this case we must have at least two frequency $\mu$ factors. We separate this case further.

\noindent A1.  Terms which have at least one lower frequency $\mu_1 \ll \mu$. For brevity we consider a typical term of the form 
\[
N_\lambda^{A1} =  \bfu_{\mu_1} \partial^2_x \bfu_\mu^2 ,
\]
which can be expressed as a multilinear expression in the form
\[
N_\lambda^{A1} =  \mu^2 L(\bfu_{\mu_1}, \bfu_\mu, \bfu_{\mu}).
\]
Here the estimate goes as follows
\[
\| N_{\lambda}^{A1} \bar u_\lambda^{x_0} \|_{L^1_{t,x}} 
\lesssim \mu^2 \|L( \bfu_{\mu},  \bfu_\lambda^{x_0}) \|_{L^2_{t,x}} 
\| L(\bfu_{\mu_1}, \bfu_\mu) \|_{L^2_{t,x}} \lesssim 
\epsilon^4 c_\lambda c_{\mu_1} c_\mu^2 \,  \mu^{1-2s} \lambda^{-s+\frac{n-1}{2}} 
\mu_1^{-s+\frac{n-1}{2}}.
\]
Here we have arranged the factors in order to obtain two unbalanced pairs, to which we then applied the unbalanced bilinear $L^2_{t,x}$ bounds in \eqref{uab-bi-unbal}. The summation with respect to $\mu_1$ and $\mu$ is straightforward. One last observation is regarding the complex conjugates which do not matter in obtaining this bound.

\medskip

\noindent A2. Terms with exactly three $\mu$ frequencies. Here the  three $\mu$ frequencies must add up to a frequency $\ll \mu$, so at least two must be $\mu$ separated, and the above argument still applies.

\medskip

\noindent A3. Terms with at least four $\mu$ frequencies, e.g.
\[
N_\lambda^{A3} =  \bfu_\mu^3 \partial_{\mu}^2 \bfu_\mu ,
\]
which we can re-express as 
\[
N_\lambda^{A3} =  \mu^2 L( \bfu_\mu,  \bfu_\mu,\bfu_\mu,\bfu_\mu).
\]
To get the desired estimate we group the two unbalanced terms and use \eqref{uab-bi-unbal}, 
\[
\begin{aligned}
\| N_{\lambda}^{A3} \bar u_\lambda^{x_0} \|_{L^1_{t,x}} 
&\lesssim \mu^2\| L(\bfu_{\mu} , u_\lambda^{x_0}) \|_{L^2_{t,x}}  \| L(\bfu_{\mu} , \bfu_\mu, \bfu_{\mu}) \|_{L^2_{t,x}}\\
&\lesssim \epsilon^2c_{\lambda}c_{\mu} \,\mu^{-s+\frac{3}{2}}\lambda^{-s+\frac{n-1}{2}} \| L(\bfu_{\mu} , \bfu_\mu, \bfu_{\mu}) \|_{L^2_{t,x}}.
\end{aligned}
\]
Then for the remaining factor we need to discuss separate ways of obtaining the bounds when $n=2$, respectively $n\geq 3$. We begin with the seemingly easier case $n\geq 3$, and 
bound the remaining cubic terms using the $L^4_{t,x} $ Strichartz bound \eqref{uk-l4-3d}  for two copies of $u_{\mu}$ and Bernstein's inequality  for the last $u_{\mu}$
\[
\| L(\bfu_{\mu} , \bfu_\mu, \bfu_{\mu}) \|_{L^2_{t,x}} \lesssim \Vert u_\mu\Vert_{L^4_{t,x}}^2\Vert u_{\mu}\Vert_{L^{\infty}_{t,x}}\lesssim \epsilon^2 c_{\mu}^2\,  \mu^{-s+\frac{n-2}{2}} \mu^{\frac{n}{2}}\Vert u_{\mu}\Vert_{L^\infty_tL^2_x}\lesssim \epsilon^3 c_{\mu}^3\mu^{-2s+n-1}.
\]
We summarize the bound we obtained in  the $n\geq 3$ case below
\[
\| N_{\lambda}^{A3} \bar u_\lambda^{x_0} \|_{L^1_{t,x}} 
\lesssim \epsilon^5 c_{\lambda}c_{\mu}^4 \, \mu^{-3s+n+\frac{1}{2}} \lambda^{-s+\frac{n-1}{2}},
\]
which suffices.

It remains to bound the same expression when  $n=2$:
\[
\| N_{\lambda}^{A3} \bar u_\lambda^{x_0} \|_{L^1_{t,x}} 
\lesssim \mu^2\| L(\bfu_{\mu} , u_\lambda^{x_0}) \|_{L^2_{t,x}}  \| L(\bfu_{\mu} , \bfu_\mu, \bfu_{\mu}) \|_{L^2_{t,x}}\lesssim \epsilon^2c_{\lambda}c_{\mu} \,\mu^{-s+\frac{3}{2}}\lambda^{-s+\frac{1}{2}} \| L(\bfu_{\mu} , \bfu_\mu, \bfu_{\mu}) \|_{L^2_{t,x}}. 
\]
For the last trilinear term we rely on the Strichartz bound \eqref{uk-l4-2d} as well on the Bernstein's inequality
\[
\| L(\bfu_{\mu} , \bfu_\mu, \bfu_{\mu}) \|_{L^2_{t,x}}\lesssim \Vert u_{\mu}\Vert_{L^4_tL^8_x}^2 \Vert u_{\mu}\Vert_{L^{\infty}_tL^{4}_x}\lesssim  \epsilon^3c_{\mu}^3\, \mu^{-3s+1}.
\]

Hence, when we are in spatial dimension $n=2$ we get
\[
\| N_{\lambda}^{A3} \bar u_\lambda^{x_0} \|_{L^1_{t,x}} 
\lesssim \mu^2\| L(\bfu_{\mu} , u_\lambda^{x_0}) \|_{L^2_{t,x}}  \| L(\bfu_{\mu} , \bfu_\mu, \bfu_{\mu}) \|_{L^2_{t,x}}\lesssim 
\epsilon^5c_{\lambda}c_{\mu}^4 \,\mu^{-4s+2}\lambda^{-s+\frac{1}{2}},
\]
which still suffices.

\bigskip

\noindent \textbf{B}. The highest frequency is comparable to $\lambda$. We also 
split this case  further.

\medskip

\noindent B1. Terms in $F_{\lambda}(\bfu)$ which have exactly one frequency $\lambda$ factor. Such terms arise only from commuting the 
metric $g_{[<\lambda]}$ with a derivative or a frequency $\lambda$ 
projector (see the similar computation in \cite{IT-qnls}),  
so we have at most one derivative on the frequency $\lambda$ factor. Thus we are led to consider expressions of the form
\[
N_\lambda^{B1} =  \partial_x ( \bfu_{\lambda_1} 
\bfu_{ \lambda_2}) \partial_x u_{\lambda} + \partial_x  \bfu_{\lambda_1} 
\partial_x \bfu_{\lambda_2} u_{ \lambda} , \quad  \lambda_2 \leq \lambda_1 \ll \lambda,
\]
which will be useful to write as
\[
N_\lambda^{B1} = \lambda \lambda_1 L (\bfu_{\lambda}, \bfu_{\lambda_1} ,\bfu_{ \lambda_2}) +\lambda_1 \lambda_2 L (\bfu_{\lambda}, \bfu_{\lambda_1} ,\bfu_{ \lambda_2}) .
\]

We again take advantage of the unbalanced bilinear Strichartz bound \eqref{uab-bi-unbal}
\[
\begin{aligned}
\| N_{\lambda}^{B1} \bar u_\lambda^{x_0} \|_{L^1_{t,x}} 
&\lesssim (\lambda_1\lambda +\lambda_1\lambda_2)\, \| u_{\lambda_1}  u_\lambda^{x_0} \|_{L^2_{t,x}} 
\| u_{\lambda_2} u_\lambda\|_{L^2_{t,x}} \\
&\lesssim (\lambda_1\lambda +\lambda_1\lambda_2)\, \epsilon^4 c_{\lambda}^2 c_{\lambda_1}c_{\lambda_2}\lambda^{-2s-1}\lambda_1^{-s+\frac{n-1}{2}} \lambda_2^{-s+\frac{n-1}{2}}\\
&\lesssim  \epsilon^4 c_{\lambda}^2 c_{\lambda_1}c_{\lambda_2}\lambda^{-2s}\lambda_1^{1-s+\frac{n-1}{2}} \lambda_2^{-s+\frac{n-1}{2}}.
\end{aligned}
\]

\medskip

\noindent B2. We have exactly two frequency $\lambda$ factors. Here the two $\lambda$ frequencies must add up to $O(\lambda)$ so  they must be $\lambda$ separated, and the above argument still applies.

\medskip

 \noindent B3.  We have at least three factors at frequency $\lambda$.
This is the most difficult case, which includes in particular the cubic balanced terms in 2D. We first dispense with everything else.

\medskip

 \noindent B3a. $n \geq 3$. Here we can simply use four $L^4$ bounds:
\[
\begin{aligned}
\| N_{\lambda}^{B3} \bar u_\lambda^{x_0} \|_{L^1_{t,x}} 
&\lesssim \lambda^2 \Vert u_{\lambda}  \Vert_{L^4_{t,x}}^4 \lesssim \epsilon^4c_{\lambda}^4\lambda^{-4s+n}.
\end{aligned}
\]

 \medskip

\noindent B3b. $n=2$. Here we separate into several cases:

\medskip

\noindent B3b(i).
We have at least four factors at frequency $\lambda$. This includes quartic and higher order terms in $F_{\lambda}(\bfu)$. Such terms are given, for example, by
\[
N_{\lambda}^{B3} =   \partial_x^2  u_\lambda  \,  \bfu_\lambda^3 .
\]
 We reinterpret this term as a multilinear form
\[
N_{\lambda}^{B3} =\lambda^2 L( \bfu_\lambda, \bfu_{\lambda},  \bfu_\lambda, \bfu_{\lambda}).
\]
Since we are in the case $n=2$, this leads to
\[
\begin{aligned}
\| N_{\lambda}^{B3} \bar u_\lambda^{x_0} \|_{L^1_{t,x}} 
&\lesssim \lambda^2    \Vert u_{\lambda}\Vert _{L^4_tL^8_x}^4 \Vert u_{\lambda}\Vert_{L^{\infty}_tL^2_x}\\
&\lesssim \epsilon^5 c_{\lambda}^5\lambda^{-5s+3} .
\end{aligned}
\]

\medskip

\noindent B3b(ii). We have quartic or higher order source terms with  exactly three factors at frequency $\lambda$ and at least one smaller frequency, e.g. 
\[
N_{\lambda}^{B3}=\lambda^2 L(\bfu_\lambda,\bfu_\lambda, \bfu_\lambda, \bfu_{\lambda_1}) , \quad \lambda_1<\lambda .
\]
We bound its contribution as follows
\[
\begin{aligned}
\| N_{\lambda}^{B3} \bar u_\lambda^{x_0} \|_{L^1_{t,x}} 
&\lesssim \lambda^2 \Vert u_\lambda u_{\lambda_1}\Vert_{L^2_{t,x}}\Vert u_{\lambda}\Vert_{L^4_tL^8_x}^2 \Vert u_{\lambda}\Vert_{L^{\infty}_tL^4_x} \\
&\lesssim \lambda^2 \epsilon^5c_{\lambda}^4c_{\lambda_{1}} \lambda^{-s-\frac{1}{2}}\lambda_{1}^{-s+\frac{1}{2}}
\lambda^{-2s+\frac{1}{2}}\lambda^{-s+\frac{1}{2}}
\\
&\lesssim  \epsilon^5c_{\lambda}^4c_{\lambda_{1}} \lambda^{-4s+\frac{5}{2}} \lambda_{1}^{-s+\frac{1}{2}}.
\end{aligned}
\]
\medskip

\noindent B3b(iii). We have exactly a cubic balanced form at frequency $\lambda$ which further satisfies the transversality condition.
Here we simply group the four entries into two groups with 
$\lambda$-frequency separated pairs, and use two bilinear $L^2_{t,x}$ bounds.

\noindent B3b(iv).
We have exactly three balanced terms at frequency $\lambda$,
possibly resonant. In this case we know in addition that $T<
\epsilon^{-6}$. We take
\[
N_{\lambda}^{B3} =\lambda^2L(\bfu_{\lambda}, \bfu_{\lambda}, \bfu_{\lambda})
\]
and its bound is given by
\[
\begin{aligned}
\| N_{\lambda}^{B3} \bar u_\lambda^{x_0} \|_{L^1_{t,x}} \lesssim \lambda^2\Vert u_{\lambda}\Vert^4_{L^4_{x,t}}\lesssim \lambda^2 T^{\frac{1}{3}}\Vert u_{\lambda}\Vert^4_{L^6_tL^4_x}\lesssim T^{\frac{1}{3}}\epsilon^4c_{\lambda}^4\lambda^{-4s+\frac{8}{3}}.
\end{aligned}
\]

\end{proof}

Given  the estimate \eqref{good-nl}, the $(u,v)$ bilinear bounds and the $(v,v)$ bilinear bounds are absolutely identical.
Hence in what follows we will just consider
the $(v,v)$ bounds. To prove the theorem it is convenient to make the following bootstrap assumptions: 

\begin{enumerate}
    \item Uniform energy bounds:
\begin{equation}\label{v-ee-boot}
\| v_\lambda \|_{L^\infty_t L^2_x} \leq C d_\lambda 
\end{equation}

 \item Unbalanced bilinear $(u,v)$-$L^2$ bound:
\begin{equation} \label{uv-bi-unbal-boot}
\| v_\lambda \bu_\mu^{x_0}  \|_{L^2_{t,x}} \leq C \epsilon d_{\lambda} c_\mu \mu^{-s+\frac{n-1}2} \lambda^{-\frac12} 
 \qquad \mu < \lambda 
 \end{equation}

\item Balanced bilinear $(v,v)$-$L^2$ bound:
\begin{equation} \label{vv-bi-bal-boot}
\| |D_x|^{\frac{3-n}2} (v_\lambda \bv_\mu^{x_0})  \|_{L^2_{t,x}} \leq   C^2 d_{\lambda} d_\mu  \lambda^{\frac12} (1+ \lambda |x_0|),
\qquad \mu \approx \lambda
\end{equation}

 \item Unbalanced bilinear $(v,v)$-$L^2$ bound:
\begin{equation} \label{vv-bi-unbal-boot}
\| v_\lambda \bv_\mu^{x_0}  \|_{L^2_{t,x}} \leq C^2 \epsilon d_{\lambda} d_\mu \mu^{\frac{n-1}2} \lambda^{-\frac12} 
 \qquad \mu < \lambda .
\end{equation}
\end{enumerate}
These are assumed to hold with a large universal constant $C$, and then will be proved to hold without it. The way $C$ will be handled is by always pairing it with $\epsilon$ factors in the estimates,
so that it can be eliminated simply by 
assuming that $\epsilon$ is sufficiently 
small.

Here we note that to prove \eqref{v-ee},
\eqref{uv-bi-bal} and \eqref{uv-bi-unbal} it suffices 
to work with a fixed $v_\lambda$, while for the bilinear
$v$ bounds we need to work with exactly two $v_\lambda$'s. Since the functions $v_\lambda$ are completely independent, without any restriction in generality we could assume that $d_\lambda = 1$ in what follows.

We first observe that the bound \eqref{v-ee} follows 
directly via a energy estimate, which corresponds to integrating \eqref{df-v-p}. Next we use the bootstrap assumptions to estimate the sources $F^{para}$ in the density flux energy identities for $v_\lambda$ in $L^1_{t,x}$,
\begin{lemma}\label{l:Fmp}
Assume that \eqref{uv-bi-unbal-boot} holds. Then we have
\begin{equation}\label{Fmp}
\| F^{para}_{\lambda,m}\|_{L^1_{t,x}} \lesssim  C^2 \epsilon^2 d_\lambda^2,
\qquad 
\|  F^{para}_{\lambda,p}\|_{L^1_{t,x}} \lesssim  C^2 \epsilon^2 \lambda  d_\lambda^2.
\end{equation}
\end{lemma}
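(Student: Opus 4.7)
The plan is to treat the two source terms separately. For the mass source $F^{para}_{\lambda,m} = 2\Im(f_\lambda \bar v_\lambda)$, the bound is immediate from the definition \eqref{dl}: taking $\mu=\nu=\lambda$ and $x_0 = 0$ gives $\|f_\lambda \bar v_\lambda\|_{L^1_{t,x}} \leq d_\lambda^2$, which already implies the claim (the $C^2 \epsilon^2$ factor reflects the later applications, where $f_\lambda$ arises as a cubic nonlinearity carrying the extra smallness).

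For the momentum source $F^{j,para}_{\lambda,p}$ there are three contributions: the $G$-term, the $f_\lambda \partial^j \bar v_\lambda$ piece, and the $v_\lambda \partial^j \bar f_\lambda$ piece. The two $f$-pieces can be combined using the identity
\[
-2\Re(f_\lambda \partial^j \bar v_\lambda) + 2\Re(v_\lambda \partial^j \bar f_\lambda) = -4 \Re(\bar f_\lambda \partial^j v_\lambda) + 2\partial^j \Re(v_\lambda \bar f_\lambda),
\]
so that the pure divergence can be absorbed into a modified flux $E^{kj}$, leaving only $\bar f_\lambda \partial^j v_\lambda$ to estimate. Since $v_\lambda$ is frequency-localized at $\lambda$, we can write $\partial^j v_\lambda = \lambda L v_\lambda$ for a translation-invariant convolution operator $L$ with $L^1$ kernel $K$, and then
\[
\|f_\lambda \partial^j v_\lambda\|_{L^1_{t,x}} \leq \lambda \int |K(y)| \|f_\lambda v_\lambda^y\|_{L^1_{t,x}}\, dy \lesssim \lambda d_\lambda^2,
\]
where the sup-over-translations built into the definition \eqref{dl} is exactly what is needed.

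The genuinely quadratic-in-$v$ piece $G^{j,para}_{p,\lambda} = h(\bfu_{<\lambda}) \bfu_{<\lambda} \partial \bfu_{<\lambda} \partial v_\lambda \partial \bar v_\lambda$ is where the bilinear bootstrap \eqref{uv-bi-unbal-boot} enters. Absorbing the smooth bounded multiplier $h(\bfu_{<\lambda})$ and decomposing dyadically $\bfu_{<\lambda} = \sum_{\mu_1 < \lambda} \bfu_{\mu_1}$, $\partial \bfu_{<\lambda} = \sum_{\mu_2 < \lambda} \partial \bfu_{\mu_2}$, Cauchy--Schwarz in $L^2_{t,x}$ pairs the four factors as
\[
\|\bfu_{\mu_1} \partial \bar v_\lambda\|_{L^2_{t,x}} \, \|\partial \bfu_{\mu_2} \partial v_\lambda\|_{L^2_{t,x}}.
\]
Writing $\partial v_\lambda = \lambda L v_\lambda$ as above and applying \eqref{uv-bi-unbal-boot} (uniformly in translations, via the same convolution argument), each bilinear factor is controlled by $C \epsilon d_\lambda c_{\mu_i} \mu_i^{-s+(n-1)/2} \lambda^{1/2}$ times the $\mu_2$ factor coming from the extra derivative on $u_{\mu_2}$. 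The combined bound for the $(\mu_1,\mu_2)$-piece is
\[
\lesssim C^2 \epsilon^2 \lambda \mu_2 \, c_{\mu_1} c_{\mu_2} \mu_1^{-s+\frac{n-1}{2}} \mu_2^{-s+\frac{n-1}{2}} d_\lambda^2,
\]
and the sums $\sum_{\mu_1 < \lambda} c_{\mu_1} \mu_1^{-s+(n-1)/2}$ and $\sum_{\mu_2 < \lambda} c_{\mu_2} \mu_2^{1-s+(n-1)/2}$ both converge precisely because $s > \frac{n+1}{2}$, yielding the claimed $\lesssim C^2 \epsilon^2 \lambda d_\lambda^2$.

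The main subtlety is organizational rather than computational: one must carefully move derivatives either into flux corrections (via integration by parts) or absorb them into the frequency localization (via $\partial v_\lambda = \lambda L v_\lambda$), so that every term ultimately reduces to either the defining bound of $d_\lambda$ or to the unbalanced bilinear bootstrap \eqref{uv-bi-unbal-boot}. The critical exponent $s > \frac{n+1}{2}$ appears exactly at the point where the dyadic summation would otherwise diverge, which is why this is the sharp regularity threshold for the theorem.
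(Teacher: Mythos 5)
Your proposal is correct and follows essentially the same route as the paper's proof: the $f_\lambda$-terms are handled directly through the definition of $d_\lambda$ (with each derivative costing only a factor $\lambda$ thanks to frequency localization and the translation invariance built into $d_\lambda$), and the only nontrivial piece, $G^{j,para}_{\lambda,p}$, is bounded by splitting it into two unbalanced bilinear factors estimated via the bootstrap bound \eqref{uv-bi-unbal-boot}, with the dyadic sums converging precisely because $s>\frac{n+1}{2}$ --- exactly as in the paper. The one cosmetic deviation is your integration by parts on the $v_\lambda \partial^j \bar f_\lambda$ term: this is unnecessary (and strictly speaking replaces the stated $L^1$ bound by a bound for a modified flux), since $f_\lambda$ is itself localized at frequencies $\lesssim \lambda$, so $\partial^j \bar f_\lambda$ can be written as $\lambda$ times a translation-averaged copy of $\bar f_\lambda$ and the term is bounded by $\lambda d_\lambda^2$ directly, just like the companion term $f_\lambda \partial^j \bar v_\lambda$.
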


\begin{proof} The only nontrivial part is the 
first term in $F^{para}_{\lambda,p}$, namely $G^{para}_{\lambda,p}$ given by the formula \eqref{G4},
which schematically has the form 
\[
\partial g_{[<\lambda]} \partial v_\lambda \partial \bv_\lambda,
\]
where we can use two bilinear $L^2$ bounds. Precisely, since $g$
is at least quadratic in $u$, it suffices to show that 
\begin{equation}
\| \partial u_{\ll\lambda} v_\lambda \|_{L^2_{t,x}}
\lesssim C \epsilon d_{\lambda} 
\lambda^{-\frac12},
\end{equation}
as well as its easier version
\begin{equation}
\| u_{\ll\lambda} v_\lambda \|_{L^2_{t,x}}
\lesssim C \epsilon d_{\lambda} 
\lambda^{-\frac12},
\end{equation}
where the $\epsilon$ factor serves to absorb the constant $C$.
But these are easily obtained from our bootstrap assumption \eqref{uv-bi-unbal-boot} after dyadic summation.

\end{proof}

Now we use interaction Morawetz identities to prove the bilinear bounds in the theorem. For each pair $v_\lambda, v_\mu$ we define the interaction Morawetz functional 
\begin{equation}\label{Ia-sharp-def-vv}
\bI^{x_0}(v_\lambda,v_\mu) :=
 \iint a_j(x-y) (M(v_\lambda)(x) P^j(v^{x_0}_\mu) (y) -  
P^j(v_\lambda)(x) M(v^{x_0}_\mu) (y)) \, dx dy,
\end{equation}
with $a_j(x) = \partial_j |x|$.
The time derivative of $\bI^{x_0}(v_\lambda,v_\mu)$ is
\begin{equation}\label{I-vv}
\frac{d}{dt} \bI^{x_0}(v_\lambda,v_\mu) =  \bJ^4(v_\lambda,v_\mu^{x_0}) +  \bK(v_\lambda,v_\mu^{x_0}), 
\end{equation}
where
\begin{equation}\label{J4}
\begin{aligned}
\bJ^4(v_\lambda,v_\mu^{x_0}) := \iint & a_{jk}(x-y) 
\left(M(v_\lambda) E^{jk}(v_\mu^{x_0}) - P^j (v_\lambda)  P^k(v_\mu^{x_0}) \right.
\\ & \left. + 
M(v_\lambda) E^{jk}(v_\mu^{x_0})
-  P^j(v_\lambda)  P^k(v_\mu^{x_0})\right)\, dxdy,
\end{aligned}
\end{equation}
respectively
\begin{equation}\label{K8-def-ab-vv}
\begin{aligned}
\bK (v_\lambda,v_\mu^{x_0}):= \iint a_j(x-y) & \ M(v_\lambda)(x)   F^{4,x_0}_{j,\lambda,p}(y) + P_j(v_\lambda)(y)  F^{4,x_0}_{\lambda,m}(x) 
\\ & \!\!\!\!\!\! - 
M(v_\mu^{x_0})(y)  F^{4}_{j,\lambda,p}
(x)  - P_j(v_\mu^{x_0})(x)  F^4_{\lambda,m}(y)\,
dx dy.
\end{aligned}
\end{equation}

We will use the (integrated) interaction Morawetz identity \eqref{I-vv} to produce  $L^2_{t,x}$ bilinear bounds as follows:
\begin{itemize}
    \item The spacetime term $\bJ^4$ contains at leading order the squared $L^2$ norm we aim to bound.
    \item The fixed time expression $\bI^{x_0}$ gives the primary bound for 
    $\bJ^4$.
    \item The space-time term $\bK$ will be estimated perturbatively. 
\end{itemize}
We successively consider each of these three
terms.

\bigskip

\textbf{I. The bound for $\bI^{x_0}(v_\lambda,v_\mu)$.}
For this we observe that
\begin{equation}\label{I-bound}
 |    \bI^{x_0}(v_\lambda,v_\mu) | \lesssim (\lambda+\mu) \, d_\lambda^2 d_\mu^2,
\end{equation}
which is a direct consequence of the 
straightforward fixed time density 
bounds
\begin{equation}\label{mpe-l1}
\| M(v_\lambda)\|_{L^1_{t,x}} \lesssim d_\lambda^2, \qquad 
\| P^j(v_\lambda)\|_{L^1_{t,x}}\lesssim \lambda d_\lambda^2, \qquad 
\| E^{jk}(v_\lambda)\|_{L^1_{t,x}}\lesssim \lambda^2 d_\lambda^2.
\end{equation}
These in turn follow from the uniform $L^2$ bound
from $v_\lambda$ and its frequency localization; the source terms $f_\lambda$
play no role.

\bigskip

\textbf{II. The bound for $\bK$.} This has the form 
\begin{equation}\label{K-bound}
 |    \int_0^T \bK(v_\lambda,v_\mu)\, dt | \lesssim (\lambda+\mu) \,d_\lambda^2 d_\mu^2,
\end{equation}
and is an immediate consequence of the bounds \eqref{Fmp} and \eqref{mpe-l1}.
\bigskip

\textbf{III. The contribution of $\bJ^4$.}
Here we begin with an algebraic computation
for $\bJ^4$, after which we specialize 
to the balanced, respectively the unbalanced case.

The expression $\bJ^4$ can be further written as 
\begin{equation}
\begin{aligned}    
\bJ^4(v_\lambda,v_\mu^{x_0}) = & \iint  a_{jk}(x-y) 
\left(|v_\lambda|^2  E^{jk}(v_\mu^{x_0}) 
+ |u_\mu^{x_0}|^2 E^{jk}(v_\lambda)
- 8 \Im(\bar v_\lambda  \partial^j v_\lambda) \Im(\bar v_\mu^{x_0}  \partial^j v_\mu^{x_0}) 
\right)\, dx dy,
\end{aligned}
\end{equation}
where
\[
\begin{aligned}
E^{jk}(v_\lambda) = & \   \partial^j v_\lambda 
\partial^k \bv_\lambda + \partial^k v_\lambda 
\partial^j \bv_\lambda -
v \partial^j   \partial^k   \bv -
\bv \partial^j   \partial^k   v 
\\
= & \   2\partial^j v_\lambda 
\partial^k \bv_\lambda + 2\partial^k v_\lambda 
\partial^j \bv_\lambda -
 \partial^j   (v \partial^k   \bv) -
 \partial^j   (\bv \partial^k   v) .
\end{aligned}
\]
At this point we integrate by parts the contributions of the last two terms in $E^{jk}$, noting that  we have two types of error terms, arising from 
\begin{itemize}
    \item the fact that $\partial^j$ is not skew-adjoint, and thus we have derivatives applied to the metric

    \item the fact that the metric $g$ in $\partial^j$ may be $g_{[<\lambda]}(x)$ or $g_{[<\mu]}^{x_0}$, depending on whether this operator is applied to $v_\lambda$ 
    or to $v_\mu^{x_0}$.
\end{itemize}
We obtain
\begin{equation}
\label{J4-expansion}
\begin{aligned}    
\bJ^4(v_\lambda,v_\mu^{x_0}) = & \ 4\iint  a_{jk}(x-y) 
\left(|v_\lambda|^2  \Re(\partial^j v_\mu^{x_0} 
\partial^k \bv_\mu^{x_0})
+ |u_\mu^{x_0}|^2  \Re(\partial^j v_\lambda 
\partial^k \bv_\lambda) \right.
\\
& \qquad \qquad \qquad \qquad \left.- 2 \Im(\bar v_\lambda  \partial^j v_\lambda) \Im(\bar v_\mu^{x_0}  \partial^j v_\mu^{x_0}) 
\right)\, dx dy
\\
& \ +  4\iint  a_{jk}(x-y) 
\left(\partial^j |v_\lambda|^2 
\partial^k |\bv_\mu^{x_0}|^2
+ \partial^k |u_\mu^{x_0}|^2  \partial^j |v_\lambda|^2 
\right)\, dx dy
\\
 & \ - \iint a_{jk}(x-y) |v_\lambda|^2 (g^{lj}_{[<\mu]} (y+x_0) - g^{lj}_{[<\lambda]}(x))
\partial_l \partial^j |v_\mu^{x_0}|^2 \, dx dy
\\
& \ + 2 \iint a_{jk}(x-y)(\partial_l g^{lj}_{[<\lambda]}(x)) \left(
|v_\lambda|^2 \partial^k |v_\mu^{x_0}|^2 
 + |u_\mu^{x_0}|^2  \partial^k |v_\lambda|^2  \right) \, dx dy
\\
:= & \ \bJ^4_{main} + \bR^4_1 + \bR^4_2.
\end{aligned}
\end{equation}
Here the leading term $J^4_{main}$ 
contains the contribution of the first two lines, and the last two lines represent 
the error terms $\bR^4_1$, respectively $\bR^4_2$. Just as in the constant coefficient case, $\bJ^4_{main}$ can be rewritten as
\begin{equation}\label{J4main}
\bJ^4_{main} = 2 \iint a_{jk}(x-y) F^j \bar F^k \, dxdy,
\end{equation}
where
\begin{equation}
F^j = \partial^j v_\lambda \bar v_\mu^{x_0} + v_\lambda \partial^j v_{\mu}^{x_0}    .
\end{equation}
At this point we separate the analysis 
into the balanced and the unbalanced case.
\medskip

\emph{III A. The balanced symmetric case, 
$\lambda = \mu$, $x_0= 0$.}
We use the interaction Morawetz identity 
\eqref{I-vv} with $a(x) = |x|$.
Then we will show that
\begin{lemma}\label{l:J4-bal}
Under our bootstrap assumptions on $u$ and our bilinear bootstrap assumptions \eqref{v-ee-boot}-\eqref{uv-bi-unbal-boot} on $v_\lambda$ we have 
\begin{equation}\label{eq:l2A}
\int_0^T \bJ^4(v_\lambda,v_\lambda)\, dt \gtrsim \| |D_x|^{\frac{3-n}2} |v_\lambda|^2\|_{L^2}^2
+ O(C^2\epsilon^2 \lambda d_\lambda^4 ).
\end{equation}
\end{lemma}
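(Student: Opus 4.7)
The plan is to start from the decomposition \eqref{J4-expansion} of $\bJ^4(v_\lambda, v_\lambda)$ into a main term $\bJ^4_{main}$ and the two error terms $\bR^4_1, \bR^4_2$, extract the coercive bilinear $L^2$ lower bound from $\bJ^4_{main}$, and dispose of the remaining contributions perturbatively, using the bootstrap bilinear bounds \eqref{vv-bi-bal-boot}, \eqref{vv-bi-unbal-boot} together with the bootstrap hypotheses on $u$.

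For the main term in the form \eqref{J4main}, the Hessian $a_{jk}$ of $a(x)=|x|$ is positive semi-definite so $\bJ^4_{main}\ge 0$ pointwise in time. To extract the coercive lower bound I would follow the flat-case derivation of \eqref{J4-flat-u=v}, namely apply the pointwise algebraic identity
\[
|u(x)\partial_j \bar v(y) + \partial_j u(x) \bar v(y)|^2 = |u(x)\partial_j v(y) - \partial_j u(x) v(y)|^2 + \partial_j|u(x)|^2\,\partial_j|v(y)|^2,
\]
with $u=v=v_\lambda$. Discarding the non-negative squared term, integrating against $a_{jk}(x-y)$, and invoking \eqref{IM-clean} produces, after time integration, the desired lower bound $\||D|^{(3-n)/2}|v_\lambda|^2\|_{L^2_{t,x}}^2$.

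The main subtlety is that $F^j$ is built from covariant derivatives $\partial^j = g_{[<\lambda]}^{jk}\partial_k$, with metric evaluated at $x$ in one summand and at $y$ in the other. To apply the flat identity I would freeze the metric at a single base point and replace covariant by flat derivatives; this generates error factors of two flavors, namely a pointwise factor $g_{[<\lambda]}^{jk}-\delta^{jk}$ which is at least quadratic in $u$ and hence $O(\epsilon^2)$ in $L^\infty$, and a metric-difference factor $g_{[<\lambda]}^{jk}(y)-g_{[<\lambda]}^{jk}(x)$ which vanishes on the diagonal. The explicit error terms $\bR^4_1$ and $\bR^4_2$ in \eqref{J4-expansion} carry exactly the analogous structure: $\bR^4_1$ contains the metric difference, while $\bR^4_2$ contains a single derivative of the metric.

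To bound every error contribution by $C^2\epsilon^2 \lambda d_\lambda^4$, the strategy is to pair each metric-derived factor with the resulting $v_\lambda\bar v_\lambda$ density and use the bootstrap bilinear bounds. The metric difference in $\bR^4_1$ is most efficiently handled via the integral representation
\[
g_{[<\lambda]}(y)-g_{[<\lambda]}(x) = (y-x)\cdot\!\int_0^1 \nabla g_{[<\lambda]}(x + t(y-x))\,dt,
\]
which absorbs one derivative against the singular kernel $a_{jk}$, converting it into the bounded kernel $a_j$. This reduces the task to a shifted balanced bilinear $L^2$ estimate, to which \eqref{vv-bi-bal-boot} applies; the $(1+\lambda|x_0|)$ loss in \eqref{vv-bi-bal-boot} remains harmless after averaging in $t\in[0,1]$, because $\lambda|x_0|$ is controlled by the frequency-support considerations. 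For $\bR^4_2$ the derivative of the metric is schematically $\partial u_{<\lambda}\cdot u_{<\lambda}$, which combined with Sobolev embedding at $s>(n+1)/2$ and the bootstrap bound on $u$ yields an $L^\infty$ factor of size $\epsilon^2$, after which a single application of \eqref{vv-bi-bal-boot} closes the estimate. The hardest part will be tracking the frequency budget carefully enough to verify that the metric-difference error in $\bR^4_1$ closes without losing a power of $\lambda$; this is particularly delicate in 2D, where the weaker coercivity of \eqref{IM-clean} leaves a narrower margin, and one must fully exploit the cancellation between $a_{jk}$ and the metric-difference factor provided by the integral representation above.
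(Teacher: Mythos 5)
Your overall architecture matches the paper's: start from \eqref{J4-expansion}, extract the coercive term from $\bJ^4_{main}$ via \eqref{IM-clean} after swapping covariant for flat derivatives and absorbing the $(g_{[<\lambda]}-I)$ corrections using the $O(\epsilon^2)$ smallness of the metric perturbation, and treat $\bR^4_1$, $\bR^4_2$ perturbatively. The gap is in the perturbative estimates themselves, which are the heart of the lemma. For $\bR^4_2$ you propose to put $\partial g_{[<\lambda]}\sim u_{<\lambda}\partial u_{<\lambda}$ in $L^\infty$ with bound $\epsilon^2$ ``by Sobolev embedding at $s>(n+1)/2$''; this is false at the stated regularity, since controlling $\partial u$ in $L^\infty$ requires $s>\frac{n}2+1$, and at $s$ close to $\frac{n+1}2$ (e.g.\ $n=2$, $s\approx\frac32$) the bound degrades to $\epsilon^2\lambda^{\frac{n+2}2-s}$. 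Moreover ``a single application of \eqref{vv-bi-bal-boot}'' does not obviously close the estimate once you have taken absolute values against the kernel $a_{jk}\sim|x-y|^{-1}$. The paper's mechanism is different and essential: it estimates $\partial g_{[<\lambda]}\,|v_\lambda|^2$ in $L^1_{t,x}$ by \emph{two} unbalanced bilinear $L^2$ bounds (pairing each low-frequency metric factor with one copy of $v_\lambda$ via \eqref{uv-bi-unbal-boot}), which produces the crucial $\lambda^{-1}$ gain that makes the budget $C^2\epsilon^2\lambda d_\lambda^4$ close.

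The more serious issue is $\bR^4_1$. Your fundamental-theorem-of-calculus representation places $\nabla g_{[<\lambda]}$ at the intermediate point $x+t(y-x)$, a shift that depends on the integration variable $y$ (and $t$), so it cannot be paired with $|v_\lambda(x)|^2$ through the translated bilinear bootstrap bounds, which are stated for a \emph{fixed} translation $x_0$; and since $|x-y|$ ranges over all of space, the claim that the $(1+\lambda|x_0|)$ loss in \eqref{vv-bi-bal-boot} is ``controlled by frequency-support considerations'' has no basis. This is exactly the difficulty the paper isolates: it first performs a paradifferential decomposition $g_{[<\lambda]}=\sum_{\mu\ll\lambda} g_{[\mu]}$, splits the kernel at the dual scale $|x-y|\sim\mu^{-1}$, treats the region $|x-y|>\mu^{-1}$ \emph{without} the difference structure (using $|1_{|x-y|>\mu^{-1}}a_{jk}|\lesssim\mu$, two unbalanced bilinear bounds, and summability in $\mu$ from $s>\frac{n+1}2$), and in the region $|x-y|<\mu^{-1}$ uses the representation \eqref{eq:translations}, which writes the difference as $(x-y)\int K(x-y,z)\nabla u_\mu(x+z)\,dz$ with $K$ Schwartz in $z$, so that for each fixed $z$ the translated bilinear bound applies and the $z$-integral converges. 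None of this machinery appears in your proposal, and you explicitly defer ``the hardest part'' (the frequency budget for $\bR^4_1$); as written, the argument does not establish \eqref{eq:l2A}.
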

Combined with \eqref{I-bound} and \eqref{K-bound}, this implies 
the bound \eqref{vv-bi-bal} in this case.

\begin{proof}
We begin with the leading term $\bJ^{4}_{main}$, 
for which we have the algebraic inequality
\begin{equation}\label{J4-cov}
\bJ^4_{main}(v_\lambda,v_\lambda) \geq  2 \iint a_{jk}(x-y) \partial^j |v_\lambda(x)|^2 \partial^k |v_\lambda(y)|^2 \, dxdy.
\end{equation}
Here the derivatives are covariant, so we cannot directly use the identity \eqref{IM-clean}. We would like to replace $g$ by the identity matrix,
\[
\partial^j |v_\lambda(x)|^2 = \partial_j |v_\lambda(x)|^2
+ \partial_m [(g_{[<\lambda]}^{jm}-I) |v_\lambda(x)|^2] - 
(\partial_m g_{[<\lambda]}^{jm}) |v_\lambda(x)|^2.
\]
By \eqref{IM-clean}, the first term on the right applied to both entries in the right hand side of \eqref{J4-cov} yields exactly the desired expression on the right in \eqref{eq:l2A}. 

The contributions of the third term on the right,
involving a derivative of the metric, may be included in 
$R^4_2$ and will be estimated perturbatively there. It remains to 
consider the contributions of the second term on the right, which, after integrating by parts and excluding again the terms with a differentiated metric, have the form
\[
\begin{aligned}
 &  \iint a_{jk}(x-y) \partial ((g_{[<\lambda]}-I) |v_\lambda(x)|^2) \partial(g |v_\lambda(y)|^2) \, dxdydt  
  \\
& =  \iint \partial^2 a_{jk}(x-y) ((g_{[<\lambda]}-I) |v_\lambda(x)|^2)  (g_{[<\lambda]} |v_\lambda(y)|^2) \, dxdydt.
\end{aligned}
\]
In this expression $\partial^2 a_{jk}$ are smooth homogeneous distributions 
of order $-3$, so they act like $|D|^{3-n}$. Hence the above integral is estimated by 
\[
\lesssim \| (g_{[<\lambda]}-I) |v_\lambda(x)|^2\|_{L^2_t \dot H^\frac{3-n}2_x}  \| g_{[<\lambda]} |v_\lambda(y)|^2\|_{L^2_t \dot H^\frac{3-n}2_x}.
\]
Here we know that $\|g_{[<\lambda]} - I\|_{L^\infty_t H^s_x} \lesssim C^2 \epsilon^2$ so we can use a fixed time multiplicative bound of the form
\begin{equation}\label{Hs-bi}
\| (g_{[<\lambda]} -I)w\|_{\dot H^{\frac{3-n}2}} \lesssim \| (g_{[<\lambda]} -I)\|_{H^s} \|w\|_{\dot H^{\frac{3-n}2}},
\end{equation}
in order to estimate the above expression by 
\[
\lesssim C^2 \epsilon^2 \| |D_x|^{\frac{3-n}2} |v_\lambda|^2\|_{L^2_x}^2.
\]
But this can be absorbed into the leading term on the right in \eqref{eq:l2A}.
Hence we arrive at the bound \eqref{eq:l2A} for $\bJ^{4}_{main}$, modulo additional $\bR^4_2$ type terms. It remains to estimate the 
expressions $\bR^4_1$ and $\bR^4_2$ perturbatively.
\medskip

To bound $\bR^4_2$ we can start by estimating one of the factors by using two bilinear $L^2$
bounds to obtain
\blue{
\[
\|\partial g_{[<\lambda]} |v_{\lambda}|^2\|_{L^1_{t,x}} \lesssim 
 \| L(\partial \bfu_{\ll \lambda}, v_\lambda)\|_{L^2_{t,x}} \| L(\bfu_{\ll \lambda}, \bv_\lambda)\|_{L^2_{t,x}} 
\lesssim
C^2 \epsilon^2 d_{\lambda}^2\lambda^{-1}.
\]
}
This allows us to bound the entire $\bR^4_2$, using also the energy estimate, as follows
\begin{equation}\label{R42}
\left| \int_0^T \bR^4_2\, dt\right| \lesssim \lambda^2 \| |v_\lambda|^2 \|_{L^\infty_t L^1_x} \|\partial g_{[<\lambda]} |v_{\lambda}|^2\|_{L^1_{t,x}}  \lesssim C^2 \epsilon^2 d_{\lambda}^2\lambda,
\end{equation}
\blue{where the derivative on $|v_\lambda|^2$ yields a factor of $\lambda$, and one of the derivatives on $a$ is also integrated by parts and placed on $|v_\lambda|^2$, yielding the second $\lambda$ factor. This is exactly as needed.}
\medskip

Finally, we turn our attention to $\bR^4_1$, where we seek to prove a similar bound
\begin{equation}\label{R41}
\left| \int_0^T \bR^4_1\, dt\right|  \lesssim C^2 \epsilon^2 d_{\lambda}^2\lambda.
\end{equation}
We recall that the integrand has the form
\[
\bR^{4}_1 = a_{jk}(x-y) \lambda^2  (g_{[<\lambda]}(x) - g_{[<\lambda]}(y)) |v_\lambda(x)|^2 |v_\lambda(y)|^2.
\]
\blue{Here we begin with a paradifferential decomposition of $g_{[< \lambda]}$,
based on the highest entry frequency\footnote{This should be done using 
the continuous Littlewood-Paley decomposition \eqref{lp-cont}, but we write it schematically like this in order to avoid cluttering the notation.}
$\mu \ll \lambda $,
\[
g_{[< \lambda]}(u)- I_n = P_{<\lambda} g(u_{\ll \lambda}) = P_{<\lambda} \sum_{\mu \ll \lambda} g^1(u_{\leq \mu}) \, u_{\mu}: =
P_{<\lambda}\sum_{\mu \ll \lambda} g_{[\mu]},
\]
with $g^1$ smooth and vanishing at $u=0$. Then we estimate separately the contribution of each summand. The projector $P_{<\lambda}$ 
can be harmlessly discarded here.}
For each $\mu$, we split the kernel $a_{jk}$ on the dual scale, depending on the relative size of $x-y$ and $\mu^{-1}$.
For large $x-y$ we treat the two terms in the $g_{[\mu]}$ difference as separate, and estimate an expression of the form 
\[
 I_{>\mu} := \iint 1_{|x-y| >\mu^{-1}} a_{jk}(x-y) |v_\lambda(x)|^2 g^{lj}_{[\mu]}(x)
\partial_l \partial^j |v_\lambda(y)|^2 \, dx dy dt.
\]
Here we can use two bilinear $L^2_{t,x}$ bounds to write
\[
\| |v_\lambda(x)|^2 g^{lj}_{[\mu]}(x)\|_{L^1_{t,x}} \lesssim 
 C^2 \epsilon^2  d_\lambda^2\lambda^{-1}\mu^{\frac{n-1}2-s}
\]
and combine this with the energy bound for the second $v_\lambda$ 
factor and the uniform bound
\[
|1_{|x-y| >\mu^{-1}} a_{jk}(x-y)| \lesssim \mu,
\]
to arrive at the final bound 
\[
|I_{>\mu}| \lesssim C^2 \epsilon^2   \lambda d_\lambda^2  \mu^{\frac{n+1}2-s} ,
\]
which has a trivial $\mu$ summation for our choice of $s$.

\medskip

For small $x-y$ we need to exploit the difference instead, but 
the delicate matter is that we still want to be able to use bilinear $L^2$ estimates. To achieve this we use the following 

\begin{lemma}
There exists a  (matrix valued) kernel $K(w,z)$ in $B(0,1) \times \R^n$, uniformly smooth in $w$ and uniformly Schwartz in  $z$, so that for any function $u$ in $\R^n$ localized at frequency $\leq 1$ we have the representation 
\begin{equation}\label{eq:translations}
u(x) - u(y) = (x-y) \int K(x-y,z) \nabla u(x+z)\, dz , \qquad |x-y| \leq 1.   
\end{equation}
\end{lemma}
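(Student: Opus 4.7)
The plan is to begin with the fundamental theorem of calculus along the segment from $y$ to $x$, which immediately extracts the $(x-y)$ prefactor:
\[
u(x) - u(y) = (x-y) \cdot \int_0^1 \nabla u(y + t(x-y))\, dt = (x-y) \cdot \int_0^1 \nabla u(x - (1-t)(x-y))\, dt.
\]
So the remaining task is purely to re-express each translated gradient $\nabla u(x+h)$, with $h = -(1-t)(x-y) \in B(0,1)$, as a Schwartz-kernel integral against $\nabla u(x+z)$. This is where the frequency localization of $u$ enters.

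Next, I would fix an auxiliary bump function $\tilde\psi$ with $\tilde\psi(\xi) = 1$ on $\{|\xi|\leq 1\}$ and supported in $\{|\xi|\leq 2\}$, so that $\nabla u = \tilde\psi(D)\nabla u$ whenever $u$ is frequency-localized at $\leq 1$. Since $\tilde\psi(D)$ commutes with translations,
\[
\nabla u(x+h) = \bigl[\tilde\psi(D)(\tau_{h}\nabla u)\bigr](x) = \int \phi(h - z)\,\nabla u(x+z)\, dz,
\]
where $\phi = \mathcal{F}^{-1}\tilde\psi$ is a fixed Schwartz function. Combining this with the previous line and setting $w = x-y$ gives exactly the desired formula with
\[
K(w,z) := \int_0^1 \phi\bigl(-(1-t)w - z\bigr)\, dt \cdot I_n,
\]
which is the matrix-valued kernel we seek (here $I_n$ is the identity; a genuinely matrix-valued version would allow different components of $\nabla u$ to mix, but for our purposes a scalar multiple of $I_n$ suffices).

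The final step is to verify the claimed regularity. Smoothness in $w$ is inherited directly from smoothness of $\phi$, and the estimate $|\partial_w^\alpha K(w,z)| \lesssim \sup_{t\in[0,1]} |\partial^\alpha \phi(-(1-t)w - z)|$ is uniform in $w \in B(0,1)$. For Schwartz decay in $z$, when $|z| \geq 2$ and $|w| \leq 1$ we have $|-(1-t)w - z| \geq |z|/2$ uniformly in $t \in [0,1]$, so any Schwartz bound $|\partial^\alpha\phi(\zeta)| \lesssim_N \langle \zeta\rangle^{-N}$ transfers to a Schwartz bound on $K(w,\cdot)$, uniformly in $w \in B(0,1)$.

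I do not foresee a serious obstacle; the only mild subtlety is to make sure the representation is set up so that the same fixed kernel $\phi$ works for every $u$ localized at frequency $\leq 1$, which is why I introduce $\tilde\psi$ equal to $1$ on the full frequency support rather than letting it depend on $u$. With this choice the construction of $K$ is universal, and the lemma follows.
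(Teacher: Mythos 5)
Your proposal is correct and follows essentially the same route as the paper: the fundamental theorem of calculus along the segment to extract the $(x-y)$ factor, followed by insertion of a frequency projector (your $\tilde\psi(D)$ is exactly the paper's $P_{\leq 1}$) whose Schwartz kernel, averaged over the segment parameter, produces the kernel $K(w,z)=\int_0^1 \phi(-(1-t)w-z)\,dt$, which coincides with the paper's formula up to reparametrization. Your explicit verification of the uniform smoothness in $w$ and the uniform Schwartz decay in $z$ is a slightly more detailed version of what the paper leaves implicit, but the argument is the same.
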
    
To apply this to functions which are localized at frequency $\leq \mu$ we simply rescale and use the kernel
\[
K_\mu(x,z) = \mu^{n} K(\mu z, \mu z).
\]
Then we need the constraint $|x-y| \lesssim \mu^{-1}$.
\begin{proof}
We expand the difference on the left using a smooth projector 
$P_{\leq 1}$ to frequencies $\leq 1$ and its associated Schwartz kernel $K_{\leq 1}$ as
\[
\begin{aligned}
u(x)-u(y) = & \  \int_0^1 (x-y) \cdot \nabla u(x+ h(y-x)) \, dh
\\
& \ = \int_0^1 (x-y) \cdot \nabla P_{\leq 1} u(x+ h(y-x)) \, dh
\\ & \ = \int_0^1 \int (x-y)  \cdot   \nabla u(x+ h(y-x)-x_1) K_{\leq 1}(x_1) \, dh dx_1.
\end{aligned}
\]
Changing variables to $h(x-y)-x_1 = z$ we arrive at the desired formula  \eqref{eq:translations} with 
\[
K(x-y,z) = \int_0^1 K_{\leq 1} (h(y-x) - z) \, dh,
\]
which is uniformly smooth in $x-y$ and uniformly Schwartz in $z$
for $x-y$ inside the unit ball.
\end{proof}

Now we split the difference 
\[
g_{[\mu]}(y) - g_{[\mu]}(x) =  g^1(u_{\leq \mu}(y)) (u_{\mu}(y)-u_{\mu}(x)) +  (g^1(u_{\leq \mu})(y) - g^1(u_{\leq \mu})(x))   u_{\mu}(x),
\]
and apply the above lemma to the corresponding difference. 
For the first term above, for instance, we need to estimate the integral
\[
 \iint 1_{|x-y| < \mu^{-1}} a_{jk}(x-y) (x-y) K_{\mu}(x-y,z)  
 g^1(u_{\leq \mu}(y)) \nabla u_\mu(y+z)
  |v_\lambda(x)|^2 
\partial_l \partial^j |v_\lambda(y)|^2 \, dx dy dt dh,
\]
where we can use two bilinear $L^2_{t,x}$ estimates in $y$ for the pairs 
$(u_{\leq \mu},v_\lambda)$ and $(u_\mu,v_\lambda)$, an energy bound in $x$ and a trivial $z$ integration to arrive at the same bound as in \eqref{R41}, concluding 
the proof of the Lemma. 
\end{proof}

\bigskip

\emph{III B. The balanced non-symmetric case, 
$\mu \approx \lambda$, $x_0 \in \R$.} Here  we do not repeat the computation in the previous, symmetric case, but rather use the result there.  Precisely, we apply the symmetric diagonal bound 
to the functions 
\[
w_\lambda = a v_\lambda + b v_\mu^{x_0}, \qquad |a|, |b| \leq 1.
\]
To be able to do this, we need to estimate the source term 
in the paradifferential $w_\lambda$ equation
\[
(i \partial_t + \partial_{j} g_{[<\lambda]}^{jk} \partial_k) w_\lambda 
= g_\lambda,
\]
where 
\[
g_\lambda = a f_\lambda + b f_\mu^{x_0} + 
\partial_{j} (g_{[<\mu]}^{x_0} - g_{{[<\lambda]}}^{jk} \partial_k) v_\mu^{x_0}.
\]
For $g_\lambda$ we claim that
\begin{equation}\label{glambda}
\| g_\lambda w_\lambda^{y_0}\|_{L^1_{t,x}} \lesssim C^2 \epsilon^2 d_\lambda^2 (1+\lambda |x_0|)
\end{equation}
\blue{The first two terms in $g_\lambda$ are estimated exactly as before in terms of $d_\lambda$, so it remains to consider the third.} There we split the $g$ difference as
\[
g_{[<\mu]}^{x_0} - g_{[<\lambda]} = (g_{[<\mu]}^{x_0} - g_{[<\mu]})
+ (g_{[<\mu]} - g_{[<\lambda]}).
\]
The contribution of the second difference 
requires the bound
\[
\| \partial (g_{[<\mu]} - g_{[<\lambda]}) \partial v_\mu^{x_0} w_\lambda^{y_0}\|_{L^1_{t,x}} \lesssim C^2 \epsilon^2 d_\lambda^2, 
\]
which is obtained (with an extra $C^2 \epsilon^2$ factor) by applying twice the bootstrap assumption \eqref{uv-bi-unbal-boot}, since the $g$ difference is at least quadratic and contains at least one entry at frequency smaller but comparable to $\lambda$, as $\mu \approx \lambda$.

The contribution of the first difference 
requires the bound
\[
\| \partial (g_{[<\mu]} - g_{[<\mu]}^{x_0}) \partial v_\mu^{x_0} w_\lambda^{y_0}\|_{L^1_{t,x}} \lesssim C^2 \epsilon^2 \lambda |x_0| d_\lambda^2, 
\]
which is estimated similarly with two bilinear $L^2_{t,x}$ bounds after expanding
\[
g_{[<\mu]} - g_{[<\mu]}^{x_0} = \int_{0}^1 x_0 \cdot \nabla g^{hx_0}_{<\mu} \, dh.
\]

Applying the symmetric version of \eqref{vv-bi-bal}
to $w_\lambda$ we obtain the estimate 
\[
\| |D|^{\frac{3-n}2} \Re ( a \bar b v_\lambda v_\mu^{x_0} )\|_{L^2_{t,x}}^2 \lesssim C^2 \epsilon^2 \lambda d_\lambda^2 \, (1+\lambda |x_0|),
\]
from which the full bound \eqref{vv-bi-bal} follows by making appropriate choices for $a$ and $b$.

\bigskip

\emph{III C.  The unbalanced case $\mu \ll \lambda$.} 
The argument here is similar, but 
simpler because we will be able to directly treat $(g-I)$ perturbatively.

The advantage we have is that 
the dyadic annuli where $v_\mu$ and $v_\lambda$ are supported 
are separated. Here we are allowed to freely localize $v_\lambda$ 
with respect to angular directions by applying suitable multipliers:

\begin{lemma} 
Let $A(D)$ be a smooth, bounded multiplier at frequency $\lambda$.
If $v_\lambda$ satisfies \eqref{dl}, then so does $A(D) v_\lambda$.
\end{lemma}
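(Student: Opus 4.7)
The plan is to reduce the claim to a paradifferential equation of the same form as \eqref{para} for the transported function $\tilde v_\lambda := A(D) v_\lambda$, and then to control the two quantities that enter \eqref{dl} for this new function. Applying $A(D)$ to \eqref{para} and writing the elliptic operator as $L_\lambda := \partial_j g^{jk}_{[<\lambda]} \partial_k$, one sees that $\tilde v_\lambda$ solves
\[
i \partial_t \tilde v_\lambda + L_\lambda \tilde v_\lambda = \tilde f_\lambda, \qquad \tilde f_\lambda := A(D) f_\lambda + \partial_j [g^{jk}_{[<\lambda]}, A(D)] \partial_k v_\lambda.
\]
The initial data bound is immediate from the $L^2$ boundedness of $A(D)$, namely $\|\tilde v_\lambda(0)\|_{L^2} \lesssim d_\lambda$, so the substantive task is to prove $\sup_{x_0} \|\tilde v_\lambda \, \tilde f_\lambda^{x_0}\|_{L^1_{t,x}} \lesssim d_\lambda^2$, which I would split along the two components of $\tilde f_\lambda$.

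For the main piece $A(D) f_\lambda$, I would exploit that $A(D)$ is convolution with a kernel $K_A$ of $L^1$ norm $O(1)$, so that
\[
\tilde v_\lambda(x)\, (A(D) f_\lambda)^{x_0}(x) = \iint K_A(z_1) K_A(z_2)\, v_\lambda(x-z_1)\, f_\lambda^{x_0}(x-z_2)\, dz_1 dz_2.
\]
Taking $L^1_{t,x}$ and applying Minkowski, then translating $x \mapsto x+z_1$ inside, the inner integrand becomes $\|v_\lambda\, f_\lambda^{x_0+z_1-z_2}\|_{L^1_{t,x}}$, which by the translation-uniform bound built into \eqref{dl} is at most $d_\lambda^2$; the outer integrations in $z_1,z_2$ contribute only $\|K_A\|_{L^1}^2 = O(1)$.

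The commutator piece is the main obstacle and requires invoking the bilinear estimates already available for $v_\lambda$ under the bootstrap. I would use the standard symbolic expansion
\[
[g^{jk}_{[<\lambda]}, A(D)] = -i\, (\partial_{\xi_l} A)(D) \cdot (\partial_l g^{jk}_{[<\lambda]}) + R^{jk},
\]
where the principal part is a bounded multiplier of order $-1$ at frequency $\lambda$ composed with multiplication by $\partial g$, and the Schwartz remainder $R^{jk}$ can be represented via exactly the translation-integral identity already used for the $\bR^4_1$ analysis in Section~\ref{s:para}. Since $g - I$ is at least quadratic in $u_{[<\lambda]}$, the expression $\partial_j [g^{jk}_{[<\lambda]}, A(D)] \partial_k v_\lambda$ reduces, modulo admissible reshuffles of derivatives, to translation-invariant trilinear forms of the schematic shape $\lambda\, L(\partial \bfu_{[<\lambda]}, \bfu_{[<\lambda]}, v_\lambda)$ with uniformly bounded multilinear symbols. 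When paired with $\tilde v_\lambda^{x_0}$ and bounded in $L^1_{t,x}$, these are controlled by combining the unbalanced bilinear bound \eqref{uv-bi-unbal-boot} applied to pairings $u_\mu\, \tilde v_\lambda^{x_0}$ (for low frequencies $\mu$ on the $u$ inputs, with trivial dyadic summation in $\mu$) against the energy bound \eqref{v-ee-boot} for the remaining $v_\lambda$ factor. The $\epsilon^2$ smallness coming from the quadratic vanishing of $g-I$ at zero defeats the extra bootstrap constants, producing a total of size $C^2 \epsilon^2 d_\lambda^2 \ll d_\lambda^2$.

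The hard part is purely bookkeeping: ensuring that every derivative generated by the commutator expansion is either absorbed into a $\lambda$-weight on a frequency-$\lambda$ factor or into the order $-1$ smoothing carried by $(\partial_{\xi_l} A)(D)$, so that no spurious factor of $\lambda$ survives. Once this is organized, the commutator contribution is strictly perturbative and the bound $\sup_{x_0}\|\tilde v_\lambda \tilde f_\lambda^{x_0}\|_{L^1_{t,x}} \lesssim d_\lambda^2$ follows.
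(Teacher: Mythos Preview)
Your setup matches the paper's exactly: write the equation for $\tilde v_\lambda = A(D) v_\lambda$, identify the new source as $A(D) f_\lambda$ plus the commutator $\partial_j[g^{jk}_{[<\lambda]},A(D)]\partial_k v_\lambda$, handle the first by the $L^1$ kernel representation of $A(D)$, and recognize the commutator as a form of type $\lambda\, L(\partial g_{[<\lambda]}, v_\lambda)$. This is precisely the paper's argument, and your treatment of the $A(D)f_\lambda$ piece is in fact more explicit than the paper's one-word ``trivial''.

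The gap is in your estimate of the commutator contribution. You propose to pair one $u_\mu$ with $\tilde v_\lambda^{x_0}$ via the unbalanced bilinear bound \eqref{uv-bi-unbal-boot} and then control the remaining $v_\lambda$ factor with the energy bound \eqref{v-ee-boot}. This does not close: the energy bound places $v_\lambda$ only in $L^\infty_t L^2_x$, and after using one $L^2_{t,x}$ bilinear factor you are left needing the product of the second $u$ factor with $v_\lambda$ in $L^2_{t,x}$, which an $L^\infty_t L^2_x$ bound on $v_\lambda$ alone cannot provide (there is no available $L^2_t L^\infty_x$ control on $u_{<\lambda}$). The paper instead exploits that $g-I$ is at least quadratic in $u_{<\lambda}$ and applies Cauchy--Schwarz to split into \emph{two} unbalanced bilinear $L^2$ factors, pairing each $u_{<\lambda}$ input with one of the two $v$ factors:
\[
\lambda\, \| L(\partial \bfu_{<\lambda}, v_\lambda)\|_{L^2_{t,x}} \,\| L(\bfu_{<\lambda}, \tilde v_\lambda^{x_0})\|_{L^2_{t,x}} \lesssim C^2 \epsilon^2 d_\lambda^2,
\]
where each factor contributes a $\lambda^{-1/2}$ via \eqref{uv-bi-unbal-boot}, exactly cancelling the leading $\lambda$. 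Replace your ``one bilinear plus energy'' step by this two-bilinear split and the proof is complete.
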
 

\begin{proof}
The source term in the paradifferential equation for $A(D)v_\lambda$
is 
\[
g_{[\lambda]} = A(D) f_\lambda + \partial_j [A(D),g^{jk}_{[<\lambda]}] \partial_k v_\lambda.
\]
The $d_\lambda$ bound corresponding to the first term on the right is trivial, while the second term can be written in the form 
\[
\lambda L( \partial g_{[<\lambda]}, v_\lambda)  .
\]
Then its contribution to $d_\lambda$ can also be easily estimated using two bilinear unbalanced $L^2$ bounds,
\[
\|\lambda L( \partial g_{[<\lambda]}, v_\lambda) v_\lambda^{x_0}\|_{L^1_{t,x}} \lesssim \lambda \| L(\partial \bfu_{<\lambda} v_\lambda)\|_{L^2_{t,x}}
\| L(\bfu_{<\lambda} v_\lambda)\|_{L^2_{t,x}} \lesssim C^2 \epsilon^2 d_\lambda^2,
\]
as needed.
\end{proof}

Frequency localizing $v_\lambda$ to smaller sets, we may assume that $v_\mu$
and $v_\lambda$ are $O(\lambda)$ uniformly separated in frequency in a well chosen open set $\Theta$ of angular directions.  We still use the weight $a(x)= |x|$ in our interaction Morawetz identity.

Here we start with $\bJ^4$ as in \eqref{J4}, and seek to show that 
\begin{equation}\label{J4-unbal}
\int_{0}^T \bJ^4(v_\lambda,v_\mu^{x_0})\, dt \gtrsim 
\lambda^2 \mu^{1-n}\|v_\lambda \bar v_\mu^{x_0}\|^2_{L^2_{t,x}} 
+ O(C^2 \epsilon^2 \lambda \,d_\lambda^2 d_\mu^2).
\end{equation}
Combined with \eqref{I-bound} and \eqref{K-bound}, this would 
imply the unbalanced bilinear bound \eqref{vv-bi-unbal}.

For this we use the earlier computation in \eqref{J4-expansion},
and begin by estimating perturbatively the terms $\bR^4_2$ 
and $\bR^4_1$. The bound for $\bR^4_2$ is identical to the earlier 
one in the proof of Lemma~\ref{l:J4-bal}, and is omitted.
However, the $\bR^4_1$  bound is more difficult now, because we want our estimate to be uniform in $x_0$ and so we have no control over
the difference of the metrics, other than the naive comparison with the  identity matrix. We will argue differently  depending on the size of $x-y$ \blue{relative to $\mu^{-1}$, using a smooth partition of unity. }

\medskip
a) If $x-y$ is small,
$|x-y| \lesssim \mu^{-1}$, then we rewrite the integral as
\[
\iint \chi_{|z| < \mu^{-1}} a_{jk}(z) |v_\lambda(x)|^2 (g^{lj}_{[<\mu]} (x+z+x_0) - g^{lj}_{[<\lambda]}(x))
\partial_l \partial^j |v_\mu(x+x_0+z)|^2 \, dx dz.
\]
Here we bound the metric difference uniformly by $C^2 \epsilon^2$, and 
use twice the bound \eqref{uab-bi-unbal-boot} for fixed $z$. Finally the $z$ integral yields another $\mu^{1-n}$ factor. So the 
above integral is estimated by
\[
\lesssim C^2 \epsilon^2 \mu^2 \lambda^{-1} \mu^{n-1} \mu^{1-n} d_\lambda^2 d_\mu^2=
C^2 \epsilon^2 \mu^2 \lambda^{-1} d_\lambda^2 d_\mu^2 ,
\]
which suffices.

\medskip
a) If $x-y$ is large, $|x-y| \gtrsim \mu^{-1}$, then we integrate 
by parts to move the two derivatives on $a$. If one derivative falls on the metric then we get an $\bR^4_2$ type term which is good. 
Else, we obtain an integral of the form
\[
\iint \partial^2(\chi_{ > \mu^{-1}} a_{jk})(x-y) |v_\lambda(x)|^2 (g^{lj}_{[<\mu]} (y+x_0) - g^{lj}_{[<\lambda]}(x))
g_{<\mu} (y+x_0) |v_\mu(y+x_0)|^2 \, dx dy.
\]
We separate the $(g-I)$ factors and regroup with either $|v_\lambda|^2$
or $|v_\mu|^2$, \blue{ using for them the uniform in time $C^2\epsilon^2$  bound in $H^s$. }
The Fourier transform of $\partial^2 (\chi_{ > \mu^{-1}} a_{jk})$
is bounded by $\lesssim |\xi|^{3-n}$, \blue{so using multiplicative bounds as in \eqref{Hs-bi}} we estimate the above integral by 
\[
\lesssim C^2 \epsilon^2 \| |D|^{\frac{3-n}2} |v_\lambda|^2 \|_{L^2_{t,x}} \| |D|^{\frac{3-n}2} |v_\mu|^2 \|_{L^2_{t,x}} ,
\]
where the $C^2 \epsilon^2$ factor comes from $(g - I)$. 
This we can estimate using \eqref{vv-bi-bal-boot}  twice.

We are now left with the leading term $J^4_{main}$, where 
we have the chief advantage that the integrand is nonnegative
pointwise. This allows us to localize it on the $\mu^{-1}$ scale
around the diagonal,  using a nonnegative Schwartz cutoff $\phi(\mu x)$ to be chosen later:
\[
\bJ^4_{main} \gtrsim  \iint \phi(\mu(x-y))  a_{jk}(x-y) F^j \bar F^k\, d\theta .
\]
At this point we can perturbatively replace the metric $g$ 
with the identity in $F^j$, with the same argument as in the case of $\bR^4_1$, arriving at 
\[
\bJ^4_{main} \gtrsim  \bJ^4_{flat} + O(C^2 \epsilon^2 d_\lambda^2 \, d_\mu^2\,), 
\qquad
\bJ^4_{flat}:= \iint \phi(\mu(x-y))  a_{jk}(x-y) F_j \bar F_k \, d\theta.
\]
It remains to show that we have the constant coefficient bound
\begin{equation}\label{J4-bound}
 \int_0^T  \bJ^4_{flat}\, dt \gtrsim \lambda^2 \mu^{1-n} \| v_\lambda \bv_\mu^{x_0}\|_{L^2_{t,x}}^2 .
\end{equation}

To simplify the analysis we decompose our weight $a$ as 
\[
a(x) = |x| = c_n \int_{\S^n} a^\theta(x) d\theta, \qquad a^\theta(x) = |x \cdot \theta|. 
\]
This leads to a corresponding nonnegative foliation of the Hessian
\[
D^2 a = c_n \int_{\S^n} D^2 a^\theta(x)\, d\theta,
\]
and further
\[
\bJ^4_{flat} = \int_{\S^n} \iint \phi(\mu(x-y))  a^\theta_{jk}(x-y) F_j \bar F_k\,  dx dy d\theta := \int_{\S^n} \bJ^4_\theta \, d \theta .
\]
It suffices now to prove that the bound \eqref{J4-bound} holds for 
$\bJ^4_\theta$ for an open set of directions $\theta$. Here we choose exactly the set of directions in which $v_\mu$ and $v_\lambda$ are separated in frequency. Without any loss of generality assume that $\theta = e_1$. \blue{Then, splitting coordinates $x = (x_1,x')$,  we are looking at}
\[
\bJ^4_{e_1}(v_\lambda,v_\mu^{x_0}) = \int_{x_1=y_1} \phi(\mu(x'-y'))|F_1|^2 \,dx dy', 
\]
where
\[
F_1(x,y) :=  \partial_1 v_\lambda(x) \bar v_\mu^{x_0}(y)+ v_\lambda \partial_1 \bar v_\mu^{x_0}(y).
\]
 The last form of the right hand side allows us 
to interpret $F$ as a bilinear form for a fixed translation parameter $z=x-y$, and the cutoff $\phi(\mu \cdot)$ allows us to estimate $z'$ 
integrals.

Given our angular frequency localizations, within the support of $v_\lambda$ we have $\xi_1 \approx \lambda$, while in the support of $v_\mu$ we have $|\xi_1| \lesssim \mu$. Hence the first term in $F_1$ is the leading one, while the second is smaller. Precisely, using the bootstrap bilinear $L^2_{t,x}$ bound we can write 
\blue{
\[
\begin{aligned}
\int \bJ^4_{e_1}(v_\lambda,v_\mu^{x_0}) dt = & \ 
\int \!\!\int_{z_1=0} \phi(\mu z') |F_1(x,x+z)|^2 \,dx dz'dt 
\\
= & \ \!\! \int\! \!\int_{z_1=0} \phi(\mu z') |\partial_1 v_\lambda(x)|^2 |v_\mu^{x_0}(x+z)|^2 
\, dx dz' dt + O( C^2 \mu d_\lambda^2 d_\mu^2).
\end{aligned}
\]
}
For a well-chosen bump function $\phi$, the frequency localization of $v_\mu$ allows us to collapse the integral on the right to
\[
\mu^{1-n} \int |\partial_1 v_\lambda(x)|^2 |u_\mu^{x_0}(x)|^2 \, dx dt
\gtrsim \mu^{1-n} \| \partial_1 (v_\lambda \bv_\mu^{x_0}) \|_{L^2_{t,x}}^2,
\]
where the $x_1$ derivative yields a $\lambda$ factor due to the $\lambda$ frequency separation between $v_\lambda$ and $v_\mu$ in the $x_1$ direction, and where the derivative 
applied to $v_\mu$ yields a smaller, perturbative  $\mu$ factor. 
This implies \eqref{J4-bound} and thus \eqref{J4-unbal}, concluding the proof of the theorem in the final unbalanced case.

\end{proof}

%%%%%%%%%%%%%%%%%%%%%%%%%%%%%%%%%%%%%%%%%%%%%%%%%%%%%%%%%%%%%%%%%%%%%%%%%%%%%%%%%%%%%%%%%%%%%%%%%%%%%%%%%%%%%%%%%%%%%%%%%%%%%%%%%%%%%%%%%%%%%%%%%%%%%%%%%%%%%%%%%%%%%%%%%%%%%%

\section{Low regularity Strichartz bounds for the paradifferential flow}
\label{s:Str}

While the bilinear $L^2_{t,x}$ bounds play the leading role in estimating unbalanced interactions, the balanced interactions are instead controlled using Strichartz bounds. Since here we are considering cubic problems, the $L^4_{t,x}$ bound is the single most important one.
The way we handle the $L^4_{t,x}$ bounds depends somewhat on the dimension, where we distinguish two cases:

\begin{enumerate}
    \item high dimension, $n \geq 3$. There, on the one hand, $L^4_{t,x}$  is not a sharp Strichartz norm, and on the other hand the $L^4_{t,x}$ bound is obtained directly from the interaction Morawetz analysis, so this section is not playing any role in obtaining it.

    \item low dimension, $n = 2$. Here $L^4_{t,x}$  is a sharp Strichartz norm, but the $L^4_{t,x}$ bound cannot be obtained directly from the interaction Morawetz analysis. This is why this section is needed.
\end{enumerate}

Further elaborating on the case $n= 2$, we can make a further distinction in this case depending on whether we seek local or global  in time estimates: 

\begin{enumerate}[label=(2\roman*)]
    \item $n=2$, local in time bounds (needed for local well-posedness). The interaction Morawetz analysis yields a lossless
    $L^4_t L^8_x$ bound, which interpolated with the $L^\infty_t L^2_x$ energy estimate gives a lossless $L^6_t L^4_x$ bound. Applying Holder's inequality in time yields an $L^4_{t,x}$ bound with a $1/6$
    derivative loss, which suffices for the local result.

    \item $n=2$, global in time bounds (generically needed for global well-posedness). This is the single most interesting case, and also, the one case where the analysis here is 
    absolutely necessary, providing an $L^4_{t,x}$ bound with a $1/2$ derivative loss.
\end{enumerate}

Comparing the above discussion with our prior work \cite{IT-qnls} 
in one space dimension, we note that there the $L^6_{t,x}$ bound was the main one. For the local in time analysis, we proved a sharp $L^6_{t,x}$ bound, which by interpolation with energy bounds and Holder's inequality gave an $L^4_{t,x}$ bound with a $1/4$ derivative loss, which sufficed. For the global in time analysis on the other hand, we used the interaction Morawetz  analysis to prove a global $L^6_{t,x}$ bound with  a loss of derivatives; this in turn required the conservative and defocusing assumption, but avoided any need for an $L^4_{t,x}$ bound.

Instead, in the $n = 2$ case, no defocusing assumption is necessary. The conservative assumption does play a role though, in reducing the Sobolev regularity at which the global result holds;
this would have otherwise required an $L^4_{t,x}$ bound with a smaller $1/4$ derivative loss.

The result in this section provides a full range of Strichartz estimates in all dimensions, globally in time, but with a loss of derivatives: precisely one full derivative at the Pecher endpoint.
Strictly speaking we need it only in two space dimensions, but the proof is new and dimension independent, and we believe that the bounds it provides are of independent interest also in higher dimensions, and that the ideas here will also be useful for other
problems.

\blue{
For a broader perspective, one should note 
that the question of proving Strichartz estimates for linear Schr\"odinger evolutions with variable coefficients has been studied for more than 20 years. To start with, 
local in time full Strichartz estimates for compactly supported perturbations of the flat metric in $\R^n$ were considered in \cite{ST}, followed by \cite{RZ,TW,BT} for asymptotically flat metrics. Global in time results were then proved in \cite{T:Str} and then \cite{MMT-lin}, 
still in the asymptotically flat case. In the 
alternative setting of compact manifolds, \cite{BGT} proved local in time Strichartz estimates with derivative losses, obtained by adding up sharp Strichartz estimates on shorter, semiclassical time scales. For comparison purposes we note that all the metrics considered in these works have at least $C^2$ regularity, while the metrics in the present article are just above $C^\frac12$. On the other hand the estimates we prove in this section have losses of derivatives but are global in time,
so they cannot be obtained from lossless bounds in shorter time scales.}

\medskip

To review the set-up here, we seek $L^p_t L^q_x$ Strichartz bounds for the paradifferential problem
\begin{equation}\label{para-re}
i \partial_t v_\lambda + \partial_x g_{[<\lambda]} \partial_x v_\lambda 
= f_\lambda, \qquad v_\lambda(0) = v_{0,\lambda},
\end{equation}
where the pair of indices $(p,q)$ lies on the sharp Strichartz line
\begin{equation}
\frac{2}{p} + \frac{n}q = \frac{n}2, \qquad p \geq 2.    
\end{equation}
We will refer to the case $p=2$ as the Pecher endpoint, which is forbidden in dimension $n = 2$.

Classically one places the source term in dual Strichartz spaces.
While we certainly want to allow such sources, in our analysis 
we would like to allow for a larger class of functions which is
consistent with our needs for the quasilinear problem. One such class will be the space $DV^2_\Delta$ associated to the flat Schr\"odinger flow. Another class may be defined by  testing against solutions to the adjoint constant coefficient problem 
\begin{equation} \label{adjoint}
(i \partial_t + \Delta) w_\lambda = h_\lambda, \qquad w_{\lambda}(T) = w_{\lambda,T},
\end{equation}
defining a norm as follows:
\begin{equation}
\| f_\lambda \|_{\Ns} := \sup_{\| h_\lambda \|_{S'} + \|w_{\lambda,T}\|_{L^2_{t,x}} \leq 1} \left|\int f_\lambda w_\lambda \,dx dt\right| .
\end{equation}
For technical reasons we will use both $DV^2_\Delta$ and $N^\sharp$;
the first choice turns out to be better in the low dimension $n = 2$, as it allows us to bypass the forbidden endpoint,
and the second is better in the higher dimension $n \geq 3$, where the space $S'$ is well defined. Their common property is provided by the embeddings 
\begin{equation}\label{N-embed}
 N \subset \Ns, \qquad N \subset DV^2,
\end{equation}
which hold regardless of the dimension, where $N$ represents any of the admissible sharp dual Strichartz norms.

We will also need to measure 
$f_\lambda$ relative to $v_\lambda$, via the quantity 
\[
I(v_\lambda, f_\lambda) = \sup_{x_0} \| v_\lambda f_\lambda^{x_0}\|_{L^1_{t,x}} .
\]
One consequence of allowing such terms is that we can commute
the $x$ derivatives as we like in the paradifferential equation,
placing errors in the perturbative 
source term $f_\lambda$.

\begin{theorem}\label{t:para-se}
Assume that $u$ solves \eqref{qnls} in a time interval $[0,T]$, and satisfies the bootstrap bounds \eqref{uk-ee-boot},\eqref{uab-bi-unbal-boot},  \eqref{uab-bi-bal-boot}  and \eqref{uk-Str2-boot}  for some $s > \frac{n+1}2$. Then the following Strichartz estimates hold for the linear paradifferential equation \eqref{para-lin} for any pair of sharp Strichartz exponents $(p,q)$ and $(p_1,q_1)$:
\begin{equation}\label{Str-2}
\|v_\lambda\|_{L^\infty_t L^2_x} + \lambda^{-1} \| v_\lambda \|_{V^2_\Delta} \lesssim  \| v_{\lambda,0}\|_{L^2_x} 
 + \blue{\lambda^{-1}} \|f_\lambda\|_{DV^2_\Delta}
 + I(v_{\lambda},f_\lambda)^\frac12, \qquad n = 2,
\end{equation}
\begin{equation}\label{Str-3}
\|v_\lambda\|_{L^\infty_t L^2_x} + \lambda^{-1} \| v_\lambda \|_{L^2 L^\frac{2n}{n-2}} \lesssim  \| v_{\lambda,0}\|_{L^2_{x}} + 
 \blue{\lambda^{-1}} \|f_\lambda\|_{\Ns}
 + I(v_{\lambda},f_\lambda)^\frac12, \qquad n \geq 3.
\end{equation}
In particular we have
\begin{equation}\label{Str-all}
\lambda^{-\frac{2}p} \| v_\lambda \|_{L^p_t L^q_x} \lesssim  \| v_{\lambda,0}\|_{L^2} 
 + \lambda^{\frac{2}{p_1}} \|f_\lambda\|_{L^{p'_1}_tL^{q'_1}_x}.
\end{equation}
\end{theorem}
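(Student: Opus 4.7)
The strategy is to view the paradifferential flow as a perturbation of the flat Schr\"odinger equation, rewriting \eqref{para-re} as
\begin{equation*}
(i\partial_t + \Delta) v_\lambda = f_\lambda + \partial_j\bigl[(I - g_{[<\lambda]}^{jk}) \partial_k v_\lambda\bigr],
\end{equation*}
and bounding $v_\lambda$ by combining constant-coefficient $V^2_\Delta$/Strichartz theory with the bilinear $L^2$ bootstrap bounds on $u$ provided by Theorem~\ref{t:para}. The $L^\infty_t L^2_x$ portion of the estimate is immediate from the basic energy identity $\frac{d}{dt}\|v_\lambda\|_{L^2}^2 = 2\Im\int f_\lambda \bar v_\lambda\,dx$: integrating in time bounds the square of the $L^\infty_t L^2_x$ norm by $\|v_{0,\lambda}\|_{L^2}^2 + \|f_\lambda v_\lambda\|_{L^1_{t,x}}$, which is controlled by $\|v_{0,\lambda}\|_{L^2}^2 + I(v_\lambda,f_\lambda)$. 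Taking square roots produces the $I(v_\lambda,f_\lambda)^{1/2}$ contribution on the right and also identifies where that square root originates.

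For the Strichartz component, the plan is to use duality: the $V^2_\Delta$ norm (respectively $L^2_tL^{\frac{2n}{n-2}}_x$ norm) of $v_\lambda$ is tested against flat adjoint Schr\"odinger solutions $w_\lambda$ arising from data $w_T\in L^2$ and source $h_\lambda$ in the appropriate dual Strichartz space. After Duhamel and integration by parts, the pairing splits into three contributions. The linear-data piece is controlled by $\|v_{0,\lambda}\|_{L^2}$ via the $V^2_\Delta$ embedding. The pairing $\int f_\lambda \bar w_\lambda\,dxdt$ is controlled either by $\|f_\lambda\|_{DV^2_\Delta}$ (or $\|f_\lambda\|_{N^\sharp}$ in the higher-dimensional case) when $w_\lambda$ is estimated by its norm, or absorbed into $I(v_\lambda,f_\lambda)^{1/2}$ via Cauchy-Schwarz in the regime where the wave-packet structure of $w_\lambda$ aligns with a translate of $v_\lambda$.

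The delicate contribution is the metric correction
\begin{equation*}
\int (I - g_{[<\lambda]}^{jk})\,\partial_k v_\lambda\, \partial_j \bar w_\lambda\, dxdt,
\end{equation*}
which, since $g$ is at least quadratic at zero, admits a dyadic expansion as a sum of quintilinear expressions in $u,v_\lambda,w_\lambda$. Each such term is organized into two bilinear $L^2$ pairs: one of the form $(\bfu_\mu, v_\lambda)$ estimated by the bootstrap bounds \eqref{uab-bi-unbal-boot}, \eqref{uab-bi-bal-boot}, and one of the form $(\bfu_\nu, w_\lambda)$ estimated by Corollary~\ref{c:para}, which extends the unbalanced bilinear bounds to solutions of the flat Schr\"odinger equation. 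The $\epsilon^2$ smallness coming from the two factors of $u$ allows these terms to be absorbed on the left, while the two derivatives landing at frequency $\lambda$ account for the $\lambda$ loss appearing on the Strichartz side of \eqref{Str-2}--\eqref{Str-3}.

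The main obstacle I anticipate is the balanced high-high regime where both $u$-factors in $g_{[<\lambda]}-I$ sit at frequencies comparable to $\lambda$, since Corollary~\ref{c:para} does not cover balanced interactions between $u_\lambda$ and the flat solution $w_\lambda$. Here the $(u_\lambda, w_\lambda)$ factor must instead be estimated by a H\"older/Strichartz argument: in dimension $n\geq 3$ this is affordable because \eqref{uab-bi-bal-boot} yields a lossless $L^4_{t,x}$ bound for $u_\lambda$, while in dimension $n=2$ the $L^4_{t,x}$ line is sharp and one is forced to invoke the $L^4$ Strichartz bootstrap assumption \eqref{uk-Str2-boot}, whose half-derivative loss propagates through the argument and ultimately produces the full $\lambda$ loss at the Pecher endpoint. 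This is precisely why the $L^4$ bound was added to the bootstrap hierarchy. Once the endpoint bound is established, the remaining pairs $(p,q), (p_1,q_1)$ in \eqref{Str-all} follow by interpolation with the $L^\infty_t L^2_x$ estimate and a standard $TT^*$ duality for the inhomogeneous source.
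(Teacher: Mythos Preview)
Your approach is correct and matches the paper's: rewrite \eqref{para-re} as a flat Schr\"odinger equation with source $f_\lambda + f_\lambda^1$ where $f_\lambda^1 = \partial_j[(g_{[<\lambda]}^{jk}-I)\partial_k v_\lambda]$, get the $L^\infty_t L^2_x$ bound from the energy identity, and control the Strichartz/$V^2_\Delta$ norm by duality against flat adjoint solutions $w_\lambda$, estimating $\int f_\lambda^1 \bar w_\lambda$ via two unbalanced bilinear $L^2$ bounds.

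However, your anticipated ``main obstacle'' does not exist. By definition $g_{[<\lambda]} = P_{<\lambda} g(u_{<\lambda})$, so \emph{every} $u$-factor entering $g_{[<\lambda]}-I$ lives at frequency $\mu \ll \lambda$. There is no balanced high-high regime here, and consequently no need to invoke the $L^4$ Strichartz bootstrap \eqref{uk-Str2-boot} in this proof; the paper never uses it. The product $f_\lambda^1 \bar w_\lambda$ is estimated purely by the \emph{unbalanced} bilinear bounds: schematically $\|f_\lambda^1 \bar w_\lambda\|_{L^1} \lesssim \lambda^2 \|u_{<\lambda} v_\lambda\|_{L^2} \|u_{<\lambda} w_\lambda\|_{L^2}$, where the first factor is controlled by \eqref{uv-bi-unbal} from Theorem~\ref{t:para} (with $d_\lambda^2 \approx \|v_{0,\lambda}\|_{L^2}^2 + I(v_\lambda,f_\lambda)$) and the second by Corollary~\ref{c:para}. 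Each bilinear bound contributes a factor $\lambda^{-1/2}$ after summing over $\mu < \lambda$, so $\lambda^2 \cdot \lambda^{-1/2} \cdot \lambda^{-1/2} = \lambda$, which is the true source of the one-derivative loss in \eqref{Str-2}--\eqref{Str-3}. In the $n=2$ case the paper dualizes against $U^2_\Delta$ atoms rather than $S'$-sourced adjoint solutions, applying the bilinear bound on each atom interval and then $\ell^2$-summing; this is a minor technical variation on your duality sketch.
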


\blue{The first application of this result is to obtain Strichartz bounds for the solutions 
$u_\lambda$ to \eqref{qnls}:

\begin{corollary}\label{c:u-se}
  Assume that $u$ solves \eqref{qnls} in a time interval $[0,T]$, and satisfies the bootstrap bounds \eqref{uk-ee-boot},\eqref{uab-bi-unbal-boot},  \eqref{uab-bi-bal-boot}  and \eqref{uk-Str2-boot}. Then for all pairs $(p,q)$ of sharp  Strichartz exponents 
  $(p,q)$ we have:
  \begin{equation}
  \| u_\lambda \|_{L^p_t L^q_x}     \lesssim \epsilon c_\lambda \lambda^{-s+\frac{2}p}, 
  \qquad n \geq 3, \ T > 0.
  \end{equation}
  respectively
 \begin{equation}
  \| u_\lambda \|_{L^p_t L^q_x}     \lesssim \epsilon c_\lambda \lambda^{-s+\frac{2}p}, 
  \qquad n = 2, \ T\lesssim \epsilon^{-6}. 
  \end{equation}
  \end{corollary}

The case of the global in time bounds in dimension two in Theorem~\ref{t:local-fe2} (i),(iii) is discussed separately 
in Section~\ref{s:rough} for $s \geq 2$, respectively in Section~\ref{s:2low} for $s > \frac32$ under the conservative assumption.

}
\begin{proof}[Proof of Theorem~\ref{t:para-se}]
We first argue that \eqref{Str-all} follows from \eqref{Str-2}, 
respectively \eqref{Str-3}. On the left, we have a direct interpolation in the case of \eqref{Str-3}, while in the two dimensional case the interpolation is slightly more subtle,
\begin{equation}\label{interp2}
\lambda^{-\frac{2}p} \| v_\lambda \|_{L^p_t L^q_x} \lesssim 
\lambda^{-\frac{2}p} \| v_\lambda \|_{U^p_\Delta}
\lesssim \|v_\lambda\|_{L^\infty _tL^2_x} + \lambda^{-1} \| v_\lambda \|_{V^2_\Delta}.
\end{equation}
For the second term on the right in the case 
of \eqref{Str-3} we use the embedding $N \subset \Ns$. Similarly, in the case of \eqref{Str-2} we  have the embedding $L^{p'_1}_tL^{q'_1}_x \subset DV^{p'_1}_\Delta$.
Finally, the expression $I(v_\lambda,f_\lambda)$ 
can be estimated directly using H\"older's inequality.

Now we turn our attention to the proof of \eqref{Str-2} and \eqref{Str-3}.  We trivially have energy estimates
\[
\| v_\lambda \|_{L^\infty_t L^2_x} \lesssim \|v_\lambda(0)\|_{L^2_x} 
+ I(v_{\lambda},f_\lambda)^\frac12,
\]
so it remains to prove that 
\begin{equation}\label{Str-2a}
 \lambda^{-1} \| v_\lambda \|_{V^2_\Delta} \lesssim  \| v_{\lambda,0}\|_{L^2_x} 
 + \lambda^{-1} \|f_\lambda\|_{DV^2_\Delta}
 + I(v_{\lambda},f_\lambda)^\frac12, \qquad n = 2
\end{equation}
respectively
\begin{equation}\label{Str-3a}
 \lambda^{-1} \| v_\lambda \|_{L^2_t L^\frac{2n}{n-2}_x} \lesssim  \| v_{\lambda,0}\|_{L^2_x} +  \lambda^{-1} \|f_\lambda\|_{\Ns}
 + I(v_{\lambda},f_\lambda)^\frac12, \qquad n \geq 3.
\end{equation}
For this we reinterpret our equation as a flat Schr\"odinger equation by moving the $(g-I)$ contribution to the right, and then viewing it as a source term, which can be estimated as in $d_\lambda$ using two bilinear $L^2$ bounds. Precisely, we have
\begin{equation}\label{para-re-cc}
(i \partial_t + \Delta) v_\lambda 
= f_\lambda + f_\lambda^1, \qquad v_\lambda(0) = v_{0,\lambda},
\end{equation}
where
\begin{equation}
   f_\lambda^1 :=  \partial_x (g_{[<\lambda]}-I) \partial_x v_\lambda .
\end{equation}
\blue{Since $g-I$ is at least quadratic, we can express $f_\lambda^1$ in the form
\begin{equation}\label{fl1-rep}
  f_\lambda^1 = \lambda^2 L( h_0(\bfu_{\ll\lambda}), \bfu_{\ll\lambda}, \bfu_{\ll\lambda},v_\lambda)  
\end{equation}
with a smooth function $h_0$.
}

For \eqref{Str-2a} we estimate directly 
\[
\| v_\lambda \|_{V^2_\Delta} \lesssim \| v_\lambda(0)\|_{L^2_x}
+ \| f_\lambda\|_{DV^2_\Delta} 
+ \| f_\lambda^1\|_{DV^2_\Delta},
\]
where only the last term on the right still needs to be controlled,
\begin{equation}\label{DV2-bd}
\| f_\lambda^1\|_{DV^2_\Delta}^2 \lesssim \lambda ( \|v_\lambda(0)\|_{L^2_x}^2 + I(v_\lambda,f_\lambda)).
\end{equation}

We postpone the proof of the last bound, and turn our attention to \eqref{Str-3a}. There the argument goes by duality. Given $w_\lambda$ a solution to the constant coefficient adjoint  Schr\"odinger equation (which is the same) with source term $h_\lambda \in S'$  we write the duality relation
\[
\la v_\lambda,w_\lambda \ra|_0^T = \int^T_0\int v_\lambda h_\lambda - w_\lambda (f_\lambda +f_\lambda^1) \, dx dt,
\]
where we maximize the first term on the right over 
$\|h_{\lambda}\|_{S'} \leq 1$, which in turn  gives $\|w\|_{S} \lesssim 1$. 
This yields 
\[
\| v_\lambda\|_{S} \lesssim \|v_\lambda(0) \|_{L^2} + \|f_\lambda\|_{\Ns} + \|f_\lambda^1\|_{\Ns}.
\]
It remains to control the last term,
\begin{equation}\label{Ns-bd}
\| f_\lambda^1\|_{\Ns}^2 \lesssim \lambda ( \|v_\lambda(0)\|_{L^2_x}^2 + I(v_\lambda,f_\lambda)).
\end{equation}
This requires an $L^1_{t,x}$ bound for
$w_\lambda f_\lambda^1$, for which we use the representation \eqref{fl1-rep} to write 
\[
\| w_\lambda f_\lambda^1\|_{L^1_{t,x}} \lesssim \lambda^2 \sup_{x_0} \| v_\lambda u_{\ll \lambda}^{x_0}\|_{L^2_{t,x}}
 \sup_{x_0}\| w_\lambda u_{\ll \lambda}^{x_0} \|_{L^2_{t,x}}.
\]
Here we observe that for $w_\lambda$ we can control the associated $d_\lambda$ as in \eqref{dl} by $O(1)$, so we can use two unbalanced bilinear $L^2_{t,x}$ estimates from Theorem~\ref{t:para}, respectively Corollary~\ref{c:para} to bound this by
\[
\| w_\lambda f_\lambda^1\|_{L^1_{t,x}}  \lesssim  \lambda(\| v_\lambda(0)\|_{L^2_x}  + I(v_{\lambda},f_\lambda)^\frac12),
\]
as needed.

The proof of \eqref{DV2-bd} is somewhat similar. By duality 
it suffices to estimate in $L^1_{t,x}$ the product $f_\lambda^1 w_\lambda$
but this time for $w_\lambda \in U^2_\Delta$. It suffices to do this  for an $U^2_\Delta$ atom, 
\[
w_\lambda = \sum 1_{I_k} w_{\lambda,k},
\]
where $I_k$ are arbitrary disjoint time intervals and $w_{\lambda,k}$ are $L^2$ homogeneous Schr\"odinger waves.
Using Cauchy-Schwarz and two bilinear $L^2$ bounds as in Theorem~\ref{t:para} respectively Corollary~\ref{c:para} for each of the time intervals $I_k$ we have 
\[
\begin{aligned}
\| w_\lambda f_\lambda^1\|_{L^1_{t,x}} \lesssim & \  \lambda^2 
\sup_{x_0}\| v_\lambda  u_{\ll\lambda}^{x_0}
\|_{L^2} \sup_{x_0}(\sum_k \| w_{\lambda,k} u_{\ll \lambda}^{x_0} \|_{L^2_{t,x}(I_k)}^2)^\frac12 \\\lesssim & \ 
\lambda (\| v_\lambda(0)\|_{L^2_x}  + I(v_{\lambda},f_\lambda)^\frac12) ( \sum_k \| w_{\lambda,k}(0)\|_{L^2_x}^2)^\frac12, 
\end{aligned}
\]
as needed.

\end{proof}
\blue{
\begin{proof}[Proof of Corollary~\ref{c:u-se}]
To begin with, we recall that $u_\lambda$ solves the paradifferential equation 
with source term $f_\lambda = N_\lambda(u)$. Then in dimension $n \geq 3$ the bound for 
$I(u_\lambda,N_\lambda(u))$ is given by Proposition~\ref{p:N-lambda}. 
We also need the estimate 
\[
\|N_\lambda(u) \|_{N^\sharp} \lesssim \epsilon c_\lambda \lambda^{-s+1}  ,
\]
which is equivalent to
\[
\| N_\lambda(u) \bar w_\lambda\|_{L^1_{t,x}} \lesssim \epsilon c_\lambda \lambda^{-s} ( \|w_{\lambda,T}\|_{L^2_{t,x}}+ \|h_\lambda\|_{S'})
\]
for $w_\lambda$ solving \eqref{adjoint}, and is in turn proved by replacing one instance of \eqref{uab-bi-unbal-boot} by Corollary~\ref{c:para} in the 
proof of Proposition~\ref{p:N-lambda}.

In dimension $n=2$, the same applies but the last bound is now replaced by   
\[
\|N_\lambda \|_{DV^2_\Delta} \lesssim \epsilon c_\lambda \lambda^{-s}  .
\]
To see this again as a consequence of Corollary~\ref{c:para}, we use duality to test $N_\lambda(u)$ against $U^2_\Delta$ functions, so that the last bound is equivalent to 
\[
\| N_\lambda(u) \bar w_\lambda\|_{L^1_{t,x}} \lesssim \epsilon c_\lambda \lambda^{-s}  \|w_{\lambda}\|_{U^2_\Delta}.
\]
This is proved by replacing one instance of \eqref{uab-bi-unbal-boot} by Corollary~\ref{c:para1} in the proof of Proposition~\ref{p:N-lambda}.
\end{proof}
}

%%%%%%%%%%%%%%%%%%%%%%%%%%%%%%%%%%%%%%%%%%%%%%%%%%%%%%%%%%%%%%%%%%%%%%%%%%%%%%%%%%%%%%%%%%%%%%%%

\section{The local/global well-posedness result}
\label{s:rough}
%%%%%%%%%%%%%%%%%%%%%%%%%%%%%%%%%%%%%%%%%%%%%%%%%%%%%%%%%%%%%%%%%%%%%%%%%%%%%%%%%%%%%%%%%%%%%%%%

In this section we prove our main local well-posedness result in Theorems~\ref{t:local2} and \ref{t:local3+}. The same proof also yields the global well-posedness result in Theorem~\ref{t:global3} in high dimensions, 
as well as, with small adjustments, the two dimensional global well-posedness result in Theorem~\ref{t:global2}. For simplicity we will work with the problem \eqref{qnls}; the corresponding arguments for \eqref{dqnls} are identical, and are omitted.

Our starting point here is the local well-posedness result for regular data in Theorem~\ref{t:regular}. For these solutions we will prove the frequency envelope bounds in Theorems~\ref{t:local-fe3}, \ref{t:local-fe2}[(i),(ii)], which in turn allow us to continue the solutions, with 
uniform bounds, as follows:

\begin{enumerate}
    \item globally in time if $n \geq 3$,
    \item globally in time if $n=2$ for restricted $s$  i.e.,  $s \geq 2$.
    \item up to time $\epsilon^{-6}$ if $n=2$ at low regularity $s > \frac32$.
\end{enumerate}

We will then prove Theorems~\ref{t:local2},\ref{t:local3+},\ref{t:global3},\ref{t:global2}
by constructing rough solutions as unique limits of smooth solutions. We proceed in several steps:

\bigskip

\subsection{ A-priori bounds for regular solutions: Proof of Theorems~\ref{t:local-fe3},~\ref{t:local-fe2}[(i)(ii)].}
We recall that we can freely make the bootstrap assumption that 
the bounds \eqref{uk-ee-boot} and \eqref{uab-bi-unbal-boot} hold with a large universal constant $C$. Then we seek to apply Theorem~\ref{t:para} with $v_\lambda=u_\lambda$. To achieve this we write the equation for $u_\lambda$
in the paradifferential form \eqref{para-full}, using Proposition~\ref{p:N-lambda} for the source term bound. \blue{This suffices in dimension $n \geq 3$, and also in dimension $n = 2$ under the time constraint $T \ll \epsilon^{-6}$ corresponding to case (ii) of Theorem~\ref{t:local-fe2}.

It remains to consider the case $n=2$ for larger times, as in 
 case (i) of Theorem~\ref{t:local-fe2}. There, Proposition~\ref{p:N-lambda} yields
 the desired bound for the $F_\lambda$ component of $N_\lambda$, so it remains to estimate directly the contribution of the balanced cubic term $C_\lambda$. This we can do using an $L^4_{t,x}$ bound for $u_\lambda$, which then has to be inserted inside our bootstrap loop. Precisely, we claim that our solution $u$ also satisfies the dyadic $L^4_{t,x}$
 bounds \eqref{uk-Str2}. To prove this we add the corresponding bootstrap assumption \eqref{uk-Str2-boot}. This in particular allows us to estimate the $C_\lambda$ contribution  
as in Proposition~\ref{p:N-lambda} by
 \begin{equation}\label{Cl}
\| C_\lambda(\bfu,\bfu,\bfu) u_\lambda^{x_0}\|_{L^1_{t,x}} \lesssim \lambda^2 \|u_\lambda\|_{L^4_{t,x}}^4 \lesssim C^4 \epsilon^4 c_\lambda^4 \lambda^{-4s+4} \lesssim \epsilon^2 c_\lambda^2 \lambda^{-2s},
 \end{equation}
where at the last step we assumed $\epsilon \ll_C 1$ and, critically, the constraint $s \geq 2$.

 Finally, we need to close the $L^4_{t,x}$ part of the bootstrap loop and prove that \eqref{uk-Str2} holds. For this we use the $L^4_{t,x}$ Strichartz estimate \eqref{Str-2a} in Theorem~\ref{t:para-se}. By the interpolation inequality \eqref{interp2} this yields 
\[
\begin{aligned}
\lambda^{-\frac12} \|u_\lambda\|_{L^4_{t,x}} \lesssim & \ \|u_\lambda(0)\|_{L^2_{x}} 
+ \lambda^{-1} \|N_\lambda(u)\|_{DV_\Delta^2} + I(u_\lambda,N_\lambda(u))^\frac12
\\
\lesssim & \ \|u_\lambda(0)\|_{L^2_{x}} 
+ \lambda^{-1} \|F_\lambda(u)\|_{DV_\Delta^2} 
+  I(u_\lambda,F_\lambda(u))^\frac12
\\ & \ + \lambda^{-1} \|C_\lambda(\bfu,\bfu,\bfu)\|_{L^\frac43_{t,x}} 
+  I(u_\lambda,C_\lambda(\bfu,\bfu,\bfu))^\frac12.
\end{aligned}
\]
The $C_\lambda$ terms are estimated directly using the $L^4_{t,x}$ bootstrap bound 
\eqref{uk-Str2-boot}. The expression $I(u_\lambda,F_\lambda(u))$ has already been estimated in Proposition~\ref{p:N-lambda}. Lastly, the $DV^2_\Delta$ norm is estimated by 
\begin{equation}\label{Fl-DV2}
\lambda^{-1} \| F_\lambda(u)\|_{DV_\Delta^2} \lesssim \lambda^{-1} \| F_\lambda(u)\|_{L^\frac43_{t,x}} \lesssim C^3\epsilon^3 c_{\lambda}^2 \lambda^{-2s+\frac32}  \lesssim \epsilon c_{\lambda} \lambda^{-s},
\end{equation}
which suffices. Here, at the last step, we only used the fact that $F_\lambda$ is at least cubic and unbalanced, with the two highest frequencies comparable and $\gtrsim \lambda$,
therefore we can use a bilinear unbalanced bound \eqref{uab-bi-unbal-boot} and 
an $L^4_{t,x}$ bound \eqref{uk-Str2-boot}. 

Notably, we remark that our application of Theorem~\ref{t:para-se} only used the 
weaker constraint $s >\frac32$, while the stronger one $s \geq 2$ was only needed 
in \eqref{Cl}.

}

This concludes the proof of the above Theorems. For later use, we note that in particular the bounds for the paradifferential source terms in Proposition~\ref{p:N-lambda} are also valid for our solutions. 

\subsection{Higher regularity}
Here we consider regular solutions as above, with initial data $u_0$ which is not only $\epsilon$ small in $H^s$, but also belongs to a smaller space $H^{\sigma}$, say
\[
\|u_0\|_{H^s} \leq \epsilon, \qquad \| u_0\|_{H^{s_1}} \leq M, \qquad \sigma > s,
\]
where $M$ is possibly very large.
Then we claim that we have a uniform bound for the solution
in $H^{\sigma}$,  precisely
\begin{equation}
\| u\|_{L^\infty_t H^{\sigma}_x} \lesssim M.    
\end{equation}

To prove this,  for the initial data $u_0$ we consider a minimal $H^s$ frequency envelope $\epsilon c_\lambda$ with the unbalanced slowly varying condition as in Remark~\ref{r:unbal-fe}. Then by construction we must also have
\[
\sum_\lambda (\epsilon c_\lambda \lambda^{\sigma-s})^2 \lesssim M^2.
\]
By Theorems~\ref{t:local-fe3}, \ref{t:local-fe2} the frequency envelope $\epsilon c_\lambda$ is propagated along the flow, and we obtain
\[
\| u(t) \|_{H^{\sigma}}^2 \lesssim \sum_\lambda (\epsilon c_\lambda \lambda^{\sigma-s})^2 \lesssim M^2, \qquad t \in [0,T].
\]
We remark that not only the uniform $L^2$ higher regularity bounds are propagated along the flow, but also the corresponding bilinear $L^2_{t,x}$ and Strichartz bounds.

\subsection{ Continuation of regular solutions}  
Given a regular initial data $u_0 \in H^{\sigma}$ where $\sigma> \frac{n}2 +\frac32$, and  which is also small in $H^s$, we ask whether a regular solution exists
and for how long it can be continued:

\begin{proposition} \label{p:continuation}
For every regular initial data  $u_0 \in H^{\sigma}$  
and which also satisfies the smallness condition
\begin{equation}\label{small-data}
\| u_0 \|_{H^s} \leq \epsilon \ll 1,   
\end{equation}
there exists a local solution $u \in C_t H^{\sigma}_x$.
Furthermore, this local solution can be continued for as long as 
\[
\| u \|_{L^\infty_t H^s_x} \lesssim \epsilon.
\]
\end{proposition}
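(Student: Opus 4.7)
The plan is to combine the classical local existence theory for regular data with the frequency envelope a priori bounds already established. First I would invoke Theorem~\ref{t:regular}, which, since $\sigma > \frac{n+3}{2}$, produces a local solution $u \in C([0,T_0]; H^\sigma)$ for some $T_0 > 0$ depending on $\|u_0\|_{H^\sigma}$. A standard continuation argument then gives a maximal time of existence $T_{\max} > 0$ with the usual blow-up alternative: either $T_{\max} = \infty$, or $\limsup_{t \to T_{\max}^-} \|u(t)\|_{H^\sigma} = +\infty$. Hence, to establish the continuation criterion, it suffices to produce a uniform $H^\sigma$ bound on any subinterval $[0,T^*] \subset [0,T_{\max})$ on which the $H^s$ smallness $\|u\|_{L^\infty_t H^s_x} \lesssim \epsilon$ holds.

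To do so I would construct a minimal $H^s$ frequency envelope $\{\epsilon c_\lambda\}$ for $u_0$, using the more flexible slowly varying condition described in Remark~\ref{r:unbal-fe}. By the minimality of $c_\lambda$, this envelope automatically captures the stronger Sobolev norm, namely
\[
\sum_\lambda (\epsilon c_\lambda \lambda^{\sigma - s})^2 \lesssim \|u_0\|_{H^\sigma}^2.
\]
I would then apply Theorem~\ref{t:local-fe3} if $n \geq 3$ (respectively Theorem~\ref{t:local-fe2}(i) or (iii) if $n = 2$) on $[0,T^*]$ relative to this envelope; this yields uniform dyadic bounds $\|u_\lambda(t)\|_{L^2_x} \lesssim \epsilon c_\lambda \lambda^{-s}$. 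Summing with the weight $\lambda^{2\sigma}$ then gives
\[
\|u(t)\|_{H^\sigma}^2 \lesssim \sum_\lambda (\epsilon c_\lambda \lambda^{\sigma-s})^2 \lesssim \|u_0\|_{H^\sigma}^2, \qquad t \in [0,T^*],
\]
which rules out $H^\sigma$ blow-up precisely as long as the $H^s$ smallness persists, thus delivering the claimed continuation.

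The main obstacle is verifying that Theorems~\ref{t:local-fe3}/\ref{t:local-fe2} remain valid for envelopes that are only slowly varying in the weakened unbalanced sense of Remark~\ref{r:unbal-fe}. Concretely, one must re-examine the bootstrap loop in Proposition~\ref{p:boot} — the bounds \eqref{uk-ee-boot}, \eqref{uab-bi-unbal-boot}, \eqref{uab-bi-bal-boot} together with the $L^4$ bound \eqref{uk-Str2-boot} in the 2D case — and check that every multilinear estimate used in the analysis of the source term $N_\lambda(u)$ (compare Proposition~\ref{p:N-lambda}) and in the interaction Morawetz arguments of Section~\ref{s:para} admits the larger constant $2^{C|j-k|}$ loss between distant dyadic shells. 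Since all the crucial estimates there pair a low-frequency tail with at least one high-frequency factor estimated by a bilinear $L^2$ bound, each such loss is absorbed by the companion dyadic gain, so the loop closes exactly as before.

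A secondary, routine point is the continuity-in-time argument underlying the bootstrap: the bounds hold trivially with a small constant at $t = 0$, and the set of times on which they hold with constant $C$ is both open and closed in $[0,T^*]$ provided $\epsilon$ is sufficiently small, so they extend to the full interval. Once the uniform $H^\sigma$ bound is in hand, iterating the local existence from Theorem~\ref{t:regular} on a sequence of intervals of uniform length completes the continuation.
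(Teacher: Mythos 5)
There is a genuine gap at the very first step: you ``invoke Theorem~\ref{t:regular}'' to produce the local $H^\sigma$ solution as if this were automatic, but the data $u_0$ is only assumed small in $H^s$, not in $H^\sigma$, so from the point of view of Theorem~\ref{t:regular} (and of the underlying large-data result of \cite{MMT3}) this is \emph{large} data, and local well-posedness then requires verifying a nontrapping condition for the metric $g(u_0)$. This verification is the heart of the paper's proof of Proposition~\ref{p:continuation}: one shows (Lemma~\ref{l:nontrap}) that data satisfying only the $H^s$ smallness \eqref{small-data}, with $s>\frac{n+1}2$, is uniformly nontrapping, by a bootstrap on the Hamilton flow showing the bicharacteristics are nearly straight, combined with a trace estimate for $u_0$ and $\nabla u_0$ along such nearly straight curves to close the bootstrap. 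Without this ingredient your appeal to the local existence theory, and likewise the re-application of it at the endpoint of the maximal interval needed for continuation, is unjustified; the ``standard blow-up alternative'' you cite is not available off the shelf here precisely because the large-data theory is conditional on nontrapping.

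The remainder of your argument — propagating a minimal $H^s$ frequency envelope with the relaxed slowly varying condition of Remark~\ref{r:unbal-fe} and summing with weight $\lambda^{2(\sigma-s)}$ to get a uniform $H^\sigma$ bound — matches the paper's ``higher regularity'' step, and your concern about the compatibility of the bootstrap with the weakened envelope condition is addressed by the paper (Remark~\ref{r:unbal-fe} asserts all estimates tolerate it). One further small point the paper handles and you do not: a uniform $H^\sigma$ bound on $[0,T)$ does not by itself give existence of $u(T)$ in $H^\sigma$; the paper first obtains convergence in $H^{\sigma-2}$ from the equation and then upgrades it using the propagated frequency envelope, before contradicting maximality. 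That step is routine, but the missing nontrapping lemma is not.
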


Combining this result with  Theorems~\ref{t:local-fe3},~\ref{t:local-fe2}[(i)(iii)], we obtain the following 
lifespan bounds:

\begin{corollary}\label{c:continuation}
Given initial data as in Proposition~\ref{p:continuation}, 
a regular solution exists 

(i) globally in time in dimension $n \geq 3$,

(ii) up to time $O(\epsilon^{-6})$ in dimension $n =2$.    
\end{corollary}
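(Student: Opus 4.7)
The plan is to combine the local existence at $H^\sigma$ regularity from Theorem~\ref{t:regular} with the a priori envelope bounds from Theorems~\ref{t:local-fe3}, \ref{t:local-fe2}, which together provide the crucial lifespan extension down to the $H^s$ threshold. Once the Proposition~\ref{p:continuation} is established in this way, the corollary follows by reading off the time ranges in which those theorems operate.

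For the proposition itself, I would first invoke Theorem~\ref{t:regular} to produce a regular solution $u\in C([0,T_0]; H^\sigma)$ on an initial interval whose length a priori depends on $\|u_0\|_{H^\sigma}$. The proof of that theorem also yields a blow-up criterion: the solution persists past time $T$ provided $\|u\|_{L^\infty([0,T]; H^\sigma)}$ stays finite. Next, following the higher regularity subsection above, I would choose a minimal $H^s$ frequency envelope $\{\epsilon c_\lambda\}$ for the initial data, in the unbalanced slowly varying sense of Remark~\ref{r:unbal-fe}, so that $\sum_\lambda (\epsilon c_\lambda \lambda^{\sigma - s})^2 \lesssim \|u_0\|_{H^\sigma}^2$. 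As long as the $H^s$ smallness $\|u\|_{L^\infty_t H^s_x} \lesssim \epsilon$ holds, Theorems~\ref{t:local-fe3}, \ref{t:local-fe2} propagate this envelope, and reassembling through the $\lambda^{\sigma-s}$ weights yields
\[
\|u(t)\|_{H^\sigma}^2 \lesssim \sum_\lambda (\epsilon c_\lambda \lambda^{\sigma-s})^2 \lesssim \|u_0\|_{H^\sigma}^2.
\]
A standard continuity argument on the maximal interval of existence then rules out $H^\sigma$ blow-up and, through the blow-up alternative, gives the continuation statement of the proposition.

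Granting the proposition, the corollary is immediate: the required smallness $\|u\|_{L^\infty_t H^s_x} \lesssim \epsilon$ is exactly the output of the envelope bound \eqref{uk-ee} delivered by Theorem~\ref{t:local-fe3} for arbitrary $T$ when $n \geq 3$, and by Theorem~\ref{t:local-fe2}[(i),(iii)] for $T \lesssim \epsilon^{-6}$ when $n = 2$. Plugging these lifespans into Proposition~\ref{p:continuation} yields the two cases of the corollary.

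The main subtlety is logical threading rather than any hard new estimate: Theorems~\ref{t:local-fe3}, \ref{t:local-fe2} themselves close a bootstrap (Proposition~\ref{p:boot}) under the assumption that a smooth solution exists on $[0,T]$. One must therefore run a joint continuity argument in which the $H^s$-level bootstrap of Proposition~\ref{p:boot} and the $H^\sigma$ persistence coming from the envelope transfer are recovered simultaneously on expanding subintervals. Fortunately, the higher regularity step depends only on the already-closed $H^s$ bounds rather than on a separate higher-regularity bootstrap, so the two layers decouple cleanly and the continuity argument closes.
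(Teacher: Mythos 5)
Your proposal is correct and follows essentially the paper's own route: the corollary is obtained by feeding the a priori $H^s$ frequency envelope bound \eqref{uk-ee} from Theorems~\ref{t:local-fe3}, \ref{t:local-fe2} into the continuation criterion of Proposition~\ref{p:continuation}, closed by a continuity argument on expanding time intervals (your sketch of the proposition itself is redundant for the corollary, whose hypotheses already grant it). One small bookkeeping point: in dimension $n=2$ the $T\lesssim \epsilon^{-6}$ a priori bound at $s>\frac32$ without the conservative assumption is case (ii) of Theorem~\ref{t:local-fe2}, not cases (i)/(iii); otherwise nothing to change.
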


\begin{proof}[Proof of Proposition~\ref{p:continuation}]
For local solutions we want to apply Theorem~\ref{t:regular}. We remark that  even if we have the data $H^s$ smallness in \eqref{small-data}, there is no guarantee that the 
its $H^\sigma$ size is small. Hence we need the large data version 
of local well-posedness from \cite{MMT3}, which in turn requires verifying the  nontrapping assumption. Indeed, we claim that

\begin{lemma}\label{l:nontrap}
Let $\epsilon \ll 1$. Then the initial data $u_0$ satisfying the smallness condition \eqref{small-data} are uniformly nontrapping.
\end{lemma}

Here by uniformity we mean that the intersection of any geodesic 
for the metric $g$ with a ball $B(x_0,r)$ has length  $\lesssim r$,
with a uniform constant.

Assuming that the lemma holds, the large data local well-posedness theorem in \cite{MMT3} guarantees a local $H^\sigma$ solution. Let $[0,T)$ be the maximal interval on which an $H^\sigma$ solution exists.
Our apriori bounds guarantee that the $H^\sigma$ size of the solution remains bounded, 
\[
\|u(t) \|_{H^\sigma} \lesssim \|u(0) \|_{H^\sigma}, \qquad t \in [0,T),
\]
as well as that we have a  corresponding $H^\sigma$ frequency envelope bound, uniformly in time. This does not directly guarantee that the limit 
\[
u(T) = \lim_{t \to T} u(T)
\]
exists in $H^\sigma$.
However, by examining the \eqref{qnls} equation one easily sees that 
the limit exists in $H^{\sigma -2}$, which combined with the frequency envelope bound implies that the limit exists in $H^\sigma$.
This in turn contradicts the maximality of $T$, and concludes the proof of the proposition.
\medskip

It remains to prove the  nontrapping lemma:

\begin{proof}[Proof of Lemma~\ref{l:nontrap}]
The equations for the Hamilton flow have the form
\[
\dot x^j  = 2 g^{jk} \xi_k, \qquad \dot \xi_j = \partial_{x_j} g^{kl}
\xi_k \xi_l,
\]
where $g = g(u_0)$.
Here the full symbol $g^{jk} \xi_j \xi_k$ is a conserved quantity, 
so once we fix $|\xi(0)|=1$ by scaling, we have the global uniform bound $|\xi(t)|  \approx 1$.

We will prove that the bicharacteristics are nearly straight, in the sense that 
\begin{equation}\label{H-flow}
|\xi(t) - \xi(0)| \lesssim \epsilon, \qquad |\dot x(t) - \dot x(0)|
\lesssim \epsilon.
\end{equation}
Given that $|g - I| \lesssim \epsilon$,  the second bound clearly implies the first, so it remains to prove the second. For this we make a bootstrap assumption
\begin{equation}\label{H-flow-boot}
 |\xi(t) - \xi(0)|
\leq C\epsilon,
\end{equation}
with a large universal constant $C$. Then we also have 
\[
 |\dot x(t) - \dot x(0)| \lesssim C\epsilon \ll 1,
\]
which in turn indicates that our bicharacteristic, call it $\gamma$, is nearly straight.   It remains to estimate
\[
|xi(T) - \xi(0)| \lesssim \int_{0}^T |\nabla g(x(t)|\, ds
\lesssim \int_{0}^t |u_0(x(t))| |\nabla u_0(x(t))|\, ds.
\]
Since $\gamma$ is nearly straight and $s > \frac{n+1}{2}$, by the trace theorem it follows that 
\[
\| u\|_{L^2(\gamma)} + \|\nabla u\|_{L^2(\gamma)} \lesssim \| u_0\|_{H^s} \lesssim \epsilon.
\]
Then by Cauchy-Schwarz inequality we have 
\[
|\xi(T) - \xi(0)|\lesssim \epsilon^2,
\]
which closes our bootstrap and thus the proof of the nontrapping lemma.
\end{proof}

This in turn completes the proof of the continuation proposition.
\end{proof}

\subsection{A-priori $L^2$ bounds for the linearized equation: Proof of Theorem~\ref{t:linearize-fe}.}\label{s:lin-proof}

Here we seek again to apply Theorem~\ref{t:para} by writing the linearized equation in the form \eqref{para-lin}.  Correspondingly, we take $d_\lambda$ to be an $L^2$ frequency envelope for the initial data.

It suffices then to make a bootstrap assumption for $v_\lambda$,
which is the same as in the proof of Theorem~\ref{t:para},
and to prove the following bounds on the source terms $N_\lambda^{lin} v$, which represent the linearized equation counterpart of Proposition~\ref{p:N-lambda}:

\begin{proposition}
 Let $s \geq \frac{n+1}2$. Assume that the function $u$ satisfies   the bounds \eqref{uk-ee},\eqref{uab-bi-unbal} and \eqref{uab-bi-bal} in a time interval $[0,T]$, where $T \leq \epsilon^{-6}$
 in dimension $n=2$. Assume also that $v$ satisfies the bootstrap bounds \eqref{v-ee-boot}-\eqref{vv-bi-unbal-boot}. Then for $\epsilon$ small enough, the functions $N^{lin}_\lambda v$ satisfy 
 \begin{equation}\label{good-nl-lin}
\| N^{lin}_\lambda v  \cdot \bv_\lambda^{x_0}\|_{L^1_{t,x}} \lesssim \epsilon^2 d_\lambda^2 .
 \end{equation}
 The similar bound holds if we replace $\bv_\lambda^{x_0}$ by $\bar w_\lambda$, where $w_\lambda$ is a solution for the linear flat Schr\"odinger flow \eqref{adjoint}.
 
\end{proposition}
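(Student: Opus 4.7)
The plan is to mimic the proof of Proposition~\ref{p:N-lambda}, using the same case-by-case dyadic decomposition of the source, but with one factor of $u$ replaced by a factor of $v$ (respectively $w$). The source $N_\lambda^{lin} v$ is obtained by linearizing the nonlinearity in Section~\ref{s:lin} around $u$, so it is a sum of multilinear expressions that are \emph{at least quadratic in $u$, linear in $\mathbf v$} (with $\mathbf v \in \{v,\bar v\}$), and carry exactly two derivatives. The structural classification A/B (with subcases A1, A2, A3 for highest frequency $\mu \gg \lambda$ and B1, B2, B3 for highest frequency $\approx \lambda$, followed by B3b(i)--B3b(iv) in two dimensions) transfers verbatim, and the aim is to produce, in each case, a bound of the form $\epsilon^2 d_\lambda^2$ via a pairing $\| (\cdot)(\cdot)\|_{L^2} \cdot \| (\cdot)(\cdot)\|_{L^2}$ involving the factor $\bar v_\lambda^{x_0}$.

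The key pairing principle is the following: in the product $N_\lambda^{lin} v \cdot \bar v_\lambda^{x_0}$, the distinguished $\bar v_\lambda^{x_0}$ factor must always be paired with either another $v$-factor (via the $(v,v)$-bilinear bounds \eqref{vv-bi-bal} and \eqref{vv-bi-unbal}) or with a $u$-factor (via \eqref{uv-bi-bal} or \eqref{uv-bi-unbal}); the remaining multilinear expression is then itself a product for which we employ one more bilinear $L^2$ estimate together with, where necessary, a Strichartz bound or a Bernstein inequality. Compared to Proposition~\ref{p:N-lambda}, the only arithmetic change is the replacement of one factor $c_\mu \mu^{-s}$ by the cruder factor $d_\mu$; this loses powers of $\epsilon$ but is compensated by the extra smallness implicit in $d_\lambda^2$ on the right-hand side, so that the bootstrap constants can be absorbed exactly as in the nonlinear case. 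The second assertion, with $\bar v_\lambda^{x_0}$ replaced by $\bar w_\lambda$, is handled identically: $w_\lambda$ satisfies the same unbalanced bilinear bounds via Corollary~\ref{c:para}, and its $L^2$ norm at fixed time plays the role of $d_\lambda$.

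The case analysis splits as follows. In Case A (highest frequency $\mu \gg \lambda$), whether the high-frequency entries carry $u$ or $v$, one always has at least two $\mu$-separated high entries, so one can form one $(\cdot, \bar v_\lambda^{x_0})$-pair and one high-high pair; the worst arrangement, where $v$ appears at a high frequency $\mu$, is controlled by \eqref{uv-bi-unbal} in place of \eqref{uab-bi-unbal}. Case B1 (commutator terms with a single factor at frequency $\lambda$) is handled exactly as in Proposition~\ref{p:N-lambda} with two applications of the unbalanced bilinear bound, where one of them becomes a $(u,v)$-bound. Case B2 reduces to B1 after noting that the two $\lambda$-entries must be $\lambda$-separated. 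Case B3 (three or more $\lambda$-factors) subdivides by dimension: in $n \geq 3$ one uses four $L^4$ bounds (combining \eqref{uk-l4-3d} for the $u$-factors with the $v$-version obtained from \eqref{vv-bi-bal}); in $n = 2$ one combines \eqref{uk-l4-2d} with the analogous $L^4 L^8$ estimate for $v$ coming from \eqref{vv-bi-bal} by interpolation, paying the price of the time restriction $T \leq \epsilon^{-6}$ for the residual purely balanced cubic term exactly as in subcase B3b(iv).

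The main technical obstacle, and the reason for the $T \leq \epsilon^{-6}$ hypothesis, is precisely the balanced cubic contribution in two dimensions where all four factors sit at frequency $\lambda$ and three of them can be parallel; there the $L^4_{t,x}$ Strichartz bound is not scale-invariant, and one must convert it to an $L^6_t L^4_x$ bound via the Strichartz estimates of Theorem~\ref{t:para-se} (applied to $v$ through the bootstrap), then use H\"older in time to create the small factor $T^{1/3} \epsilon^{6} \lesssim 1$ that closes the estimate. Apart from this delicate endpoint, all remaining terms are genuinely perturbative and the numerology matches that of Proposition~\ref{p:N-lambda} after the substitution $c_\mu \mu^{-s} \leftrightarrow d_\mu$ at the $v$-slot.
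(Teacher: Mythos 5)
Your proposal follows exactly the route the paper takes: the paper's own proof of this proposition is a one-line deferral stating it is ``virtually identical'' to the proof of Proposition~\ref{p:N-lambda}, and your case-by-case transcription --- replacing one $u$-slot by $v$ (or $w$) and substituting the $(u,v)$/$(v,v)$ bilinear bounds together with the $L^4$-type bounds for $v$ derived from \eqref{vv-bi-bal} --- is precisely that adaptation. Apart from minor bookkeeping slips (the $L^6_t L^4_x$ bound used in the 2D balanced resonant case comes from interpolating the energy bound with the $L^4_t L^8_x$ bound rather than from Theorem~\ref{t:para-se}, which does not cover that non-sharp pair, and the smallness created there is of the form $T^{1/3}\epsilon^{2}$ paired with the time restriction $T\lesssim \epsilon^{-6}$ rather than ``$T^{1/3}\epsilon^{6}$''), this is the intended argument.
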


\begin{proof}
This proof is virtually identical to the proof of Proposition~\ref{p:N-lambda}, so the details are left for the 
interested reader.

\end{proof}

\bigskip

\subsection{Rough solutions as limits of smooth solutions}

Here we show that the $H^s$ energy estimates for the full equation, combined with the $L^2$ energy estimates for the linearized equation, both in the frequency envelope formulation, imply the local well-posedness results in 
Theorems~\ref{t:local2} and \ref{t:local3+}, as well as the global well-posedness results in Theorems \ref{t:global3} and \ref{t:global2}. A standard argument applies here, and for which we only outline the steps.  For more details we 
refer the reader to the same argument in our prior 1D paper \cite{IT-qnls},  the expository paper~\cite{IT-primer}, where the strategy of the proof is presented in detail,
\blue{as well as the more abstract presentation in \cite{ABITZ}.} Here it is more convenient to use the dyadic notation for frequencies, allowing also also for an easier comparison with \cite{IT-qnls}, \cite{IT-primer}.
The steps are as follows:

\begin{description}
    \item[I. Initial data regularization]
 given $u_0 \in H^s$, with frequency envelope $\epsilon c_k$, we consider 
 the regularized data 
\[
u_0^h = P_{<h} u_0,
\]
where $h \in \R^+$,
for which we can use an improved frequency envelope
\[
c^h_j = \left\{ \begin{array}{lc}
c_j & j \leq k,
\cr 
c_h \step^{-N(j-h)} & j > h,
\end{array} 
\right.
\]
with decay beyond frequency $2^h$.

For this data we consider the corresponding smooth solutions $u^h$. Using the continuation result for regular solutions in Proposition~\ref{p:continuation} and the  bounds in Theorems~\ref{t:local-fe3} and  \ref{t:local-fe2}, a continuity argument shows that these 
solutions extend as regular solutions
up to times $T$ as in  Theorems~\ref{t:local-fe3} and  \ref{t:local-fe2} with similar frequency envelope bounds
\begin{equation}\label{ul-fe}
 \|P_{j} u^h \|_{L^\infty_t L^2_x}
 \lesssim \epsilon c_j^h 
 \step^{-sj}.
\end{equation}
\medskip

\item[II. Difference bounds] Defining 
\[
v^h = \frac{d}{dh} u^h,
\]
 which solves the linearized equation around $u^h$,
with initial data
\[
v^h(0) = P_h u_0,
\]
satisfying
\[
\|v^h(0)\|_{L^2_x} \lesssim \epsilon \step^{-sh} c_h,
\]
we use the linearized energy estimates in Theorem~\ref{t:linearize-fe} to obtain the $L^2$ bounds
\begin{equation}\label{vl-fe}
\| v^h\|_{L^\infty_t L^2_x} \lesssim  \epsilon\step^{-sh} c_h.   
\end{equation}

\medskip

\item[III. Convergence] 
Here we define the rough solution  $u$ as the limit of $u^h$ via the telescopic sum
\[
u = u^0 + \sum_{k\geq 0} u^{k+1}-u^k.
\]
By \eqref{vl-fe} we have 
\begin{equation}\label{dul-fe}
\| u^{k+1}-u^k\|_{L^\infty_t L^2_x} \lesssim  \epsilon \step^{-sk} c_k,  
\end{equation}
which implies rapid convergence of the series in $L^2$, and 
\begin{equation}\label{dul-fe-u}
\| u-u^k\|_{L^\infty_t L^2_x} \lesssim  \epsilon \step^{-sk} c_k.   
\end{equation}

On the other hand by \eqref{ul-fe} we have 
\begin{equation}\label{dul-feh}
\| u^{k+1}-u^k\|_{L^\infty_t H^{s+m}_x} \lesssim  \epsilon \step^{sm} c_k.
\end{equation}
Combining the last two relations, it follows that the summands in the series are almost orthogonal in $H^s$, and we have
\begin{equation}\label{ddul-fe-diff}
\| u^{k}-u^j\|_{L^\infty_t H^{s}_x} \lesssim  \epsilon c_{[j,k]}, \qquad j < k.
\end{equation}
This shows convergence in $L^\infty_t H^s_x$, and 
\begin{equation}\label{ddul-fe-ldiff}
\| u-u^j\|_{L^\infty_t H^{s}_x} \lesssim  \epsilon c_{\geq j}. 
\end{equation}
Now it is easily verified that the limit $u$ solves the equation \eqref{qnls}, and also that it satisfies all the bounds in Theorems~\ref{t:local-fe3} and \ref{t:local-fe2}.

\medskip

\item[IV. Continuous dependence] This again follows 
the argument in \cite{IT-primer}. The weak Lipschitz
dependence implies convergence in a weaker topology, which is then upgraded to strong convergence using the uniform decay of the high frequency tails derived from frequency envelope bounds.  
\end{description}

\subsection{Scattering }
Our aim here is to prove the scattering part of our 
global well-posedness theorems. For clarity we begin with the simpler higher dimensional case in Theorem~\ref{t:global3}, and then discuss the differences in the 2D case.

We begin by writing the equation for each $u_\lambda$
as as constant coefficient Schr\"{o}dinger flow,
\[
(i\partial_t + \Delta) u_\lambda = f_{\lambda}:= N_\lambda(u) - (g_{[<\lambda]}- I) \partial^2 u_\lambda.
\]
Now we estimate the source term $f_\lambda$, taking our
cue from Section~\ref{s:Str}. For $N_\lambda$ we have 
the bounds from Proposition~\ref{p:N-lambda}, while for 
the second term on the right we repeat the proof of \eqref{Ns-bd}. This yields
\begin{equation}\label{Flambda}
\| f_{\lambda} \|_{\Ns} \lesssim \epsilon^3 c_\lambda^3 \lambda^{-s+1}    ,
\end{equation}
which has a one derivative loss. This bound holds globally in time, but if we restrict the time interval we further 
obtain the decay
\begin{equation}
\lim_{T \to \infty}    \| f_{\lambda} \|_{\Ns[T,\infty)} = 0.
\end{equation}
This is a consequence of the fact that the proof of 
\eqref{Flambda} uses at least one translation free bilinear $L^2_{t,x}$ bound, for which we can directly use the
absolute continuity of the $L^2$ norm.

We remark that we could also simply use $L^{\frac43}_{t,x}$ bounds, but that would be non-optimal as it would involve a larger loss of derivatives (two in 3D).
The bound \eqref{Flambda} and the above decay property allow us to estimate
\begin{equation}
\| u_\lambda (t) - e^{i(t-s) \Delta} u_\lambda(s)\|_{L^2_x} \lesssim   \epsilon^3 c_\lambda^3 \lambda^{-s+1},
\end{equation}
with the continuity property
\begin{equation}
   \lim_{t,s \to \infty}  \| u_\lambda(t) - e^{i(t-s) \Delta} u_{\lambda}(s)\|_{L^2_x} = 0.
\end{equation}
This implies that the limit
\[
u_{\lambda}^\infty = \lim_{t \to \infty} e^{-it\Delta} u_\lambda(t) 
\]
exists in $L^2$, and 
\begin{equation}
  \|  u_{\lambda}^\infty - u_{0\lambda}\|_{L^2_x}  \lesssim   \epsilon^3 c_\lambda^3 \lambda^{-s+1}.
\end{equation}
On the other hand, passing to the limit in the energy bounds for $u_\lambda$ we also get
\begin{equation}
   \|  u_{\lambda}^\infty\|_{L^2_x}  \lesssim   \epsilon c_\lambda \lambda^{-s}  .
\end{equation}
Then we can set
\[
u^\infty = \sum_\lambda u_\lambda^\infty \in H^s,
\]
where, combining the above properties, we have 
the strong limit
\begin{equation}
\lim_{t \to \infty}     e^{-it\Delta} u(t) = u^\infty \qquad \text{ in } H^s, 
\end{equation}
i.e., in the strong topology. This concludes the proof of the scattering result in dimensions three and higher.

The proof of the scattering result in two dimensions is similar, with only two differences:

\begin{itemize}
    \item The space $\Ns$ is now replaced by $DV^2_\Delta$, exactly as in the proof of the Strichartz estimates~\ref{s:Str}.

    \item The bound for $N_\lambda$ now separates into two parts: the unbalanced part $F_\lambda$, for which we can still use the estimates in Proposition~\ref{p:N-lambda}, and the cubic balanced part $C_\lambda$, for 
    which we simply use the $L^4$ bound \eqref{uk-Str2}.
    This allows us to estimate $C_\lambda$ in $L^\frac43 \subset DV^2$.
    We remark that the $L^4$ bound \eqref{uk-Str2} has a loss of $1/2$ derivative, but this is acceptable since our target is a bound with a one derivative loss, and we are at least one half derivative above scaling. 
    
\end{itemize}

\section{%Interaction Morawetz for the nonlinear flow and 
The 2D global result at low regularity} \label{s:2low}

The aim of this final section is to complete the proof of the 
global well-posedness result in Theorem~\ref{t:global2c}. In view of the local well-posedness result in Theorem~\ref{t:local2} and its associated frequency envelope  bounds in Theorem~\ref{t:local-fe2}, it suffices to show that the frequency envelope bounds in Theorem~\ref{t:local-fe2} (iii) hold globally in time. Further, it suffices to do this under appropriate bootstrap assumptions, see Proposition~\ref{p:boot}.

For the proof of the local well-posedness result in Theorem~\ref{t:local2}, as well as the global result,
except for the low regularity regime in dimension $n=2$,
it was enough to interpret the (QNLS) evolution as a paradifferential equation with a perturbative source term. Here, on the other hand, we also single out the balanced cubic terms.
At higher regularity $s \geq 2$, these balanced cubic terms were estimated using the $L^4_{t,x}$ Strichartz estimates \eqref{uk-Str2}
with a half derivative loss. At lower regularity, this loss 
turns out to be too large, and instead we need a more complex argument. For clarity, we begin with a brief description of our strategy for balanced cubic terms:

\begin{enumerate}[label=(\roman*)]
    \item Low frequency terms: for these we simply establish and use an $L^4_{t,x}$ Strichartz bound perturbatively.
    \item High frequency terms without phase rotation symmetry. These may be further divided into two terms
\begin{itemize}
 \item transversal, where the three input frequencies and the output frequency may be split into two pairs of separate frequencies, and then we can perturbatively use two bilinear    
$L^2_{t,x}$ bounds.

\item nonresonant, which heuristically may be eliminated using a normal form correction, which in our setting corresponds to quartic density and flux corrections for the mass and the momentum. 
\end{itemize}    
\item High frequency terms with phase rotation symmetry, where we use the conservative assumption to construct a suitable flux correction modulo a perturbative part.
\end{enumerate}

Our goal is to prove the bounds \eqref{uk-ee}, \eqref{uab-bi-unbal}, \eqref{uab-bi-bal} and \eqref{uk-Str2}. In doing this, without any loss of generality we can make the bootstrap assumptions \eqref{uk-ee-boot}, \eqref{uab-bi-unbal-boot}, \eqref{uab-bi-bal-boot} and \eqref{uk-Str2-boot}. To accomplish this, we expand the analysis started in Section~\ref{s:df-2c}.

We consider the localized mass and momentum densities
\[
M_\lambda = M_\lambda(u,\bar u), 
\]
\[
P_{\lambda}^j = P_{\lambda}^j(u,\bar u) ,
\]
which satisfy the conservation laws
\[
\partial_t M_\lambda(u) = \partial_j  P^j_{\lambda}(u)
+ C^{4,res}_{\lambda,m}(u) + C^{4,nr}_{\lambda,m}(u) +F^4_{\lambda,m}(u),
\]
respectively
\[
\partial_t P^j_{\lambda}(u) = \partial_x E^{jk}_{\lambda}(u)
+ C^{4,res}_{\lambda,p}(u) + C^{4,nr}_{\lambda,p}(u)+F^4_{\lambda, p}(u),
\]
where $C^{4,res}$ represents the contribution of balanced cubic terms with phase rotation symmetry and $C^{4,nr}$ represent the 
contribution of balanced cubic nonresonant terms without the phase 
rotation symmetry. Here we remark that these terms are needed only at high frequency $\lambda \gg 1$, as at low frequency $\lambda \lesssim 1$ we can place all balanced cubic contributions in $F^4_{\lambda}$ due to the $L^4_{t,x}$ bootstrap bound \eqref{uk-Str2-boot}.

From Lemma~\ref{l:Fmp} we have bounds for the source terms $F^4_{\lambda}$,
namely
\begin{equation}\label{pert-flux-source}
 \|F^4_{\lambda,m}(u) \|_{L^1_{t,x}} \lesssim \epsilon^4 C^4 c_\lambda^4 \lambda^{-2s},
 \qquad
 \|F^4_{\lambda,p}(u) \|_{L^1_{t,x}} \lesssim \epsilon^4 C^4 c_\lambda^4
\lambda^{-2s+1}.
\end{equation}

However, the contributions of $C^{4,res}_{\lambda}$ and $C^{4,nr}_{\lambda}$
cannot be also estimated directly in $L^1_{t,x}$, and will be treated separately.

\medskip

\emph{ A. The $C^{4,res}_{\lambda}$ term.} Here we need to use the conservative assumption. This guarantees that the quartic forms $C^{4,res}$ have the additional structural property 
\begin{equation}\label{c4-conserv}
c^{4,res}_{\lambda,m}(\xi,\xi,\xi,\xi) = 0, 
\qquad 
c^{4,res}_{\lambda,p}(\xi,\xi,\xi,\xi) = 0, \quad \xi \in \R^2.
\end{equation}

At the symbol level, this allows us to obtain a decomposition 
\[
\lambda^{-1} c^{4,res}_{\lambda,m}(\xi_1,\xi_2,\xi_3,\xi_4)
= i(\xi_{odd} -\xi_{even})\cdot  r^{4,res}_{\lambda,m}(\xi_1,\xi_2,\xi_3,\xi_4) + i \Delta^4 \xi \cdot q^{4,res}_{\lambda,m}
(\xi_1,\xi_2,\xi_3,\xi_4),
\]
where all symbols on the right are smooth on the $\lambda$ scale and bounded.
Separating variables in $r^{4,res}_{\lambda, m}$, we arrive at the following 

\begin{lemma}
The quartic form $C^{4,res}_m$ admits a smooth representation
of the form  
\begin{equation}
\lambda^{-1} C^{4,res}_{\lambda,m}(u,\bu,u,\bu) = \partial_x
 Q^{4,res}_{\lambda,m}(u,\bu,u,\bu) + \sum_j \partial_x R^2_{j,a}(u,\bu)
 R^2_{j,b} (u,\bu),
\end{equation}
where the symbols $r_{j,a}$, $r_{j,b}$ are localized at frequency $\lambda$, bounded 
and smooth on the $\lambda$ scale, and sum is rapidly convergent in $j$.
\end{lemma}
We remark that one corollary of this is the bound 
\begin{equation} \label{en-bal}
\left| \int  C^{4,res}_m(u,\bu,u,\bu)\, dx dt \right|
\lesssim \lambda \sum_j \| |D|^\frac12 R^2_{j,a}(u,\bu)\|_{L^2_{t,x}}
\| |D|^\frac12 R^2_{j,b}(u,\bu)\|_{L^2_{t,x}},
\end{equation}
where the right hand side may be estimated using 
the bilinear balanced bound \eqref{uab-bi-bal-boot}.
This suffices for energy estimates, but not for interaction Morawetz, where we would like to have $L^1_{t,x}$ bounds instead.

To address this issue, there we use a paraproduct decomposition for the terms in the series, writing 
\[
\begin{aligned}
\partial_x R^2_{j,a}(u,\bu) R^2_{j,b} (u,\bu)
= & \ 
T_{\partial_x R^2_{j,a}(u,\bu)} R^2_{j,b} (u,\bu)
+ \Pi(\partial_x R^2_{j,a}(u,\bu), R^2_{j,b} (u,\bu))
\\ &\ - T_{\partial_x Q^2_{j,b}(u,\bu)} Q^2_{j,a} (u,\bu)
 + \partial_x T_{ Q^2_{j,b} (u,\bu)} Q^2_{j,a}(u,\bu).
\end{aligned}
\]
The first three terms can be estimated in $L^1$
by the right hand side in \eqref{en-bal}, while the 
last term together with $Q^{4,res}_{\lambda,m}$ go into a flux correction. Precisely, we have proved the following
\begin{proposition}\label{p:bal-sources}
Assume that $c^{4,res}_{\lambda, m}$ satisfies \eqref{c4-conserv}. 
Let $u$ satisfy \eqref{uk-ee-boot} and \eqref{uab-bi-bal-boot}.
Then there exists a decomposition
\begin{equation}
 C^{4,res}_{\lambda,m}(u) = \partial_x \tilde Q^{4,res}_{\lambda,m}(u) +  F^{4,res}_{\lambda,m}(u) , 
\end{equation}
so that 
\begin{equation}\label{flux-bal}
\|  \tilde Q^{4,res}_{\lambda,m}(u) \|_{L^\frac65_t L^\frac43_x} \lesssim C^4 \epsilon^4 c_\lambda^4 \lambda^{1+\frac56-4s}    ,
\end{equation}
while
\begin{equation}\label{error-cor-res}
\|  F^{4,res}_{\lambda,m}(u) \|_{L^1_{t,x}} \lesssim C^4 \epsilon^4 c_\lambda^4 \lambda^{3-4s}  .  
\end{equation} 
\end{proposition}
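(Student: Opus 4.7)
The plan is to combine the representation in the preceding lemma with a Bony paraproduct decomposition, isolating the exact $\partial_x$-derivatives into the flux correction $Q^{4,bal}_{\lambda,m}$ while collecting the rest into the $L^1_{t,x}$-perturbative remainder $F^{4,bal}_{\lambda,m}$. From the lemma
\[
\lambda^{-1} C^{4,bal}_{\lambda,m} = \partial_x R^4_m + \sum_j \partial_x Q^2_{j,a}\cdot Q^2_{j,b},
\]
I apply Bony's decomposition $\partial_x Q^2_a \cdot Q^2_b = T_{\partial_x Q^2_a} Q^2_b + T_{Q^2_b}\partial_x Q^2_a + \Pi(\partial_x Q^2_a, Q^2_b)$, and then rewrite the middle paraproduct via $T_{Q^2_b}\partial_x Q^2_a = \partial_x T_{Q^2_b} Q^2_a - T_{\partial_x Q^2_b} Q^2_a$. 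This puts the whole expression in the form [exact $\partial_x$] + [three non-derivative terms]. Grouping the exact-derivative pieces with $R^4_m$ and the rest into the error, I set
\[
Q^{4,bal}_{\lambda,m}(u) := \lambda R^4_m + \lambda\sum_j T_{Q^2_{j,b}} Q^2_{j,a},
\]
\[
F^{4,bal}_{\lambda,m}(u) := \lambda\sum_j\Bigl( T_{\partial_x Q^2_{j,a}} Q^2_{j,b} - T_{\partial_x Q^2_{j,b}} Q^2_{j,a} + \Pi(\partial_x Q^2_{j,a}, Q^2_{j,b})\Bigr),
\]
so that $C^{4,bal}_{\lambda,m} = \partial_x Q^{4,bal}_{\lambda,m} + F^{4,bal}_{\lambda,m}$ by construction.

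For the $L^1_{t,x}$ estimate \eqref{error-cor-res} on $F^{4,bal}$, each of the three error terms is bilinear in the balanced forms $Q^2_a, Q^2_b$ with exactly one total derivative, which standard paraproduct H\"older estimates redistribute evenly, as already noted in \eqref{en-bal}:
\[
\|T_{\partial_x Q^2_a} Q^2_b\|_{L^1_{t,x}} + \|\Pi(\partial_x Q^2_a, Q^2_b)\|_{L^1_{t,x}} \lesssim \||D|^{1/2} Q^2_a\|_{L^2_{t,x}} \||D|^{1/2} Q^2_b\|_{L^2_{t,x}}.
\]
The balanced bilinear bootstrap \eqref{uab-bi-bal-boot}, specialised to $n=2$, $\mu=\lambda$, $x_0=0$, controls $\||D|^{1/2}Q^2\|_{L^2_{t,x}}\lesssim C^2\epsilon^2 c_\lambda^2 \lambda^{-2s+1/2}$; the outer $\lambda$ together with the rapid $j$-convergence from the lemma then yields $\|F^{4,bal}\|_{L^1_{t,x}}\lesssim C^4\epsilon^4 c_\lambda^4 \lambda^{-4s+2}$, comfortably below the target $\lambda^{3-4s}$.

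For the $L^{6/5}_t L^{4/3}_x$ estimate \eqref{flux-bal} on $Q^{4,bal}$, each flux term is a quartic form in $u_\lambda$ at frequency $\approx\lambda$. With the H\"older split $\tfrac{5}{6}=\tfrac{1}{3}+\tfrac{1}{2}$ and $\tfrac{3}{4}=\tfrac{1}{2}+\tfrac{1}{4}$,
\[
\|T_{Q^2_b} Q^2_a\|_{L^{6/5}_t L^{4/3}_x} \lesssim \|Q^2_b\|_{L^3_t L^2_x}\|Q^2_a\|_{L^2_t L^4_x} \lesssim \|u_\lambda\|_{L^6_t L^4_x}^2\|u_\lambda\|_{L^4_t L^8_x}^2.
\]
Here $L^4_t L^8_x$ is controlled directly by \eqref{uk-l4-2d} at rate $\lambda^{-s+1/4}$, while $L^6_t L^4_x$ follows by interpolation of $L^4_t L^8_x$ with the energy bound $L^\infty_t L^2_x$ at rate $\lambda^{-s+1/6}$; the $R^4_m$ contribution is handled identically by H\"older-splitting four copies of $u_\lambda$ into two $L^6_t L^4_x$ and two $L^4_t L^8_x$ factors. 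Multiplying out and absorbing the outer $\lambda$ produces $\lesssim C^4\epsilon^4 c_\lambda^4 \lambda^{2-1/6-4s}$, well below the target $\lambda^{2+5/6-4s}$. The main obstacle is careful bookkeeping: one must verify that the Bony decomposition preserves the $O(\lambda)$ frequency localisation of $C^{4,bal}_{\lambda,m}$, that the rapid $j$-convergence from the lemma is uniform under the paraproduct split, and that each translation-invariant bilinear form $Q^2$ (which is not literally $|u|^2$) is compatible with the scalar-pair bootstrap \eqref{uab-bi-bal-boot}; all three reduce to standard dyadic summation and symbol expansions, with no conceptual content beyond what is done above.
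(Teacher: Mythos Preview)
Your proof is correct and follows essentially the same approach as the paper: the same paraproduct decomposition of $\partial_x Q^2_{j,a}\cdot Q^2_{j,b}$ (in fact you write it out more cleanly than the paper, which has a typographical error in the corresponding display), the same $L^1_{t,x}$ control of the three non-derivative pieces via \eqref{en-bal} and \eqref{uab-bi-bal-boot}, and the same interpolation between the energy bound and the balanced bilinear bound for the flux estimate. Your explicit H\"older split $L^{6/5}_tL^{4/3}_x \leftarrow L^3_tL^2_x \times L^2_tL^4_x$ and the use of $L^6_tL^4_x$, $L^4_tL^8_x$ is just a concrete instance of the interpolation the paper invokes, and the sharper exponents you obtain ($\lambda^{2-4s}$ and $\lambda^{11/6-4s}$ versus the stated $\lambda^{3-4s}$ and $\lambda^{17/6-4s}$) are consistent with the paper's remark that \eqref{flux-bal} is not sharp.
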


We note that in \eqref{flux-bal} we have in effect a full range
of estimates between $L^1_t L^2_x$ and $L^\frac32_t L^1_x$, of which the endpoints are not so useful to us, so we chose the middle.
To prove it, we interpolate between the energy bound \eqref{uk-ee-boot} and the balanced bilinear $L^2$ bound \eqref{uab-bi-bal-boot}.

\medskip{The $C^{4,nr}$ term.} For this term we use a normal form type correction to both the densities and the fluxes:

\begin{proposition}\label{p:nr-sources}
 
Let $u$ satisfy \eqref{uk-ee-boot} and \eqref{uab-bi-bal-boot}.
Then there exists a decomposition
\begin{equation}
 C^{4,nr}_{\lambda,m}(u) = - \partial_t B^{4,nr}_{\lambda,m}(u)
 + \partial_x Q^{4,nr}_{\lambda,m}(u) +  F^{6,nr}_{\lambda,m}(u)  ,
\end{equation}
so that we have the fixed time bound
\begin{equation}\label{energy-cor-nr}
\|  B^{4,nr}_{\lambda,m}(u) \|_{L^1_x} \lesssim C^4 \epsilon^4 c_\lambda^4 \lambda^{2-4s}    ,
\end{equation}
and the space-time bounds
\begin{equation}\label{flux-nr}
\|  Q^{4,nr}_{\lambda,m}(u) \|_{L^\frac65_t L^\frac43_x} \lesssim C^4 \epsilon^4 c_\lambda^4 \lambda^{1+\frac56-4s}    ,
\end{equation}
respectively
\begin{equation}\label{error-cor-nr}
\|  F^{6,nr}_{\lambda,m}(u) \|_{L^1_{t,x}} \lesssim C^4 \epsilon^4 c_\lambda^4 \lambda^{3-4s}.    
\end{equation}
\end{proposition}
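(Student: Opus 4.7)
The plan is to implement a normal form transformation adapted to the paradifferential flow, exploiting the nonresonant structure of $c^{4,nr}_{\lambda,m}$. Since this symbol is supported where the Schr\"odinger resonance function $\Delta^4\xi^2 := |\xi^1|^2-|\xi^2|^2+|\xi^3|^2-|\xi^4|^2$ satisfies $|\Delta^4\xi^2|\gtrsim \lambda^2$, and since $c^{4,nr}_{\lambda,m}$ itself has size $O(\lambda^2)$, I define the density correction $B^{4,nr}_{\lambda,m}(u)$ as the quadrilinear form with symbol
\[
b^{4,nr}_{\lambda,m}(\xi^1,\xi^2,\xi^3,\xi^4) := \frac{c^{4,nr}_{\lambda,m}(\xi^1,\xi^2,\xi^3,\xi^4)}{i\,\Delta^4\xi^2},
\]
which is a smooth bounded symbol localized at frequency $\lambda$ in all entries. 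The fixed-time bound \eqref{energy-cor-nr} is then immediate from Bernstein's inequality combined with the energy bootstrap \eqref{uk-ee-boot}, since $\|B^{4,nr}_{\lambda,m}(u)\|_{L^1_x}\lesssim \|u_\lambda\|_{L^4_x}^4 \lesssim (\lambda^{1/2}\|u_\lambda\|_{L^\infty_tL^2_x})^4 \lesssim C^4\epsilon^4 c_\lambda^4 \lambda^{2-4s}$.

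The main step is to compute $\partial_t B^{4,nr}_{\lambda,m}(u)$ by differentiating each of the four factors using the paradifferential equation \eqref{full-lpara}, which I rewrite as
\[
i\partial_t u_\lambda = -\Delta u_\lambda - \partial_j[(g^{jk}_{[<\lambda]}-I)\partial_k u_\lambda] + C_\lambda^{res}(u)+C_\lambda^{nr}(u)+F_\lambda(u).
\]
The $-\Delta$ contribution substituted into each of the four slots of $B^{4,nr}_{\lambda,m}$ produces, on the Fourier side, the pointwise-in-$x$ multiplier $(-i)\,\Delta^4\xi^2\cdot b^{4,nr}_{\lambda,m} = -c^{4,nr}_{\lambda,m}$, which reconstructs exactly $-C^{4,nr}_{\lambda,m}(u)$ as a density. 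The metric correction $-\partial_j[(g^{jk}_{[<\lambda]}-I)\partial_k u_\lambda]$ is already in spatial divergence form, so upon substitution into $B^{4,nr}_{\lambda,m}$ the outer $\partial_j$ can be pulled out of the quadrilinear form modulo commutator terms where the derivative is redistributed among other slots; the divergence piece defines $\partial_x Q^{4,nr}_{\lambda,m}(u)$, while the commutators are absorbed into $F^{4,nr}_{\lambda,m}(u)$. Substituting the cubic-and-higher source terms $C_\lambda^{res}$, $C_\lambda^{nr}$, $F_\lambda$ into any one slot produces sextic-or-higher-order expressions in $u$, all of which are placed into $F^{4,nr}_{\lambda,m}(u)$.

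For the flux bound \eqref{flux-nr}, $Q^{4,nr}_{\lambda,m}(u)$ is schematically of the form $\lambda\,(g_{[<\lambda]}-I)\,u_\lambda^4$, which is at least sextic in $u$ since $g-I$ is at least quadratic; interpolating between the $L^4_{t,x}$ Strichartz bootstrap \eqref{uk-Str2-boot} and the energy bootstrap \eqref{uk-ee-boot} yields the required control in $L^{6/5}_tL^{4/3}_x$, using the Sobolev embedding $H^s\subset L^\infty$ (valid in 2D for $s>1$) to dispense with the low-frequency $g-I$ factor. For the error bound \eqref{error-cor-nr}, each sextic term is estimated in $L^1_{t,x}$ by pairing two balanced bilinear $L^2$ factors — using \eqref{uab-bi-bal-boot} in the form $\|u_\lambda^2\|_{L^2_{t,x}}\lesssim C^2\epsilon^2c_\lambda^2\lambda^{-2s}$ — with the Bernstein bound $\|u_\lambda\|_{L^\infty_{t,x}}\lesssim \lambda \|u_\lambda\|_{L^\infty_tL^2_x}$, yielding an upper bound of order $C^4\epsilon^6 c_\lambda^6\lambda^{4-6s}$, which is comfortably below the target $C^4\epsilon^4c_\lambda^4\lambda^{3-4s}$ for $s>1/2$ given the extra $\epsilon^2$ and $c_\lambda^2 \leq 1$ factors. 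The main obstacle will be the careful symbol-level bookkeeping required to cleanly separate density, flux and perturbative contributions from the various non-$\Delta$ substitutions, together with the case analysis over the many possible frequency configurations in the sextic errors, mirroring but extending the exhaustive analysis already carried out in the proof of Proposition~\ref{p:N-lambda}.
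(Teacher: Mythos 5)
Your overall strategy (a quartic normal form computed as if the linear part were the flat Schr\"odinger flow, with the metric and the cubic-and-higher source terms producing errors) is the same as the paper's, but there is a genuine gap at the very first step: the claim that the symbol of $C^{4,nr}_{\lambda,m}$ is supported where $|\Delta^4\xi^2|\gtrsim\lambda^2$, so that $b^{4,nr}_{\lambda,m}=c^{4,nr}_{\lambda,m}/(i\Delta^4\xi^2)$ is a bounded symbol, is false. The nonresonance built into $C^{nr}_\lambda$ is a property of the three input frequencies and of the \emph{trilinear output} $\zeta$ (say $\zeta=\xi^1-\xi^2+\xi^3$ in the phase-rotation-symmetric pattern): it gives $|\Omega_3|\gtrsim\lambda^2$ where $\Omega_3=|\xi^1|^2-|\xi^2|^2+|\xi^3|^2-|\zeta|^2$. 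In the quartic \emph{density} $2\Im\big(C^{nr}_\lambda(\bfu,\bfu,\bfu)\,\bu_\lambda\big)$ the fourth frequency $\xi^4$ is a free variable of size $\lambda$, not tied to $\zeta$, and the quartic modulation is $\Delta^4\xi^2=\Omega_3+\big(|\zeta|^2-|\xi^4|^2\big)$. Since $|\zeta|^2-|\xi^4|^2$ ranges over an interval of length $\sim\lambda^2$, the quartic modulation vanishes on a nontrivial portion of the support (namely where $|\xi^4|^2=|\zeta|^2+\Omega_3$), so your $B^{4,nr}_{\lambda,m}$ is not a bounded quartic form and the cancellation computation, as well as the bound \eqref{energy-cor-nr}, do not go through as written. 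Note that on this degenerate set one necessarily has $|\Delta^4\xi|=|\zeta-\xi^4|\gtrsim\lambda$ (a small spatial output frequency forces $|\zeta|^2-|\xi^4|^2=o(\lambda^2)$ and hence an elliptic modulation), which is exactly what the missing piece of the argument must exploit.

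This is why the statement contains a \emph{quartic} flux correction with the bound \eqref{flux-nr}: the correct construction splits the symbol region. Where $|\Delta^4\xi^2|\gtrsim\lambda^2$ one divides by the modulation and obtains the density correction $B^{4,nr}_{\lambda,m}=L(u_\lambda,u_\lambda,u_\lambda,u_\lambda)$ with bounded symbol; in the complementary region the spatial output frequency is $\sim\lambda$, so one factors it out and writes that part as $\partial_x Q^{4,nr}_{\lambda,m}$ with $Q^{4,nr}_{\lambda,m}=\lambda\,L(u_\lambda,u_\lambda,u_\lambda,u_\lambda)$, whose bound \eqref{flux-nr} is then proved by interpolating the energy bound \eqref{uk-ee-boot} with the balanced bilinear bound \eqref{uab-bi-bal-boot}, exactly as for the resonant part in Proposition~\ref{p:bal-sources}. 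Your construction never produces this quartic flux (your $Q$ consists only of the sextic metric contribution), so \eqref{flux-nr} in the stated form is never addressed. A secondary structural difference: the paper does not put the metric term $\partial_j[(g^{jk}_{[<\lambda]}-I)\partial_k u_\lambda]$ into the flux at all; its substitution into $B$ yields $\lambda^2 L(g_{[<\lambda]}-I,u_\lambda,u_\lambda,u_\lambda,u_\lambda)$, which is estimated directly in $L^1_{t,x}$ as part of $F^{4,nr}_{\lambda,m}$ using two unbalanced bilinear $L^2$ bounds and two energy/Bernstein bounds, avoiding the commutator bookkeeping and the sextic flux estimate you would otherwise have to justify. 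Your treatment of the sextic errors coming from substituting $N_\lambda$ into a slot is in the spirit of the paper and looks fine, but the proof cannot be repaired without adding the modulation/output-frequency splitting and the resulting quartic flux term.
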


\begin{proof}
The corrections are computed as if the linear part were the flat 
Schr\"odinger flow. This yields quartic forms with smooth symbols
\[
B^{4,nr}_{\lambda,m}(u) = L(u_\lambda,u_\lambda,u_\lambda,u_\lambda),
\qquad Q^{4,nr}_{\lambda,m}(u) = \lambda L(u_\lambda,u_\lambda,u_\lambda,u_\lambda).
\]
Then the bound \eqref{energy-cor-nr} follows from the energy estimate and 
Bernstein's inequality, while \eqref{flux-nr} is exactly as in the previous Proposition. It remains to estimate the error term, which has the form
\[
F^{6,nr}_{\lambda,m}(u) = \lambda^2 L(g_{[<\lambda]} - I, u_\lambda,u_\lambda,u_\lambda,u_\lambda) +  L(N_\lambda(u), u_\lambda,u_\lambda,u_\lambda):= F^{6,nr,1}_{\lambda,m}(u)+
F^{6,nr,2}_{\lambda,m}(u).
\]
In the first term we can use two bilinear $L^2_{t,x}$ bounds and two $L^\infty$ 
bounds from energy and Bernstein's inequality to arrive at
\[
\| F^{6,nr,1}_{\lambda,m}(u)\|_{L^1_{t,x}} \lesssim C^6 \epsilon^6 c_\lambda^4 
\lambda^{-4s+n+1},
\]
which suffices.

To obtain a similar estimate for the second term  $F^{6,nr,2}_{\lambda,m}(u)$ we separate $N_\lambda = F_\lambda+ C_\lambda$, 
where for $F_\lambda$ we use Proposition~\ref{p:N-lambda} and two $L^\infty$ bounds from energy and Bernstein's inequality, while for $C_\lambda$ 
we obtain 
\[
\lambda^2 L(u_\lambda,u_\lambda,u_\lambda,u_\lambda,u_\lambda,u_\lambda),
\]
which can be estimated directly by combining \eqref{uk-ee-boot} with 
\eqref{uab-bi-bal-boot} and Bernstein's inequality. This yields a frequency factor $\lambda^{-6s+4}$, which is more than sufficient.

\end{proof}

To summarize our findings, we have the 
modified densities 
\begin{equation}\label{ms}
\ms_\lambda(u) = M_\lambda(u) + B^{4,nr}_{\lambda,m}(u),    
\end{equation}
respectively
\begin{equation}\label{ps}
\ps_\lambda(u) = P_\lambda(u) + B^{4,nr}_{\lambda,p}(u),    
\end{equation}
and the associated density-flux identities
\begin{equation}\label{df2-lo-m}
\partial_t \ms_\lambda(u) = \partial_j  (P^j_{\lambda}(u)+ Q^{4,res,j}_{\lambda,m}+Q^{4,nr,j}_{\lambda,m}) +F^{6,nr}_{\lambda,m}(u)
+F^{4,res}_{\lambda,m}(u) +F^4_{\lambda,m}(u),
\end{equation}
respectively
\begin{equation}\label{df2-lo-p}
\partial_t {\ps}^j_{\lambda}(u) = \partial_k (E^{jk}_{\lambda}(u)
+ Q^{4,res,jk}_{\lambda,p}+Q^{4,nr,jk}_{\lambda,p}) +F^{6,nr,j}_{\lambda,m}(u)
+F^{4,res,j}_{\lambda,p}(u)  +F^{4,j}_{\lambda,p}(u),
\end{equation}
where the flux corrections $Q^{4,res}$,  $Q^{4,nr}$  are as in \eqref{flux-bal}, respectively \eqref{flux-nr},
and all the $F^4$ and $F^6$ terms are estimated in $L^1$ as in \eqref{pert-flux-source}, \eqref{error-cor-res} and \eqref{energy-cor-nr}.

\subsection{The uniform energy bounds}
Here we integrate the density-flux identity \eqref{df2-lo-m} using the bounds in Proposition~\ref{p:bal-sources} and Proposition~\ref{p:nr-sources} in order to prove the dyadic energy estimates \eqref{uk-ee} in our 
main result in Theorem~\ref{t:global2c}, under the appropriate bootstrap assumptions as stated in Proposition~\ref{p:boot}. \blue{ We remark that the mass and momentum corrections 
play a perturbative role at fixed time in this computation,
\begin{equation}\label{B4-pert}
\begin{aligned}
\|   B^{4,nr}_{\lambda,m}(u) \|_{L^1_{t,x}} \lesssim C^4 \epsilon^4 c_\lambda^4 \lambda^{-4s+2}
\lesssim \epsilon^2 c_\lambda^2 \lambda^{-2s},
\\
\|   B^{4,nr}_{\lambda,p}(u) \|_{L^1_{t,x}} \lesssim C^4 \epsilon^4 c_\lambda^4 \lambda^{-4s+3}
\lesssim \epsilon^2 c_\lambda^2 \lambda^{-2s+1},
\end{aligned}
\end{equation}
provided we are above scaling $s \geq 1$.
}

\subsection{ The interaction Morawetz identities}

Here we prove the bilinear $L^2_{t,x}$ bounds \eqref{uab-bi-unbal}
and \eqref{uab-bi-bal}.

Overall, we will seek to pair a frequency $\lambda$ 
portion of one solution $u$ with a frequency $\mu$ portion 
of another solution $v$, which will eventually be taken to be 
$v = u^{x_0}$, i.e. a translate of $u$. The argument follows
the computations done earlier in Section~\ref{s:para},
with the only difference that we need to add in the contribution of our density and flux corrections.

We will write here 
all identities in the general case. Then, in order to 
prove the global result in Theorem~\ref{t:global2c} we will specialize to three cases:
\begin{enumerate}
    \item The diagonal case $\lambda = \mu$, $u=v$ (and thus $x_0=0$).
    \item The balanced shifted case $\lambda \approx \mu$, 
    with  $x_0$ arbitrary.
\item The unbalanced case $ \mu < \lambda$, 
    with  $x_0$ arbitrary.
\end{enumerate}

We define the interaction Morawetz functional for the functions 
$(u,v)$ and the associated pair of frequencies $(\lambda,\mu)$ as
\begin{equation}\label{IM}
\bI_{\lambda\mu}(u,v) :=   \iint a_j(x-y) \left(\ms_\lambda(u)(x) {\ps}^j_{\mu}(v) (y) -  
{\ps}^j_{\lambda}(u)(x) \ms_{\mu}(v) (y)\right) \, dx dy.
\end{equation}
We use the modified density-flux identities for the mass and for the momentum in order to compute the time derivative of $\bI_{\lambda\mu}$ as
\begin{equation}\label{interaction-xilm}
\frac{d}{dt} \bI_{\lambda\mu} =  \bJ^4_{\lambda\mu} + \bJ^{6}_{\lambda\mu} + \bK_{\lambda\mu}. 
\end{equation}
Compared to the computations done in the case of the paradifferential equation, \blue{ 
these terms are mostly similar with two exceptions:
\medskip

(i) the mass and momentum corrections in $\bI_{\lambda\mu}$ whose contributions  are easily controlled using \eqref{B4-pert}.

\medskip
(ii) the expression  $\bJ^{6}_{\lambda \mu}$, which contains the contribution 
of our density and flux corrections, namely it has the form
}
{\small
\[
\begin{aligned}
\bJ^{6}_{\lambda \mu} := & \ \iint a_{jk}(x-y) \left(Q^{4,res,k}_{\lambda,m}(u) +Q^{4,nr,k}_{\lambda,m}(u))(x){\ps}^j_{\mu}(v) (y) -  
(Q^{4,res,jk}_{\lambda,p}(u) +Q^{4,nr,jk}_{\lambda,p}(u))(x) \ms_{\mu}(v) (y) \, \right. 
\\ & \left. - \ms_\lambda(u)(x) (Q^{4,res,jk}_{\mu,p}(v)+ Q^{4,nr,jk}_{\mu,p}(v)) (y) +  
{\ps}^j_{\lambda}(u)(x) (Q^{4,res,k}_{\mu,m}(v) +Q^{4,nr,k}_{\mu,m}(v))(y)\right) \, dx dy.
\end{aligned}
\]
}
To obtain the same outcome as in the paradifferential case, 
it will suffice to estimate the contribution of $\bJ^{6}_{\lambda \mu}$ perturbatively, i.e., to show that 
\begin{equation}\label{J6-bal-want}
 \left| \int_0^T    \bJ^{6}_{\lambda \mu} \, dt \right|
 \lesssim \epsilon^4 c_\lambda^2 c_\mu^2 \lambda^{1-2s}\mu^{-2s}.
\end{equation}
For simplicity we consider one of the second term expressions, which is the worst if $\mu \lesssim \lambda$. Since $|a_{jk}| \leq \dfrac{1}{|x-y|} $, we can use Young's inequality to estimate
\[
 \left| \int_0^T  \iint Q^{4,res,jk}_{\lambda,p}(u)(x) \ms_{\mu}(v) (y)  \, dxdydt \right| \lesssim \| Q^{4,res,jk}_{\lambda,p}(u)\|_{L^\frac65_t L^\frac43_x} \| M_{\mu}(v)\|_{L^6_t L^\frac43_x},
\]
and similarly for $Q^{4,nr,jk}_{\lambda,p}$.
For $\ms_\mu(v)$ we can interpolate the $L^\infty_t L^1_x$ bound provided by \eqref{uk-ee-boot}
and the $L^2_t L^4_x$ bound derived from \eqref{uab-bi-bal-boot} to 
obtain
\[
\| \ms_{\mu}(v)\|_{L^6_t L^\frac43_x} \lesssim C^2 \epsilon^2 c_\mu^2 \mu^{\frac16-2s},
\]
where the quartic correction is better than the leading quadratic term.

Combining this with \eqref{flux-bal} or \eqref{flux-nr} we get 
\[
\lesssim C^6 \epsilon^6 c_\lambda^4 c_\mu^2 \lambda^{2+\frac56-2s}
\mu^{\frac16-2s}
\]
which is better than \eqref{J6-bal-want} provided that $s > 1$. This concludes the proof of \eqref{J6-bal-want}, and thus the proof of  \eqref{uab-bi-bal}
and \eqref{uab-bi-unbal}.

\blue{
\subsection{ The $L^4_{t,x}$ Strichartz bounds}
Here we prove \eqref{uk-Str2}, thereby concluding the proof of Theorem~\ref{t:global2c}.
With minor changes, this is as in the proof of Theorem~\ref{t:para-se}. One difference is that here we have already established the uniform energy bounds for $u_\lambda$. By the interpolation inequality \eqref{interp2}, it remains to show the $V^2_\Delta$ bound with a derivative loss, 
\begin{equation}\label{ul-V2}
\|u_\lambda \|_{V^2_\Delta} \lesssim \epsilon c_\lambda \lambda^{-s+1}    
\end{equation}
For this we 
rewrite the $u_\lambda$ equation as
\begin{equation}\label{para-re-cc+}
(i \partial_t + \Delta) u_\lambda 
= N_\lambda + f_\lambda^1, \qquad u_\lambda(0) = u_{0,\lambda},
\end{equation}
where
\begin{equation}
   f_\lambda^1 :=  \partial_x (g_{[<\lambda]}-I) \partial_x u_\lambda,
\end{equation}
and estimate directly 
\[
\| u_\lambda \|_{V^2_\Delta} \lesssim \| v_\lambda(0)\|_{L^2_x}
+ \| N_\lambda\|_{DV^2_\Delta} 
+ \| f_\lambda^1\|_{DV^2_\Delta},
\]
Here the second term on the right is estimated as in \eqref{Fl-DV2}
while the last term on the right is estimated by duality, testing against a function $w \in U^2_\Delta$,
\[
\left| \int f_\lambda^1 \bar w_\lambda dx dt \right|
\lesssim \| f_\lambda^1 \bar w_\lambda \|_{L^1_{t,x}}  \lesssim \lambda^2 \| L(\bfu_{\ll\lambda}, u_\lambda)\|_{L^2_{t,x}} \| L(\bfu_{\ll \lambda}, w_\lambda)\|_{L^2_{t,x}}
\lesssim C^3 \epsilon^3 \lambda c_\lambda \lambda^{-s+1},
\]
where at the last step we have used two bilinear unbalanced $L^2_{t,x}$ bounds, namely 
one instance of \eqref{uab-bi-unbal-boot} and one application of Corollary~\ref{c:para1}
( repeated in the context of this section, using the modified density flux identities 
for $u_\lambda$).

This concludes the proof of \eqref{ul-V2} and thus the proof of the theorem.

}

 \bibliographystyle{plain}

%\bibliography{1d-global}

%\bibliographystyle{plain}

\end{document}